\newtheorem{theorem}{Theorem}[section]
\newtheorem{prop}{Proposition}[section]
\newtheorem{lem}{Lemma}[section]
\newtheorem{cor}{Corollary}[section]
\newtheorem{remark}{Remark}[section]
\numberwithin{equation}{section}
\def\ep{\epsilon}
\begin{document}

\title{Diagonal  symmetrizers for
hyperbolic operators with triple characteristics}

\author{Tatsuo Nishitani\footnote{Department of Mathematics, Osaka University:  
nishitani@math.sci.osaka-u.ac.jp
}}

\date{}
\maketitle

\def\dif{\partial}
\def\al{\alpha}
\def\be{\beta}
\def\ga{\gamma}
\def\om{\omega}
\def\lam{\lambda}
\def\tika{{\tilde \kappa}}
\def\baka{{\bar \kappa}}
\def\varep{\varepsilon}
\def\tal{{\tilde\alpha}}
\def\tbe{{\tilde\beta}}
\def\tis{{\tilde s}}
\def\bas{{\bar s}}
\def\R{{\mathbb R}}
\def\N{{\mathbb N}}
\def\C{{\mathbb C}}
\def\Q{{\mathbb Q}}
\def\Ga{\Gamma}
\def\La{\Lambda}
\def\lr#1{\langle{#1}\rangle}
\def\mD{\lr{ D}_{\mu}}
\def\xim{\lr{\xi}_{\mu}}

\begin{abstract}
Symmetrizers for hyperbolic operators are obtained by diagonalizing the B\'ezoutian matrix  of the principal symbols. Such diagonal symmetrizers are applied to the Cauchy problem for hyperbolic operators with triple characteristics. In particular, the Ivrii's conjecture concerned with effectively hyperbolic critical points is proved for differential operators with time dependent coefficients, also for third order differential operators with two independent variables with analytic coefficients.
\end{abstract}

\smallskip
 {\footnotesize Keywords: Symmetrizer, B\'ezoutiant, triple characteristic, Tricomi type, Cauchy problem.}
 
 \smallskip
 {\footnotesize Mathematics Subject Classification 2010: Primary 35L30, Secondary 35G10}

\section{Introduction}

This paper is devoted to the Cauchy problem 
\begin{equation}
\label{eq:CPm}
\left\{\begin{array}{ll}
D_t^mu+\sum_{j=0}^{m-1}\sum_{|\alpha|+j\leq m}a_{j,\alpha}(t,x)D_x^{\alpha}D_t^ju=0,\\[8pt]
D_t^ju(0,x)=u_j(x),\quad j=0,\ldots,m-1
\end{array}\right.
\end{equation}
where $t\geq 0$, $x\in\R^n$ and the coefficients $a_{j,\alpha}(t,x)$ are real valued $C^{\infty}$ functions in a neighborhood of the origin of $\R^{1+n}$ and $D_x=(D_{x_1},\ldots,D_{x_n})$, $D_{x_j}=(1/i)(\dif/\dif x_j)$ and $D_t=(1/i)(\dif/\dif t)$. The problem is $C^{\infty}$ well-posed near the origin for $t\geq 0$ if one can find a $\delta>0$ and a neighborhood $U$ of the origin of $\R^n$ such that \eqref{eq:CPm} has a unique solution $u\in C^{\infty}([0,\delta)\times U)$ for any $u_j(x)\in C^{\infty}(\R^n)$. We assume that the principal symbol $p$ is hyperbolic for $t\geq 0$, that is
\[
p(t,x,\tau,\xi)=\tau^m+\sum_{j=0}^{m-1}\sum_{|\alpha|+j=m}a_{j,\alpha}(t,x)\xi^{\alpha}\tau^j
\]
has only real roots in $\tau$ for  $(t,x)\in [0,\delta')\times U'$ and $\xi\in \R^n$ with some $\delta'>0$ and a neighborhood $U'$ of the origin which is necessary in order that the Cauchy problem \eqref{eq:CPm} is $C^{\infty}$ well-posed near the origin for $t\geq 0$ (\cite{Lax}, \cite{Mbook}). 
 
In this paper we are mainly concerned with the case that the multiplicity of the characteristic roots is at most $3$. This implies that it is essential to study  
operators $P$ of the form
\begin{equation}
\label{eq:takata}
P=D_t^3+\sum_{j=1}^3a_j(t,x,D)\lr{D}^jD_t^{3-j}
\end{equation}
which is differential operator in $t$ with coefficients $a_{j}\in S^0$, classical pseudodifferential operator of order $0$, where $\lr{D} = {\rm Op}((1 + |\xi|^2)^{1/2})$. One can assume that $a_1(t,x,D)=0$ without loss of generality  and hence the principal symbol  has the form
\[
p(t, x, \tau, \xi) = \tau^3 -a(t, x, \xi)|\xi|^2\tau
- b(t, x, \xi)|\xi|^3. 
\]
With $U={^t}(D_t^2u,\lr{D}D_tu,\lr{D}^2u)$ the equation $P u=f$ is reduced to 
\begin{equation}
\label{eq:redE}
D_tU=A(t,x,D)\lr{D}U+B(t,x,D)U+F
\end{equation}
where $A, B\in S^0$, $F={^t}(f,0,0)$ and
\[
 A(t,x,\xi)=
\begin{bmatrix}0&a&b\\
1&0 &0\\
0&1&0
\end{bmatrix}.
\]
Let $S$ be  the B\'ezoutiant of $p$ and $\dif p/\dif \tau$, that is
\begin{equation}
\label{eq:defS}
S(t,x,\xi)=
\begin{bmatrix}3&0&-a\\
0&2a&3b\\
-a&3b&a^2
\end{bmatrix}
\end{equation}
then $S$ is nonnegative definite and symmetrizes $A$, that is $SA$ is symmetric which is easily examined directly, though this is a special case of a general fact (see \cite{Ja2}, \cite{N4}).  Then one of the most important works would be to obtain lower bound of $({\rm Op}(S)U,U)$. The sharp G\aa rding inequality (\cite{LaNi}, \cite{Hobook}) gives a lower bound
\[
{\mathsf{Re\,}}({\rm Op}(S)U,U)\geq -C\|\lr{D}^{-1/2}U\|^2
\]
which is, in general,  too weak to study the Cauchy problem for general weakly hyperbolic operator $P$, in particular the well posed Cauchy problem with loss of derivatives, although  
applying this symmetrizer many interesting results are obtained by several authors, see for example \cite{JaTa}, \cite{DaSpa}, \cite{KiSpa}, \cite{GaR:1}, \cite{GaR:2},  \cite{NP}. In these works  one of the main points is  how one can derive a suitable lower bound of ${\rm Op}(S)$ from the hyperbolicity condition assumed on $p$, that is
\begin{equation}
\label{eq:hyp:cond}
\Delta=4\,a(t,x,\xi)^3-27\,b(t,x,\xi)^2\geq 0, \quad (t,x,\xi)\in [0,T)\times U\times \R^n.
\end{equation}

In this paper we employ a new idea which is to diagonalize $S$ by an orthogonal matrix $T$  so that $T^{-1}ST=\Lambda={\rm diag}\,(\lambda_1,\lambda_2,\lambda_3)$  where $0\leq \lambda_1\leq \lambda_2\leq \lambda_3$ are the eigenvalues of $S$ and reduce the equation to that of $V=T^{-1}U$; roughly
\begin{equation}
\label{eq:daitai}
D_tV=A^T\lr{D}V+B^TV,\quad A^T=T^{-1}AT
\end{equation}
where $\Lambda$  symmetrizes $A^T$. For general nonnegative definite symmetric $S$ 
it seems that we have nothing new but our $S$ is a special one which is the B\'ezoutiant  of hyperbolic polynomial $p$ and $\dif p/\dif \tau$. Indeed, as we will see in Section \ref{sec:DiaSym}, one has
\[
\frac{\Delta}{a}\preceq \lambda_1\preceq a^2,\quad \lambda_2\simeq a,\quad \lambda_3\simeq 1.
\]
Since \eqref{eq:daitai} is a symmetrizable system with a diagonal symmetrizer $\Lambda$, a natural energy will be
\[
{\mathsf {Re}}\,\big({\rm Op}(\Lambda)U,U\big)=\sum_{j=1}^3\big({\rm Op}(\lambda_j)U_j,U_j\big)
\]
and it could be expected that  scalar operators ${\rm Op}(\lambda_j)$ reflect the hyperbolicity condition \eqref{eq:hyp:cond} quite directly.

If $p=0$ has a triple characteristic root $\tau=0$ at $(0,x,\xi)$ such that $(0,x,\tau,\xi)$ is effectively hyperbolic (see Section \ref{sec:app:tdep})  then one sees that $\dif_ta(0,x,\xi)>0$ and hence  $\dif_t^3\Delta(0,x,\xi)>0$, which follows from \eqref{eq:hyp:cond}, so that essentially $a$ and $\Delta$ are polynomials in $t$ of degree $1$ and $3$ respectively. Then we see that $\Delta/a\preceq\lambda_1$ behaves like a second order polynomial in $t$ which is nonnegative for $t\geq 0$. Finding a finite number of functions $\phi_j$ such that $\dif_t\phi_j>0$ and $\phi_j^2\preceq \Delta/a$  we estimate the weighted energy ${\mathsf {Re}}\,\big({\rm Op}(\phi_j^N\Lambda)U,U\big)$ with a suitable $N\in\R$. In Section \ref{sec:app:tdep}  this procedure is carried out for operators of order $m$ with effectively hyperbolic critical points with time dependent coefficients, and it is proved that the Cauchy problem is $C^{\infty}$ well-posed for any lower order term. In Section \ref{sec:app:xdep} the same assertion is proved for third order operators with two independent variables with analytic coefficients, so that Ivrii's conjecture is proved for these operators.

In Section \ref{sec:enehyoka:1}, admitting the existence of such a weight function, we explain how to derive energy estimates.  To do so we need to estimate the derivatives of $\Lambda$ and $A^T$, essentially those of $\lambda_j$, which  is done in Section \ref{sec:DiaSym}. 
 
In the last section we show that the same idea is applicable to  hyperbolic operators  with more general triple characteristics, utilizing a  homogeneous third order operator with two independent variables.

\section{Daiagonal symmetrizers }
\label{sec:DiaSym}

Consider 
\[
p(\tau,t,X)=\tau^3-a(t,X)\tau-b(t,X)
\]
where $a(t,X)$ and $b(t,X)$ are real valued and $C^{\infty}$ in $(t,X)\in (-c,T)\times W$ with bounded derivatives of all order where $W$ is an open set in $ \R^l$ such that ${\bar X}\in W$ and $c>0$ is some positive constant. Assume 
\begin{equation}
\label{eq:cond:1}
\Delta(t,X)=4\,a(t,X)^3-27\,b(t,X)^2\geq 0,\; (t,X)\in [0,T)\times W, \;\;a(0,\bar{X})=0
\end{equation}
that is, $p(\tau,t,X)=0$ has only real roots for $(t,X)\in [0,T)\times W$ and  has a triple root $\tau=0$ at $(0,{\bar X})$. Moreover  assume that there is  no triple root in $t>0$;
\begin{equation}
\label{eq:eq:cond:1bis}
a(t,X)>0, \quad (t,X)\in (0,T)\times W.
\end{equation}
 Denote
\begin{equation}
\label{eq:esu}
S(t,X)=
\begin{bmatrix}3&0&-a\\
0&2\,a&3\,b\\
-a&3\,b&\,a^2
\end{bmatrix},\qquad A(t,X)=
\begin{bmatrix}0&a&b\\
1&0&0\\
0&1&0
\end{bmatrix}
\end{equation}
then $S$ is nonnegative definite and $S(t,X)A(t,X)$ is symmetric. Let
\[
0\leq \lambda_1(t,X)\leq \lambda_2(t,X)\leq \lambda_3(t,X)
\]
be the eigenvalues of $S(t,X)$. 

\subsection{Behavior of eigenvalues}

We show
\begin{prop}
\label{pro:Skon}There exist a neighborhood ${\mathcal U}$ of $(0,\bar{X})$ and $K>0$ such that
\begin{align}
\begin{split}\label{eq:kon:1}
\Delta/(6a+2a^2+2a^3)\leq \lambda_1\leq \big(2/3+Ka\big)\,a^2,
\end{split}\\
\begin{split}\label{eq:kon:2}
(2-Ka)\,a\leq \lambda_2\leq (2+Ka)\,a,
\end{split}\\
\begin{split}\label{eq:kon:3}
3\leq \lambda_3\leq 3+Ka^2
\end{split}
\end{align}
for $(t,X)\in {\mathcal U}\cap \{t>0\}$.
\end{prop}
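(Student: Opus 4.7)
My approach will read off the eigenvalues of $S$ from three symmetric-function identities
\[
\mathrm{tr}\,S = 3+2a+a^{2}, \qquad e_{2}(S) = 6a+2a^{2}+2a^{3}-9b^{2}, \qquad \det S = \Delta,
\]
combined with the Cauchy interlacing theorem applied to the top-left $2\times 2$ principal submatrix of $S$, which is the diagonal matrix with entries $3$ and $2a$. Because this block is already diagonal, its eigenvalues are simply $2a$ and $3$ once $a<3/2$, and interlacing hands us the first two lower bounds essentially for free.

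Cauchy interlacing for a $3\times 3$ symmetric matrix and its $2\times 2$ principal submatrix gives $\lambda_{1}\leq 2a\leq \lambda_{2}\leq 3\leq \lambda_{3}$ for small $a$. In particular
\[
\lambda_{2}\geq 2a,\qquad \lambda_{3}\geq 3,
\]
which are the left halves of \eqref{eq:kon:2} and \eqref{eq:kon:3} with room to spare. Combining these with $\mathrm{tr}\,S=\lambda_{1}+\lambda_{2}+\lambda_{3}$ yields the matching upper bounds $\lambda_{3}\leq 3+a^{2}$, $\lambda_{2}\leq 2a+a^{2}$, and $\lambda_{1}\leq a^{2}$, giving the right halves of \eqref{eq:kon:2} and \eqref{eq:kon:3} with $K=1$.

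The bounds on $\lambda_{1}$ in \eqref{eq:kon:1} I would then get from the product and sum-of-pairs identities. For the upper bound, the product identity $\lambda_{1}\lambda_{2}\lambda_{3}=\Delta$ together with $\lambda_{2}\lambda_{3}\geq 6a$ and $\Delta\leq 4a^{3}$ yields
\[
\lambda_{1}=\frac{\Delta}{\lambda_{2}\lambda_{3}}\leq \frac{4a^{3}}{6a}=\frac{2}{3}a^{2},
\]
which is even sharper than $(2/3+Ka)a^{2}$. For the lower bound, the nonnegativity $\lambda_{1}(\lambda_{2}+\lambda_{3})\geq 0$ rearranges the $e_{2}$-identity to $\lambda_{2}\lambda_{3}\leq e_{2}(S)\leq 6a+2a^{2}+2a^{3}$, whence $\lambda_{1}=\Delta/(\lambda_{2}\lambda_{3})\geq \Delta/(6a+2a^{2}+2a^{3})$.

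The argument is short precisely because the eigenvalues of the top-left $2\times 2$ block of $S$ happen to be the correct leading terms of $\lambda_{2}$ and $\lambda_{3}$. I do not foresee a real obstacle: it is only necessary to shrink the neighborhood $\mathcal{U}$ so that $a<3/2$, which is automatic since $a(0,\bar{X})=0$. The hyperbolicity condition \eqref{eq:cond:1} enters only implicitly, through the nonnegativity of $\Delta$ and of the eigenvalues $\lambda_{j}$ (equivalently, the positive semidefiniteness of the B\'ezoutiant $S$).
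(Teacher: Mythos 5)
Your proposal is correct, and it takes a genuinely different and cleaner route than the paper. The paper works with the characteristic polynomial $q(\lambda)=\det(\lambda I-S)$: it first locates the critical points $\mu_1,\mu_2$ of $q$ (which interlace the $\lambda_j$), uses a tangent-line argument at $\lambda=0$ to get the lower bound on $\lambda_1$, and then obtains the remaining five inequalities by evaluating $q$ at the trial points $\delta a^2$, $(2\pm Ka)a$, $3$, and $3+Ka^2$ and reading off signs, each of which requires a short expansion. You instead apply Cauchy interlacing to the already-diagonal top-left $2\times 2$ principal submatrix $\mathrm{diag}(3,2a)$, which immediately yields
\[
\lambda_1\leq 2a\leq\lambda_2\leq 3\leq\lambda_3\qquad\text{(valid once }a<3/2\text{),}
\]
and then the trace identity $\lambda_1+\lambda_2+\lambda_3=3+2a+a^2$ together with $\lambda_1\geq 0$ delivers all three upper bounds in one line, with $K=1$. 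This replaces four separate trial-point computations by a single well-known matrix inequality. Your bounds are in fact slightly sharper: $\lambda_2\geq 2a$ rather than $(2-Ka)a$, and $\lambda_1\leq\tfrac{2}{3}a^2$ rather than $(\tfrac{2}{3}+Ka)a^2$. The lower bound on $\lambda_1$ ends up identical in both proofs — it is $\lambda_1\geq\Delta/e_2(S)$ combined with $e_2(S)=q_\lambda(0)\leq 6a+2a^2+2a^3$ (which is just $9b^2\geq 0$) — but you obtain the first inequality algebraically from $\lambda_2\lambda_3\leq e_2(S)$ and $\lambda_1\lambda_2\lambda_3=\Delta$, whereas the paper derives it from the tangent line of $q$ at the origin. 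One small point worth making explicit in a write-up: the divisions by $\lambda_2\lambda_3$ are legitimate on $\mathcal{U}\cap\{t>0\}$ because $a(t,X)>0$ there by \eqref{eq:eq:cond:1bis}, hence $\lambda_2\geq 2a>0$ and $\lambda_3\geq 3>0$.
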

\begin{cor}
\label{cor:konname}There exists a neighborhood ${\mathcal U}$ of $(0,\bar{X})$ such that
\[
\lambda_i(t,X)\in C^{\infty}({\mathcal U}\cap \{t>0\}),\quad i=1,2,3.
\]
\end{cor}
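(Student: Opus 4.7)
The plan is to invoke the standard fact that simple eigenvalues of a symmetric matrix depending smoothly on parameters are themselves smooth functions of those parameters (this follows from the implicit function theorem applied to the characteristic polynomial, or equivalently from analytic perturbation theory for self-adjoint matrices in the non-crossing regime). Since $S(t,X)$ has $C^{\infty}$ entries on the full set $(-c,T)\times W$, the corollary reduces to showing that $\lambda_1,\lambda_2,\lambda_3$ are pairwise distinct on $\mathcal{U}\cap\{t>0\}$ for some neighborhood $\mathcal{U}$ of $(0,\bar X)$.

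To separate the eigenvalues, I would combine Proposition~\ref{pro:Skon} with the hypothesis \eqref{eq:eq:cond:1bis}. On $\mathcal{U}\cap\{t>0\}$ we have $a(t,X)>0$, and by continuity of $a$ together with $a(0,\bar X)=0$, we may shrink $\mathcal{U}$ so that $a(t,X)$ is as small as we wish. From \eqref{eq:kon:1}--\eqref{eq:kon:3},
\[
\lambda_1\leq \bigl(2/3+Ka\bigr)a^2,\qquad \lambda_2\geq (2-Ka)\,a,\qquad \lambda_3\geq 3.
\]
For $a$ small enough one has $(2/3+Ka)a^2<(2-Ka)a$, hence $\lambda_1<\lambda_2$; similarly $(2+Ka)a<3$, hence $\lambda_2<\lambda_3$. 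Thus on a suitably shrunken $\mathcal{U}\cap\{t>0\}$ all three eigenvalues are simple.

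Once the eigenvalues are simple, smoothness follows routinely. One way is to apply the implicit function theorem to $P(\lambda,t,X):=\det(\lambda I-S(t,X))$: at any point where $\lambda_i$ is a simple root, $\partial_{\lambda}P(\lambda_i,t,X)=\prod_{j\ne i}(\lambda_i-\lambda_j)\ne 0$, so $\lambda_i$ is locally a $C^{\infty}$ function of $(t,X)$. Patching these local expressions together using the (global) ordering $\lambda_1\leq\lambda_2\leq\lambda_3$ on $\mathcal{U}\cap\{t>0\}$ yields three $C^{\infty}$ functions agreeing with the ordered eigenvalues.

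There is no real obstacle beyond verifying the separation; the only care required is to ensure that the neighborhood chosen is small enough for the inequalities above to hold uniformly, and that the hypothesis \eqref{eq:eq:cond:1bis} guarantees $a>0$ (and hence the separation estimates of Proposition~\ref{pro:Skon} are applicable) on the entire set $\mathcal{U}\cap\{t>0\}$.
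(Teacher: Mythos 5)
Your proof is correct and takes essentially the same route as the paper: use Proposition~\ref{pro:Skon} (with $a(0,\bar X)=0$ and $a>0$ for $t>0$) to shrink $\mathcal{U}$ so that $\lambda_1<\lambda_2<\lambda_3$ on $\mathcal{U}\cap\{t>0\}$, then conclude smoothness from the implicit function theorem applied to the characteristic polynomial. You have merely spelled out the separation inequalities and the non-vanishing of $\partial_\lambda P$ that the paper leaves implicit.
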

\begin{proof}Recalling $a(0,\bar{X})=0$ from Proposition \ref{pro:Skon} one can choose ${\mathcal U}$ such that
\[
 \lambda_1<\lambda_2<\lambda_3\quad \text{in}\;\;{\mathcal U}\cap\{t>0\}
\]
then the assertion follows immediately from the Implicit function theorem.
\end{proof}
\begin{remark}
\label{rem:jyukon}\rm It may happen $\Delta(t,X)=0$ for $t>0$ so that $p(\tau,t,X)=0$  has a double root $\tau$ at $(t,X)$ while $\lambda_i(t,X)$ are smooth there.
\end{remark}

\noindent
Proof of Proposition \ref{pro:Skon}: Denote $q(\lambda)={\rm det}\,(\lambda I-S)$;
\begin{equation}
\label{eq:qkata}
q(\lambda)=\lambda^3-(3+2a+a^2)\lambda^2+(6a+2a^2+2a^3-9b^2)\lambda-\Delta.
\end{equation}
Let $\mu_1\leq\mu_2$ be the roots of $q_{\lambda}=\partial q/\partial \lambda=0$ and hence
\[
\lambda_1\leq \mu_1\leq \lambda_2\leq \mu_2\leq \lambda_3.
\]
It is easy to see $\mu_1=a(1+O(a))$ and $ \mu_2=2+O(a)$ which gives
\begin{equation}
\label{eq:kon3}
\lambda_1\leq a(1+O(a)),\quad \lambda_3\geq 2+O(a).
\end{equation}
In the $(\lambda,\eta)$ plane, the tangent line of the curve $\eta=q(\lambda)$ at $(0,q(0))$ intersects with $\lambda$ axis at $(\Delta/q_{\lambda}(0),0)$ and hence
\[
\lambda_1(t,X)\geq \Delta/q_{\lambda}(0).
\]
Since $q_{\lambda}(0)\leq 6a+2a^2+2a^3$ the left inequality of \eqref{eq:kon:1} is obvious. Compute $q(\delta a^2)$ with $\delta>0$. Since $2a^3-9b^2=\Delta/2+9b^2/2\geq 0$  one has
\begin{align*}
q(\delta a^2)
\geq \delta^3a^6-\delta^2a^4(3+2a+a^2)+\delta a^2(6a+2a^2)-4a^3+27b^2\\
\geq a^3\Big\{(6\delta-4)+\delta(2-3\delta)a-2\delta^2a^2+\delta^2(\delta-1)a^3\Big\}.
\end{align*}
Here we take $\delta=2/3+Ka$ then noting $a(0,\bar{X})=0$ one can choose a neighborhood ${\mathcal U}$ of $(0,\bar{X})$ such that
\[
q(\delta a^2)\geq a^4\big\{K-3K\delta a-2\delta^2 a+\delta^2(\delta-1)a^2\big\}>0
\]
for $(t,X)\in {\mathcal U}\cap\{t>0\}$. This proves that $\lambda_1\leq \delta a^2$ and hence the right inequality of \eqref{eq:kon:1}. Turn to $\lambda_2$. Consider $q(\delta a)$ with $\delta>0$ again. Note
\begin{align*}
q(\delta a)\geq a^2\Big\{\delta^3a-\delta^2(3+2a+a^2)+\delta(6+2a)-4a\Big\}+27b^2\\
\geq a^2\Big\{3\delta(2-\delta)+(\delta^3-2\delta^2+2\delta-4)
a-\delta^2
a^2\Big\}
\end{align*}
and choose $\delta=2-Ka$ which gives
\[
q(\delta a)\geq a^3\Big\{6K-(3K^2+2K+K\delta^2-\delta^2)a\Big\}.
\]
Therefore for any $K>0$ one can find ${\mathcal U}$ such that $q(\delta a)>0$ in ${\mathcal U}\cap\{t>0\}$. Since one can assume $\delta a<\lambda_3$ by \eqref{eq:kon3} then $\delta a\in (\lambda_1,\lambda_2)$ which proves the left inequality of \eqref{eq:kon:2}. Repeating similar arguments one gets
\begin{align*}
q(\delta a)\leq a^2\Big\{3\delta(2-\delta)
+(4+2\delta+\delta^3-2\delta^2)
a+\delta(2-\delta)a^2\Big\}
\end{align*}
because $27b^2\leq 4 a^3$ in $t\geq 0$. Taking $\delta=2+Ka$ one has
\[
q(\delta a)\leq a^3\Big\{(
8-6K)+(2K+\delta^2 K-\delta Ka-3K^2)
a\Big\}.
\]
Fixing any $K>4/3$ one can find ${\mathcal U}$ such that $q(\delta a)<0$ in ${\mathcal U}\cap\{t>0\}$. Since $\lambda_1<\delta a$ thanks to \eqref{eq:kon:1} one concludes $(2+Ka)a\in (\lambda_2,\lambda_3)$ which shows the right inequality of \eqref{eq:kon:2}. Finally we check \eqref{eq:kon:3}. It is easy to see that
\[
q(3)=a^2(-3+2a)<0
\]
in ${\mathcal U}\cap\{t>0\}$ if ${\mathcal U}$ is small 
so that $3\leq \lambda_3$ in ${\mathcal U}\cap\{t>0\}$. Note that
\begin{align*}
q(\delta)\geq \delta\Big\{\delta(\delta-3)+(6-2\delta)a+(2-\delta^2)a^2\Big\}-4a^3
\end{align*}
where we take $\delta=3+Ka^2$ so that
\begin{align*}
q(3+Ka^2)=a^2\Big\{3(3K-1)-(6K+4)a+3(K^2-k)a^2\Big\}\\
+Ka^4\Big\{(3K-1)-2Ka+(K^2-K)a^2\Big\}.
\end{align*}
Thus fixing any $K>1/3$ one can find ${\mathcal U}$ such that $q(3+Ka^2)>0$ in ${\mathcal U}\cap\{t>0\}$. Since $3+Ka^2>\lambda_2$ which proves the right inequality of \eqref{eq:kon:3}.
\qed
%
\subsection{Behavior of eigenvectors}
\label{sec:defT}

If we write $n_{ij}$ for the $(i,j)$-cofactor of $\lambda_kI-S$ then $^t(n_{j1},n_{j2},n_{j3})$ is, if non-trivial, an eigenvector corresponding to $\lambda_k$. We take $k=1$, $j=3$ and hence
\[
\begin{bmatrix}a(2\,a-\lambda_1)\\
3\,b(\lambda_1-3)\\
(\lambda_1-3)(\lambda_1-2\,a)
\end{bmatrix}=\begin{bmatrix}\ell_{11}\\
\ell_{21}\\
\ell_{31}
\end{bmatrix}
\]
is an eigenvector corresponding to $\lambda_1$ and therefore
\[
{\bf t}_1=\begin{bmatrix}t_{11}\\
t_{21}\\
t_{31}
\end{bmatrix}=\frac{1}{d_1}\begin{bmatrix}\ell_{11}\\
\ell_{21}\\
\ell_{31}
\end{bmatrix},\quad d_1=\sqrt{\ell_{11}^2+\ell_{21}^2+\ell_{31}^2}
\]
is a normalized eigenvector corresponding to $\lambda_1$. Thanks to Proposition \ref{pro:Skon} and $b=O(a^{3/2})$ it is clear that there is $C>0$ such that
\begin{equation}
\label{eq:dno:1}
a/C\leq d_1\leq C\,a,\quad \text{in}\quad {\mathcal U}\cap\{t>0\}.
\end{equation}
Similarly choosing $k=2, j=2$ and $k=3, j=1$ 
\[
\begin{bmatrix}-3\,ab\\
(\lambda_2-3)(\lambda_2-a^2)-a^2\\
3\,b(\lambda_2-3)
\end{bmatrix}=\begin{bmatrix}\ell_{12}\\
\ell_{22}\\
\ell_{32}
\end{bmatrix},\quad \begin{bmatrix}(\lambda_3-2a)(\lambda_3-a^2)-9b^2\\
-3ab\\
-a(\lambda_3-2a)
\end{bmatrix}=\begin{bmatrix}\ell_{13}\\
\ell_{23}\\
\ell_{33}
\end{bmatrix}
\]
are eigenvectors corresponding to $\lambda_2$ and $\lambda_3$ respectively and
\[
{\bf t}_j=\begin{bmatrix}t_{1j}\\
t_{2j}\\
t_{3j}
\end{bmatrix}=\frac{1}{d_j}\begin{bmatrix}\ell_{1j}\\
\ell_{2j}\\
\ell_{3j}
\end{bmatrix},\quad d_j=\sqrt{\ell_{1j}^2+\ell_{2j}^2+\ell_{3j}^2}
\]
are normalized eigenvectors corresponding to $\lambda_j$, $j=2,3$. Thanks to Proposition \ref{pro:Skon} there is $C>0$ such that
\begin{equation}
\label{eq:dno:23}
a/C\leq d_2\leq C\,a,\quad 1/C\leq d_3\leq C.
\end{equation}
Denote $T=({\bf t}_1,{\bf t}_2, {\bf t}_3)=(t_{ij})$ then $T$ is an orthogonal matrix, ${^t}TT=I$, smooth in $(t,X)\in {\mathcal U}\cap\{t>0\}$ which diagonalizes $S$;
\[
\Lambda=T^{-1}ST={^t}TST=\begin{bmatrix}\lambda_1&0&0\\
0&\lambda_2&0\\
0&0&\lambda_3
\end{bmatrix}.
\]
Note that $\Lambda$ symmetrizes $A^T=T^{-1}AT$;
\[
^{t}(\Lambda A^T)=^{t}\!\!(\,^{t}TSAT)=^{t}\!\!T\,^{t}\!(SA)T={^t}TSAT=\Lambda A^T.
\]
Denote $A^T=({\tilde a}_{ij})$. Since $\Lambda A^T$ is symmetric ${\tilde a}_{ij}$ satisfies
\[
{\tilde a}_{21}=\frac{\lambda_1{\tilde a}_{12}}{\lambda_2},\quad {\tilde a}_{31}=\frac{\lambda_1{\tilde a}_{13}}{\lambda_3},\quad {\tilde a}_{32}=\frac{\lambda_2{\tilde a}_{23}}{\lambda_3}
\]
which shows that ${\tilde a}_{21}=O(a^{-1}\lambda_1){\tilde a}_{12}$, ${\tilde a}_{31}=O(\lambda_1){\tilde a}_{13}$ and ${\tilde a}_{32}=O(a){\tilde a}_{23}$.
%
%
%
Finally in view of \eqref{eq:dno:1}, \eqref{eq:dno:23} and Proposition \ref{pro:Skon} it is easy to check that
\begin{equation}
\label{eq:ordT}
T=\big({\bf t}_1,{\bf t}_2,{\bf t}_3\big)=\begin{bmatrix}O(a)&O(a^{3/2})&O(1)\\
O(\sqrt{a})&O(1)&O(a^{5/2})\\
O(1)&O(\sqrt{a})&O(a)
\end{bmatrix}
\end{equation}
near $(t,X)=(0,{\bar X})$.

\subsection{Smoothness  of eigenvalues}

First recall \cite[Lemma 3.2]{NP}
\begin{lem}
\label{lem:NiP}Assume \eqref{eq:cond:1}. Then
\begin{align*}
|\partial_X^{\alpha}a|\preceq \sqrt{a},\quad |\partial_X^{\alpha}b|\preceq a,\quad |\partial_tb|\preceq \sqrt{a}
\end{align*}
for $|\alpha|=1$ and $(t,X)\in (0,T)\times W$.
\end{lem}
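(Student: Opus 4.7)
My plan is to combine Glaeser's inequality for smooth nonnegative functions with a symmetric third-order Taylor expansion, in the spirit of the Landau--Kolmogorov interpolation inequality, together with the structural bound $|b|\preceq a^{3/2}$ that comes directly from the discriminant condition.

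First, for $|\partial_X^\alpha a|\preceq\sqrt{a}$: at any interior point $(t,X)\in(0,T)\times W$, the $X$-variable admits a symmetric open neighborhood inside $W$ on which $a\geq 0$. Glaeser's inequality (if $f\geq 0$ is $C^2$ with bounded second derivatives then $|\nabla f|^2\leq 2\|\nabla^2 f\|_\infty\,f$) applied coordinate-wise to $a$ then gives $|\partial_{X_j}a|^2\leq 2\|\partial_{X_j}^2 a\|_\infty\,a$. The hyperbolicity $4a^3-27b^2\geq 0$ directly yields $|b|\leq \tfrac{2}{3\sqrt{3}}\,a^{3/2}$.

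For $|\partial_{X_j}b|\preceq a$, fix $(t_0,X_0)$, set $a_0=a(t_0,X_0)$, and consider the symmetric $X$-interval $I=[X_0-he_j,X_0+he_j]$ of radius $h=c\sqrt{a_0}$. By the Glaeser bound on $\partial_{X_j}a$, $a$ varies by at most $c'a_0$ on $I$, so $a$ stays comparable to $a_0$ and hence $|b|\leq Ca_0^{3/2}$ on $I$. Subtracting the third-order Taylor expansions of $b$ at $X_0\pm he_j$ gives
\[
2h\,|\partial_{X_j}b(t_0,X_0)|\leq |b(X_0+he_j)|+|b(X_0-he_j)|+\tfrac{h^3}{3}\|\partial_{X_j}^3 b\|_\infty\leq Ca_0^{3/2}+C'h^3,
\]
and dividing by $2h=2c\sqrt{a_0}$ yields $|\partial_{X_j}b|\leq C''a_0$, which is the desired bound.

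For $|\partial_t b|\preceq\sqrt{a}$, apply the same symmetric Taylor argument in the $t$-direction on an interval $[t_0-h,t_0+h]\subset[0,T)$, which forces $h\leq t_0$. Since there is no Glaeser-type estimate for $\partial_t a$ near $t=0$, $a$ only varies by $\lesssim h$ (not $h\sqrt{a_0}$) on the interval; choosing $h=\min(a_0,t_0)$ and using $|b|\leq C(a_0+Ch)^{3/2}$, the resulting interpolation estimate closes at the $\sqrt{a_0}$ scale after optimization, in both regimes $t_0\geq a_0$ and $t_0<a_0$ (in the latter regime one also uses smoothness at $t=0$ together with the fact that $\partial_t b(0,X)=0$ at zeros of $a(0,X)$). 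The main obstacle is precisely this case split in the $t$-variable, forced by the one-sided nature of $[0,T)$: the $X$-direction enjoys the self-consistent Glaeser scale $h=\sqrt{a}$, which is why the stronger bound $|\partial_X b|\preceq a$ comes out cleanly, whereas in $t$ one has to settle for the weaker $|\partial_t b|\preceq\sqrt{a}$.
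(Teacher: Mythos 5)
The paper does not prove this lemma itself; it is imported verbatim as \cite[Lemma 3.2]{NP}, so there is no in-paper proof to compare against. I can only evaluate your argument on its own terms.

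Your treatment of the first two estimates is correct. Glaeser's inequality in the $X$-variables (available because $W$ is open, $a\geq 0$ on $[0,T)\times W$, and the $X$-derivatives of $a$ are uniformly bounded) gives $|\partial_X a|\preceq \sqrt a$. The bound $|b|\leq \tfrac{2}{3\sqrt3}a^{3/2}$ is immediate from $27b^2\leq 4a^3$. Your symmetric third-order difference at scale $h=c\sqrt{a_0}$ is then exactly right: the Glaeser bound makes $\sqrt a$ Lipschitz, so $a\asymp a_0$ on the interval, $|b|\preceq a_0^{3/2}$ there, and $2h|\partial_{X_j}b|\leq 2\sup|b|+\tfrac{h^3}{3}\|\partial_{X_j}^3b\|$ gives $|\partial_{X_j}b|\preceq a_0$. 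The remark that the Glaeser scale $h\simeq\sqrt a$ is self-consistent in $X$ but unavailable in $t$ is exactly the right observation.

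The $\partial_t b$ argument, however, has a genuine gap precisely in the regime you flag as problematic, $0<t_0<a_0$, and the suggested patch does not close it. Your fallback is ``$\partial_t b(0,X)=0$ at zeros of $a(0,X)$'' — this is a correct pointwise fact, but in the regime $t_0<a_0$ with $a_0$ small one has $a(0,X_0)\asymp a_0>0$, so $X_0$ is \emph{not} a zero of $a(0,\cdot)$, and there may be no zero of $a(0,\cdot)$ within distance $O(\sqrt{a_0})$ of $X_0$ to propagate from. So the claim as written gives nothing at the relevant points, and the symmetric Taylor step with $h=t_0$ leaves an uncontrolled term $a_0^{3/2}/t_0$, which is not $\preceq\sqrt{a_0}$ when $t_0\ll a_0$.

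The fix is to drop the symmetric expansion entirely and expand forward in $t$ only. Since you want the weaker target $\sqrt a$ (not $a$, as in the $X$-direction), a one-sided second-order Taylor expansion suffices and removes the constraint $h\leq t_0$. Take $h=a_0$ and expand on $[t_0,t_0+h]\subset[0,T)$ (shrink $T$ so that $a_0<T-t_0$ throughout the relevant neighborhood). Since $\partial_t a$ is bounded, $a\leq (1+C)a_0$ on $[t_0,t_0+h]$ and hence $|b|\preceq a_0^{3/2}$ there; then
\[
h\,|\partial_t b(t_0,X_0)|\leq |b(t_0+h)|+|b(t_0)|+\tfrac{h^2}{2}\|\partial_t^2 b\|_\infty \preceq a_0^{3/2}+a_0^2,
\]
and dividing by $h=a_0$ gives $|\partial_t b(t_0,X_0)|\preceq \sqrt{a_0}+a_0\preceq\sqrt{a_0}$. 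This holds uniformly whether $t_0\geq a_0$ or $t_0<a_0$, so no case split is needed; the one-sided error term $O(h)=O(a_0)\leq O(\sqrt{a_0})$ is acceptable precisely because the target exponent is $1/2$ rather than $1$.
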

We show
\begin{lem}
\label{lem:bibun} For $|\alpha|=1$ one has
\begin{equation}
\label{eq:bibun:a}
|\partial_X^{\alpha}\lambda_1|\preceq a^{3/2},\quad |\partial_X^{\alpha}\lambda_2|\preceq \sqrt{a},\quad |\partial_X^{\alpha}\lambda_1|\preceq \sqrt{a}.
\end{equation}
\end{lem}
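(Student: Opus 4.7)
The plan is to use the implicit function theorem: since Corollary \ref{cor:konname} gives $\lambda_1<\lambda_2<\lambda_3$ on ${\mathcal U}\cap\{t>0\}$, each $\lambda_i$ is a simple root of the characteristic polynomial $q(\lambda)$ in \eqref{eq:qkata} and differentiation yields
\[
\partial_X^\alpha\lambda_i=-\frac{q_X(\lambda_i)}{q_\lambda(\lambda_i)},
\]
where $q_X$ denotes the derivative with $\lambda$ held fixed. It then suffices to bound $q_X(\lambda_i)$ from above and $q_\lambda(\lambda_i)$ from below.

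First I would compute
\[
q_X(\lambda)=-2(1+a)a_X\lambda^2+\bigl(6a_X+4aa_X+6a^2a_X-18bb_X\bigr)\lambda-\Delta_X,
\]
with $\Delta_X=12a^2a_X-54bb_X$. Using Lemma \ref{lem:NiP} ($|a_X|\preceq\sqrt a$, $|b_X|\preceq a$) together with $|b|\preceq a^{3/2}$ coming from \eqref{eq:cond:1}, the coefficients of $\lambda^2$, $\lambda^1$, $\lambda^0$ in $q_X$ are $O(\sqrt a)$, $O(\sqrt a)$, $O(a^{5/2})$ respectively. Inserting the size estimates from Proposition \ref{pro:Skon} ($\lambda_1\preceq a^2$, $\lambda_2\simeq a$, $\lambda_3\simeq 1$), I obtain
\[
|q_X(\lambda_1)|\preceq a^{5/2},\qquad |q_X(\lambda_2)|\preceq a^{3/2},\qquad |q_X(\lambda_3)|\preceq \sqrt a.
\]

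For the denominator I would use the factorization $q_\lambda(\lambda_i)=\prod_{j\neq i}(\lambda_i-\lambda_j)$. Proposition \ref{pro:Skon} gives the spectral gaps $\lambda_2-\lambda_1\simeq a$, $\lambda_3-\lambda_2\simeq 1$, $\lambda_3-\lambda_1\simeq 1$, which yield $|q_\lambda(\lambda_1)|\simeq a$, $|q_\lambda(\lambda_2)|\simeq a$, $|q_\lambda(\lambda_3)|\simeq 1$. Dividing gives the three claimed bounds $|\partial_X^\alpha\lambda_1|\preceq a^{3/2}$, $|\partial_X^\alpha\lambda_2|\preceq\sqrt a$, $|\partial_X^\alpha\lambda_3|\preceq\sqrt a$.

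The only subtle point is obtaining the clean spectral gaps near $t=0$, where $\lambda_1$ and $\lambda_2$ both vanish; this is exactly what Proposition \ref{pro:Skon} provides, so the main work is the bookkeeping of powers of $a$ in $q_X(\lambda_i)$. One must also be careful that all the estimates are uniform on ${\mathcal U}\cap\{t>0\}$, which they are since Lemma \ref{lem:NiP} and Proposition \ref{pro:Skon} hold uniformly there.
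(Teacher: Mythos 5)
Your proof is correct and follows essentially the same route as the paper: implicit differentiation of $q(\lambda)=0$ to get $\partial_X^\alpha\lambda_i=-q_X(\lambda_i)/q_\lambda(\lambda_i)$, bounding the numerator via Lemma \ref{lem:NiP} and the eigenvalue sizes from Proposition \ref{pro:Skon}, and bounding the denominator via the spectral gaps $q_\lambda(\lambda_j)\simeq a$ ($j=1,2$), $q_\lambda(\lambda_3)\simeq 1$. The only difference is cosmetic: you record the numerator bound separately for each $\lambda_i$, whereas the paper packages it once as $|\partial_X^\alpha q(\lambda_j)|\preceq\sqrt a\,\lambda_j+a^{5/2}$ and then substitutes.
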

\begin{proof} 
Since
\[
\dif_X^{\alpha}q(\lambda)=-\dif_X^{\alpha}(2a+a^2)\lambda^2+\dif_X^{\alpha}(6a+2a^2+2a^3-9b^2)\lambda-\dif_X^{\alpha}(4a^3-27b^2)
\]
it follows from Lemma \ref{lem:NiP} that
\begin{align*}
\big|\dif_X^{\alpha}q(\lambda_j)\big|\preceq |\dif_X^{\alpha}a|\lambda_j^2+\big(|\dif_X^{\alpha}a|+|b||\dif_X^{\alpha}b|\big)\lambda_j+\big(a^2|\dif_X^{\alpha}a|+|b||\dif_X^{\alpha}b|\big)\\
\preceq |\dif_X^{\alpha}a|\lambda_j+a^{5/2}\preceq \sqrt{a}\,\lambda_j+a^{5/2}.
\end{align*}
From $
q_{\lambda}(\lambda_j)\partial_X^{\alpha}\lambda_j+\partial_X^{\alpha}q(\lambda_j)=0$ 
one has
\[
|\dif_X^{\alpha}\lambda_j|\preceq \frac{\sqrt{a}\,\lambda_j+a^{5/2}}{|q_{\lambda}(\lambda_j)|}.
\]
Noting $q_{\lambda}(\lambda_j)=\prod_{k\neq j}(\lambda_j-\lambda_k)$ one sees 
\begin{equation}
\label{eq:dmod}
q_{\lambda}(\lambda_j)\simeq a \;\;\text{for}\;\;j=1,2,\quad  q_{\lambda}(\lambda_3)\simeq 1
\end{equation}
 thanks to Proposition \ref{pro:Skon} and hence the assertion.
\end{proof}
Next estimate $\dif_t\lambda_j$. 
\begin{lem}
\label{lem:bibun:t} Assuming \eqref{eq:cond:1} one has
\begin{equation}
\label{eq:bibun:b}
|\dif_t\lambda_1|\preceq a,\quad |\dif_t\lambda_2|\preceq 1,\quad |\dif_t\lambda_3|\preceq 1.
\end{equation}
\end{lem}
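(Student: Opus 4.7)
The plan is to mimic the proof of Lemma \ref{lem:bibun} but with $\partial_t$ in place of $\partial_X^\alpha$. Starting from $q(\lambda_j(t,X))\equiv 0$ on $\mathcal{U}\cap\{t>0\}$ and differentiating in $t$ gives
\[
q_\lambda(\lambda_j)\,\partial_t\lambda_j+(\partial_t q)(\lambda_j)=0,
\]
where $(\partial_t q)(\lambda)$ denotes the $t$-derivative of $q$ with $\lambda$ frozen, namely
\[
(\partial_t q)(\lambda)=-\partial_t(2a+a^2)\lambda^2+\partial_t(6a+2a^2+2a^3-9b^2)\lambda-\partial_t(4a^3-27b^2).
\]
The inputs to estimate this are $|\partial_t a|\preceq 1$ (which is immediate from the standing hypothesis that $a$ has bounded derivatives of all orders, not from Lemma \ref{lem:NiP}) and $|\partial_t b|\preceq \sqrt{a}$ from Lemma \ref{lem:NiP}.

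Using $|b|\preceq a^{3/2}$ together with the two bounds above, I would expand and collect to get
\[
\bigl|(\partial_t q)(\lambda_j)\bigr|\preceq \lambda_j^2+\lambda_j+a^2,
\]
where the $\lambda_j^2$ term comes from $\partial_t(2a+a^2)$, the $\lambda_j$ term from $\partial_t(6a+\cdots)+|b||\partial_t b|\preceq 1+a^2$, and the constant term from $a^2|\partial_t a|+|b||\partial_t b|\preceq a^2$. Then the relation $\partial_t\lambda_j=-(\partial_t q)(\lambda_j)/q_\lambda(\lambda_j)$ combined with \eqref{eq:dmod} reduces the proof to plugging in the sizes of $\lambda_j$ from Proposition \ref{pro:Skon}.

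For $j=1$, $\lambda_1\preceq a^2$ gives $|(\partial_t q)(\lambda_1)|\preceq a^2$, so $|\partial_t\lambda_1|\preceq a^2/a=a$. For $j=2$, $\lambda_2\simeq a$ gives $|(\partial_t q)(\lambda_2)|\preceq a$, so $|\partial_t\lambda_2|\preceq a/a=1$. For $j=3$, $\lambda_3\simeq 1$ gives $|(\partial_t q)(\lambda_3)|\preceq 1$ and $q_\lambda(\lambda_3)\simeq 1$, yielding $|\partial_t\lambda_3|\preceq 1$.

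The only step that is not completely mechanical is the first bound: obtaining $|\partial_t\lambda_1|\preceq a$ rather than a weaker $\preceq 1$ requires genuinely using $\lambda_1\preceq a^2$ (the right inequality of \eqref{eq:kon:1}), not merely $\lambda_1\preceq a$. The fact that Lemma \ref{lem:NiP} provides no better bound than $|\partial_t a|\preceq 1$ (in contrast to $|\partial_X^\alpha a|\preceq\sqrt{a}$) is why, unlike in Lemma \ref{lem:bibun}, one does not gain an extra $\sqrt{a}$ factor in $\partial_t\lambda_2$ and $\partial_t\lambda_3$; this is consistent with the stated bounds.
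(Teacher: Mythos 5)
Your proof is correct and follows essentially the same route as the paper, which simply says ``repeating the same arguments in the proof of Lemma \ref{lem:bibun}'' and writes the bound $|\dif_tq(\lambda_j)|\preceq |\dif_ta|\lambda_j+a^2|\dif_ta|+|b||\dif_tb|\preceq \lambda_j+a^2$ before dividing by $q_\lambda(\lambda_j)$. You have merely made explicit what the paper leaves implicit: the absorption of $\lambda_j^2$ into $\lambda_j$ (since $\lambda_j$ is bounded by Proposition \ref{pro:Skon}), the use of $|\dif_ta|\preceq 1$ rather than a gain of $\sqrt a$, and the observation that $\lambda_1\preceq a^2$ is the sharp input needed for the first estimate.
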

\begin{proof}Repeating the same arguments in the proof of Lemma \ref{lem:bibun} one has
\[
|\dif_tq(\lambda_j)|\preceq |\dif_ta|\lambda_j+a^2|\dif_ta|+|b||\dif_tb|\preceq \lambda_j+a^2
\]
which proves the assertion.
\end{proof}

%
\section{How to apply diagonal symmetrizers}
\label{sec:enehyoka:1}

 Taking   
\[
Pu=\dif_t^3u-a(t,x)\dif_x^2\dif_tu-b(t,x)\dif_x^3u
\]
with one space variable $x\in\R$, we explain how to apply diagonal symmetrizers constructed in preceding sections. 
Assume that
\begin{equation}
\label{eq:cond:b}
\Delta(t,x)=4\,a(t,x)^3-27\,b(t,x)^2\geq 0\quad (t,x)\in [0,T)\times W
\end{equation}
and $a(0,0)=0$ such that $p(\tau,0,0,1)=0$ has the triple root $\tau=0$ where $W$ is an open interval containing the origin. In what follows we work in a region where $a(t,x)>0$. With $U=(\dif_t^2u,\dif_x\dif_tu,\dif_x^2u)$ the equation $Pu=f$ is reduced to
\begin{equation}
\label{eq:kei:rei}
\dif_tU=A(t,x)\dif_xU+F,\quad A=\begin{bmatrix}0&a&b\\
                                                1&0&0\\
                                                0&1&0\\
                                                \end{bmatrix},\quad F=\begin{bmatrix}f\\
                                                0\\
                                                0\\
                                                \end{bmatrix}.
\end{equation}
Then $S$ given by \eqref{eq:esu} symmetrizes $A$, and $T$ given by \eqref{eq:ordT} diagonalizes $S$. So we set $V=T^{-1}U$ and  rewrite  the equation \eqref{eq:kei:rei} to
\begin{equation}
\label{eq:kei:rei:2}
\dif_tV=A^T\dif_xV+\big((\dif_tT^{-1})T-A^T(\dif_xT^{-1})T\big)V+T^{-1}F
\end{equation}
where $A^T=T^{-1}AT$. To simplify notation let us write \eqref{eq:kei:rei:2} with $f=0$ as
\[
\dif_tV={\mathcal A}\dif_xV+{\mathcal B}V
\]
with ${\mathcal A}=A^T$ and ${\mathcal B}=(\dif_tT^{-1})T-{\mathcal A}(\dif_xT^{-1})T$. 
%

\subsection{Energy with scalar weight}
\label{sec:howapp}

Consider an energy with a scalar weight  $\phi(t,x)>0$ with $\dif_t\phi=1$ and $|\dif_x\phi|\preceq 1$;
\[
(\phi^{-N} \Lambda V,V)=\int \phi^{-N}\lr{\Lambda V,V}dx=\sum_{j=1}^3\int \phi^{-N}\lambda_j |V_j|^2\,dx
\]
where $\lr{V,W}$ stands for the inner product in $\C^3$ and $N>0$ is a positive parameter. In what follows we assume that $V(t,x)$ has small support in $x$. Note that 
\begin{align*}
\frac{d}{dt}\,(\phi^{-N} \Lambda V,V)=-N\big(\phi^{-N-1}\Lambda V,V\big)
+\big(\phi^{-N}(\dif_t\Lambda)V,V\big)\\
+2\,{\mathsf{Re}}\,\big(\phi^{-N}\Lambda({\mathcal A}\dif_xV+{\mathcal B}V),V\big).
\end{align*}
%
%
%
Since $\Lambda{\mathcal A}$ is symmetric and hence
\begin{equation}
\label{eq:DAdifV}
2\,{\mathsf{Re}}\,(\phi^{-N}\Lambda{\mathcal A}\dif_x V,V)=N\big(\phi^{-N-1}(\dif_x\phi)\Lambda{\mathcal A}V,V\big)-\big(\phi^{-N}\dif_x(\Lambda{\mathcal A})V,V).
\end{equation}
As for a scalar weight $\phi$ we assume
\begin{equation}
\label{eq:katei:D}
\phi^2\,a\preceq \Delta,\qquad \phi\big|\dif_t\Delta\big|\preceq \Delta,\qquad \phi\big|\dif_ta\big|\preceq a.
\end{equation}
\begin{lem}
\label{lem:katei:D}The assumption \eqref{eq:katei:D} implies
\begin{equation}
\label{eq:katei:2}
\phi^2\preceq \lambda_1,\qquad \phi\big|\dif_t\lambda_1\big|\preceq \lambda_1,\qquad \phi\big|\dif_t\lambda_2\big|\preceq \lambda_2.
\end{equation}
\end{lem}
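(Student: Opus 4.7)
The plan is to handle the three inequalities separately but via a common device: rewrite $q(\lambda)$ so that its coefficients are polynomials in $a$ and $\Delta$ only, then differentiate.

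The first inequality $\phi^2\preceq\lambda_1$ is immediate from Proposition \ref{pro:Skon}. Indeed, since $a(0,\bar X)=0$ we may shrink $\mathcal U$ so that $6a+2a^2+2a^3\leq C a$; then \eqref{eq:kon:1} gives $\lambda_1\geq \Delta/(Ca)$, and combined with the hypothesis $\phi^2 a\preceq \Delta$ this yields $\phi^2\preceq\Delta/a\preceq\lambda_1$. As a byproduct, $\Delta\preceq a\lambda_1$, which I will use below.

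For the $t$-derivative estimates, I would use the identity $27b^2=4a^3-\Delta$ to eliminate $b$ from \eqref{eq:qkata}, obtaining
\[
q(\lambda)=\lambda^3-(3+2a+a^2)\lambda^2+\Big(6a+2a^2+\tfrac{2a^3+\Delta}{3}\Big)\lambda-\Delta.
\]
Differentiating in $t$ expresses $\partial_t q(\lambda)$ as a linear combination of $\partial_t a$ and $\partial_t\Delta$ with polynomial coefficients in $a$ and $\lambda$. From the implicit differentiation relation $q_\lambda(\lambda_j)\partial_t\lambda_j=-\partial_t q(\lambda_j)$ together with \eqref{eq:dmod} (namely $q_\lambda(\lambda_1)\simeq q_\lambda(\lambda_2)\simeq a$), the inequalities reduce to bounding $|\partial_t q(\lambda_j)|$ by $a\lambda_j/\phi$ for $j=1,2$.

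For $j=1$: substitute $\lambda_1\preceq a^2$ and $\Delta\preceq a\lambda_1$; the dominant contributions $|\partial_t a|\lambda_1$ and $|\partial_t\Delta|$ are respectively bounded by $a\lambda_1/\phi$ (using $\phi|\partial_t a|\preceq a$) and by $\Delta/\phi\preceq a\lambda_1/\phi$ (using $\phi|\partial_t\Delta|\preceq\Delta$); the remaining terms $|\partial_t a|\lambda_1^2$, $a^2|\partial_t a|\lambda_1$, $|\partial_t\Delta|\lambda_1$ are strictly smaller after invoking $\lambda_1\preceq a^2$ and $\Delta\preceq a^3$. Division by $|q_\lambda(\lambda_1)|\simeq a$ gives $\phi|\partial_t\lambda_1|\preceq\lambda_1$. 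For $j=2$: substitute $\lambda_2\simeq a$ so that every term is $\preceq a^2/\phi$, and divide by $|q_\lambda(\lambda_2)|\simeq a$ to obtain $\phi|\partial_t\lambda_2|\preceq a\simeq\lambda_2$.

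The only non-routine step is the algebraic elimination of $b$ from $q(\lambda)$; once that is done, all three estimates become direct substitutions. The main obstacle I anticipate is bookkeeping: one must verify that \emph{each} monomial in $\partial_t q(\lambda_j)$ after differentiation is dominated by $a\lambda_j/\phi$, which crucially requires combining the hypothesis $\phi^2 a\preceq\Delta$ (to pass through $\Delta\preceq a\lambda_1$ for the $j=1$ case) with the Proposition \ref{pro:Skon} bounds on the eigenvalues.
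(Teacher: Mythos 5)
Your proof is correct and rests on the same core mechanism as the paper's: differentiate the characteristic equation implicitly, $q_\lambda(\lambda_j)\partial_t\lambda_j=-\partial_t q(\lambda_j)$, bound the numerator, and divide by $q_\lambda(\lambda_j)\simeq a$ from \eqref{eq:dmod}. The genuine variation is your algebraic elimination of $b$ via $27b^2=4a^3-\Delta$, which turns the coefficient $6a+2a^2+2a^3-9b^2$ of $\lambda$ in \eqref{eq:qkata} into $6a+2a^2+(2a^3+\Delta)/3$, so that $\partial_t q(\lambda)$ involves only $\partial_t a$ and $\partial_t\Delta$, both of which are controlled directly by \eqref{eq:katei:D}. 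The paper instead retains $b$ in $q$ and invokes Lemma~\ref{lem:NiP} to control $|b\,\partial_t b|\preceq a^2$ (and uses the resulting bound $|\partial_t\Delta|\preceq a^2$ in the $\lambda_2$ estimate). Your route buys a modest simplification in self-containment: the derivative estimate for $\lambda_1,\lambda_2$ requires nothing beyond the hypothesis \eqref{eq:katei:D}, Proposition~\ref{pro:Skon} (giving $\Delta/a\preceq\lambda_1\preceq a^2$, $\lambda_2\simeq a$, $\Delta\preceq a^3$), and \eqref{eq:dmod}, with no separate appeal to Lemma~\ref{lem:NiP}. The term-by-term bookkeeping you carry out (splitting into the dominant terms $|\partial_t a|\lambda_j$, $|\partial_t\Delta|$ and the remaining higher-order ones) checks out for both $j=1$ and $j=2$.
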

\begin{proof} In view of Proposition \ref{pro:Skon} the assertion $\phi^2\preceq \lambda_1$ is clear. Note that from $|\dif_tq(\lambda_i)|\preceq (|\dif_ta|+|b||\dif_tb|)\lambda_i+|\dif_t\Delta|$ and Lemma \ref{lem:NiP} it follows that
\[
\big|\dif_t\lambda_i\big|\preceq \frac{\big(|\dif_ta|+a^2\big)\lambda_i+|\dif_t\Delta|}{|q_{\lambda}(\lambda_i)|}.
\]
Taking \eqref{eq:dmod} into account one has
\[
|\dif_t\lambda_i|\preceq \frac{|\dif_ta|}{a}\lambda_i+a\,\lambda_i+\frac{|\dif_t\Delta|}{a},\quad i=1,2
\]
which implies $\phi|\dif_t\lambda_1|\preceq \lambda_1$ thanks to \eqref{eq:katei:D} and \eqref{eq:kon:1}. As for $\lambda_2$ noting that $|\dif_t\Delta|\preceq a^2$ by Lemma \ref{lem:NiP} the assertion follows immediately from \eqref{eq:katei:D} and \eqref{eq:kon:2}.
\end{proof}
%

\subsection{Estimate of energy, terms $\lr{(\dif_t\Lambda)V,V}$, $\lr{(\dif_x\phi)\Lambda{\mathcal A}V,V}$}

Thanks to \eqref{eq:katei:2}, $\big|\phi^{-N}\lr{(\dif_t\Lambda)V,V}\big|$ is bounded by $N\phi^{-N-1}\lr{\Lambda V,V}$ taking $N$ large. On the other hand,  from Lemma \ref{lem:difTAT} below  it follows that
\[
\Lambda{\mathcal A}=\begin{bmatrix}
O(\lambda_1\sqrt{a})&O(\lambda_1)&O(\lambda_1\sqrt{a})\\
O(a^2)&O(a^{3/2})&O(a)\\
O(a^{3/2})&O(a)&O(a^{5/2})\\
\end{bmatrix}.
\]
Recalling that $\Lambda{\mathcal A}$ is symmetric it is clear that $\big|\lr{\Lambda{\mathcal A}V,V}\big|$ is bounded by
\begin{align*}
\sqrt{a}(\lambda_1|V_1|^2+a|V_2|^2+a^2|V_3|^2)+\lambda_1\,|V_1|\,|V_2|
+\lambda_1\sqrt{a}\,|V_1|\,|V_3|
+a|V_2||V_3|.
\end{align*}
Since $\lambda_1\preceq \sqrt{\lambda_1}\,a$ and hence $\lambda_1|V_1|\,|V_2|\preceq \sqrt{a}\,(\lambda_1|V_1|^2+a\,|V_2|^2)$  it follows that
\begin{equation}
\label{eq:adphi}
 \big|N\phi^{-N-1}(\dif_x\phi)\lr{\Lambda{\mathcal A}V,V}\big|\leq CN\sqrt{a}\,|\dif_x\phi|\,\sum_{j=1}^3\phi^{-N-1}\lambda_j|V_j|^2
 \end{equation}
 with some $C>0$. In a small neighborhood of $(0,0)$ where $a$ is enough small one can bound the right-hand side  by $N\phi^{-N-1}\lr{\Lambda V,V}$.
 
\subsection{Estimate of energy, term $\lr{\Lambda{\mathcal B}V,V}$}

Recall
\begin{align*}
\lr{\Lambda{\mathcal B}V,V}
=\lr{\Lambda(\dif_tT^{-1})TV,V}-\lr{\Lambda{\mathcal A}(\dif_xT^{-1})TV, V}.
\end{align*}
Applying Lemmas \ref{lem:bibun} and \ref{lem:bibun:t} we estimate $(\dif_tT^{-1})T$ and $(\dif_xT^{-1})T$. First note that
\[
(\dif_tT^{-1})T=(\dif_t(^{t}T))T=( \lr{\dif_t {\bf t}_i,{\bf t}_j})
\]
and $\lr{\dif_t {\bf t}_i,{\bf t}_j}=-\lr{{\bf t}_i, \dif_t{\bf t}_j}=-\lr{ \dif_t {\bf t}_j,{\bf t}_i}$ so that $(\dif_tT^{-1})T$ is antisymmetric. Note that
\begin{equation}
\label{eq:d:titj}
\lr{\dif_t {\bf t}_i,{\bf t}_j}=\frac{1}{d_id_j}\sum_{k=1}^3\dif_t\ell_{ki}\,\cdot{\bar \ell}_{kj}=\frac{1}{d_i}\sum_{k=1}^3\dif_t\ell_{ki}\,\cdot{\bar t}_{kj}
\end{equation}
because $\sum_{k=1}^3\ell_{ki}\,{\bar \ell}_{kj}=0$ if $i\neq j$. 
Thanks to Proposition \ref{pro:Skon} and Lemmas \ref{lem:bibun:t}, \ref{lem:NiP}  it follows that
\begin{equation}
\label{eq:hyoka:el:zero}
\begin{split}
|\ell_{11}|\preceq a^2,\quad |\ell_{21}|\preceq a^{3/2},\quad |\ell_{31}|\preceq a,\\
|\ell_{12}|\preceq a^{5/2},\quad |\ell_{22}|\preceq a,\quad |\ell_{32}|\preceq a^{3/2},\\
|\ell_{13}|\preceq 1,\quad |\ell_{23}|\preceq a^{5/2},\quad |\ell_{33}|\preceq a
\end{split}
\end{equation}
and that
\begin{equation}
\label{eq:hyoka:el:t}
\begin{split}
|\dif_t\ell_{11}|\preceq a,\quad |\dif_t\ell_{21}|\preceq \sqrt{a},\quad |\dif_t\ell_{31}|\preceq 1,\\
|\dif_t\ell_{12}|\preceq a^{3/2},\quad |\dif_t\ell_{22}|\preceq 1,\quad |\dif_t\ell_{32}|\preceq \sqrt{a},\\
|\dif_t\ell_{13}|\preceq 1,\quad |\dif_t\ell_{23}|\preceq a^{3/2},\quad |\dif_t\ell_{33}|\preceq 1.
\end{split}
\end{equation}
Therefore taking \eqref{eq:ordT}, \eqref{eq:d:titj} and \eqref{eq:hyoka:el:t} into account one obtains
\begin{equation}
\label{eq:dT:-1:T}
(\dif_tT^{-1})T=\begin{bmatrix}0&O(1/\sqrt{a})&O(1)\\
O(1/\sqrt{a})&0&O(\sqrt{a})\\
O(1)&O(\sqrt{a})&0\\
\end{bmatrix}.
\end{equation}
In order to estimate $\big|\phi^{-N}\lr{\Lambda(\dif_tT^{-1})TV,V}\big|$,  noting $\Lambda\simeq {\rm diag}(\lambda_1,a,1)$ and $\lambda_1\preceq a^2$, it suffices to estimate
\[
\sqrt{a}\,|V_1|| V_2|+|V_1||V_3|+\sqrt{a}\,|V_2||V_3|.
\]
Note that
\begin{align*}
\sqrt{a}\,|V_1||V_2|\preceq 
\phi^{-1}\lambda_1 |V_1|^2+a\frac{\phi}{\lambda_1} |V_2|^2
\preceq \phi^{-1}(\lambda_1 |V_1|^2+a |V_2|^2)
\end{align*}
because $\phi/\lambda_1\preceq  1/\phi$ by \eqref{eq:katei:2}. As for $|V_1||V_3|$ one has
\begin{align*}
|V_1||V_3|\preceq \phi^{-1}\lambda_1|V_1|^2+(\phi /\lambda_1)|V_3|^2
\preceq \phi^{-1}(\lambda_1|V_1|^2+|V_3|^2).
\end{align*}
Finally since $\sqrt{a}\,|V_2||V_3|\preceq a\,|V_2|^2+|V_3|^2$ 
one concludes that $\big|\phi^{-N}\lr{\Lambda(\dif_tT^{-1})TV,V}\big|$ is bounded by $N\phi^{-N-1}\lr{\Lambda V,V}$ taking $N$ large.

Turn to $\big|\phi^{-N}\lr{\Lambda{\mathcal A}(\dif_xT^{-1})TV, V}\big|$. 
From Proposition \ref{pro:Skon} and Lemmas \ref{lem:bibun}, \ref{lem:NiP} one has
\begin{equation}
\label{eq:hyoka:el:x}
\begin{split}
|\dif_x\ell_{11}|\preceq a^{3/2},\quad |\dif_x\ell_{21}|\preceq a,\quad |\dif_x\ell_{31}|\preceq \sqrt{a},\\
|\dif_x\ell_{12}|\preceq a^{2},\quad |\dif_x\ell_{22}|\preceq \sqrt{a},\quad |\dif_x\ell_{32}|\preceq a,\\
|\dif_x\ell_{13}|\preceq 1,\quad |\dif_x\ell_{23}|\preceq a^{2},\quad |\dif_x\ell_{33}|\preceq \sqrt{a}
\end{split}
\end{equation}
from which one concludes 
\begin{equation}
\label{eq:dx:TtT:a}
\dif_xT^{-1}={^t}(\dif_xT)=\begin{bmatrix}
O(\sqrt{a})&O(1)&O(1/\sqrt{a})\\
O(a)&O(1/\sqrt{a})&O(1)\\
O(1)&O(a^2)&O(\sqrt{a})\\
\end{bmatrix}
\end{equation}
and hence 
\begin{equation}
\label{eq:dx:TtT}
(\dif_xT^{-1})T=\begin{bmatrix}0&O(1)&O(\sqrt{a})\\
O(1)&0&O(a)\\
O(\sqrt{a})&O(a)&0\\
\end{bmatrix}.
\end{equation}
Here note that
\begin{lem}
\label{lem:difTAT} One has
\[
{\mathcal A}=\begin{bmatrix}
O(\sqrt{a})&O(1)&O(\sqrt{a})\\
O(a)&O(\sqrt{a})&O(1)\\
O(a^{3/2})&O(a)&O(a^{5/2})\\
\end{bmatrix},\;\; 
\dif_x{\mathcal A}=\begin{bmatrix}
O(1)&O(1/\sqrt{a})&O(1)\\
O(\sqrt{a})&O(1)&O(1/\sqrt{a})\\
O(a)&O(\sqrt{a})&O(\sqrt{a})\\
\end{bmatrix}.
\]
\end{lem}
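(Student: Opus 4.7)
The plan is to verify both formulas by direct matrix multiplication, using the orthogonality $T^{-1}={}^{t}T$ and the structural estimates \eqref{eq:ordT} for $T$ and \eqref{eq:dx:TtT:a} for $\partial_{x}T^{-1}$.

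For $\mathcal{A}=T^{-1}AT$, I would first form $AT$. Since $A$ acts on the right as a shift, rows $2$ and $3$ of $AT$ equal rows $1$ and $2$ of $T$, while the first row is $a\cdot(\text{row } 2 \text{ of } T)+b\cdot(\text{row } 3 \text{ of } T)$. Combined with \eqref{eq:ordT} and the hyperbolicity consequence $|b|\leq(2/\sqrt{27})\,a^{3/2}$ (from $4a^{3}\geq 27b^{2}$ in \eqref{eq:cond:1}), this yields orders
\[
AT=\begin{pmatrix} O(a^{3/2}) & O(a) & O(a^{5/2})\\ O(a) & O(a^{3/2}) & O(1)\\ O(\sqrt{a}) & O(1) & O(a^{5/2})\end{pmatrix}.
\]
Then $\mathcal{A}_{ij}=\sum_{k}t_{ki}(AT)_{kj}$ is read off entrywise using \eqref{eq:ordT}, and the dominant term in each of the nine positions reproduces the claimed pattern for $\mathcal{A}$.

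For $\partial_{x}\mathcal{A}$, I would apply the product rule
\[
\partial_{x}\mathcal{A}=(\partial_{x}T^{-1})\,AT+T^{-1}(\partial_{x}A)\,T+T^{-1}A\,(\partial_{x}T),
\]
with $\partial_{x}T^{-1}$ estimated by \eqref{eq:dx:TtT:a} and $\partial_{x}T={}^{t}(\partial_{x}T^{-1})$ by orthogonality. For the middle term, $\partial_{x}A$ is supported in its first row; the bounds $|\partial_{x}a|\preceq\sqrt{a}$ and $|\partial_{x}b|\preceq a$ from Lemma \ref{lem:NiP}, together with \eqref{eq:ordT}, show that every entry of $T^{-1}(\partial_{x}A)T$ is of order at most $O(\sqrt{a})$, hence absorbed by the stated bounds. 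For the outer two terms, the singular $O(1/\sqrt{a})$ entries of $\partial_{x}T^{-1}$ and $\partial_{x}T$ produce worst-case $O(1/\sqrt{a})$ contributions precisely in positions $(1,2)$ and $(2,3)$: for instance $(\partial_{x}T^{-1})_{13}(AT)_{32}=O(1/\sqrt{a})\cdot O(1)$ for $(1,2)$, and $(\partial_{x}T^{-1})_{22}(AT)_{23}=O(1/\sqrt{a})\cdot O(1)$ for $(2,3)$. Elsewhere the $O(1/\sqrt{a})$ factors are damped by small partners, giving the stated smaller bounds.

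The main obstacle is simply entrywise bookkeeping: in each of the nine positions one must track how the $O(1/\sqrt{a})$ entries of $\partial_{x}T^{-1}$ and $\partial_{x}T$ pair with rows of $AT$ and columns of $T^{-1}A$. For the third row, for instance, the prefactors $t_{k3}$ are $(O(1),O(a^{5/2}),O(a))$, so any singular contribution is damped to give $O(a)$, $O(\sqrt{a})$, $O(\sqrt{a})$ in the three entries. A useful sanity check is the identity $\partial_{x}\mathcal{A}=[\mathcal{A},B]+T^{-1}(\partial_{x}A)T$ with $B=T^{-1}(\partial_{x}T)$, which is antisymmetric and bounded (it equals the negative transpose of \eqref{eq:dx:TtT}); since both $\mathcal{A}$ and $B$ have $O(1)$ entries throughout, $\partial_{x}\mathcal{A}$ is in fact bounded, so the claimed estimates are loose at $(1,2)$ and $(2,3)$ but amply sufficient for the energy estimates of the next subsection.
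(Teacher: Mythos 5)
Your proof is correct and follows essentially the same route as the paper: both arguments reduce to entry-by-entry bookkeeping of ${\mathcal A}=T^{-1}AT$ from the order estimates \eqref{eq:ordT} for $T$, \eqref{eq:dx:TtT:a} for $\partial_xT^{-1}$, and Lemma \ref{lem:NiP} for $a,b$. The paper works from the explicit scalar expression \eqref{eq:nami:a}, ${\tilde a}_{ij}=t_{1i}\,a\,t_{2j}+t_{1i}\,b\,t_{3j}+t_{2i}t_{1j}+t_{3i}t_{2j}$, and differentiates it, tracking $\partial_x(1/d_j)=O(a^{-3/2})$ for $j=1,2$; you instead form $AT$ first and then use the three-term product rule $\partial_x{\mathcal A}=(\partial_xT^{-1})AT+T^{-1}(\partial_xA)T+T^{-1}A(\partial_xT)$, which is the same bookkeeping with a different grouping, and your intermediate matrix $AT$ and the resulting entry orders check out.

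Your closing observation via the identity $\partial_x{\mathcal A}=[{\mathcal A},B]+T^{-1}(\partial_xA)T$ with $B=T^{-1}\partial_xT=-(\partial_xT^{-1})T$ is correct and not in the paper: since ${\mathcal A}$ and $B$ have uniformly bounded entries and $T^{-1}(\partial_xA)T$ has operator norm $O(\sqrt{a})$ (orthogonality of $T$), $\partial_x{\mathcal A}$ is in fact $O(1)$ entrywise, so the $(1,2)$ and $(2,3)$ bounds $O(1/\sqrt{a})$ in the statement are not sharp. This is a genuine refinement — the naive product rule sees the two worst terms as $O(1/\sqrt{a})$ without detecting their cancellation — although, as you note, the weaker bounds of the lemma already suffice for the energy estimates, so the paper has no need of it.
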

\begin{proof} With ${\mathcal A}=({\tilde a}_{ij})$ it is clear that
\begin{equation}
\label{eq:nami:a}
{\tilde a}_{ij}=t_{1i}\,a\,t_{2j}+t_{1i}\,b\,t_{3j}+t_{2i}t_{1j}+t_{3i}t_{2j}.
\end{equation}
Since $t_{2i}t_{1j}=O(\sqrt{a})$ unless $(i,j)=(2,3)$ and $t_{3i}t_{2j}=O(\sqrt{a})$ unless $(i,j)=(1,2)$ the first assertion follows from \eqref{eq:ordT} and \eqref{eq:nami:a}. Note that $\dif_x\big(1/d_j\big)=O(1/a^{3/2})$ for $j=1,2$ and $\dif_x\big(1/d_3\big)=O(1)$ then the second assertion follows from \eqref{eq:dx:TtT:a}, \eqref{eq:nami:a} and \eqref{eq:ordT}.
\end{proof}
From Lemma \ref{lem:difTAT} and \eqref{eq:dx:TtT} one has
\[
{\mathcal A}(\dif_xT^{-1})T=\begin{bmatrix}O(1)&O(\sqrt{a})&O(a)\\
O(\sqrt{a})&O(a)&O(a^{3/2})\\
O(a)&O(a^{3/2})&O(a^2)\\
\end{bmatrix}.
\]
Then it is clear that $\big|\lr{\Lambda{\mathcal A}(\dif_xT^{-1})TV, V}\big|$ is bounded by
\[
a^{3/2}|V_1||V_2|+a|V_1||V_3|
+a^{3/2}\, |V_2||V_3|+\lambda_1|V_1|^2+a^2(|V_2|^2+|V_3|^2)
\]
where
\begin{gather*}
a^{3/2}|V_1||V_2|\preceq a\,\phi^{-1}\lambda_1|V_1|^2+\frac{a^2\phi}{\lambda_1}|V_2|^2\preceq a\,\phi^{-1}(\lambda_1|V_1|^2+a|V_2|^2),\\
a\,|V_1||V_3|\preceq a\,\phi^{-1}\lambda_1|V_1|^2+a\,\frac{\phi}{\lambda_1}|V_3|^2\preceq a\,\phi^{-1}(\lambda_1|V_1|^2+|V_3|^2),\\
a^{3/2}\,|V_2||V_3|\preceq a\,(a|V_2|^2+|V_3|^2)
\end{gather*}
hence $\big|\phi^{-N}\lr{\Lambda{\mathcal A}(\dif_xT^{-1})TV, V}\big|$ is bounded by $N\phi^{-N-1}\lr{\Lambda V,V}$ taking $N$ large.

\subsection{Estimate of energy, term $\lr{\dif_x(\Lambda{\mathcal A})V,V}$}

Write 
\[
\lr{\dif_x(\Lambda{\mathcal A})V,V}=\lr{(\dif_x\Lambda){\mathcal A}V,V}+\lr{\Lambda(\dif_x{\mathcal A})V,V}
\]
and estimate each term on the right-hand side. To estimate the first term note
\begin{lem}
\label{lem:difx:lam}One has 
\begin{equation}
\label{eq:dla1:la1}
\big|\dif_x\lambda_1\big|/\lambda_1\preceq 1/\sqrt{a}+1/\sqrt{\Delta}\preceq 1/(\phi\sqrt{a}).
\end{equation}
\end{lem}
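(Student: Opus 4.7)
The plan is to differentiate the identity $q(\lambda_1)=0$ in $x$, giving
\[
\dif_x\lambda_1 = -\dif_x q(\lambda_1)/q_\lambda(\lambda_1),
\]
and then to sharpen the estimate of the numerator relative to the argument in Lemma \ref{lem:bibun}. From \eqref{eq:qkata},
\[
\dif_x q(\lambda) = -\dif_x(2a+a^2)\lambda^2 + \dif_x(6a+2a^2+2a^3-9b^2)\lambda - \dif_x\Delta.
\]
The crude bound $|\dif_x\Delta|\preceq a^{5/2}$ used previously is too lossy here; the new ingredient needed is the Glaeser-type inequality $|\dif_x\Delta|\preceq\sqrt{\Delta}$, which holds because $\Delta\geq 0$ is smooth with bounded second $X$-derivatives near $(0,\bar X)$.

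Using Lemma \ref{lem:NiP}, $|b|\preceq a^{3/2}$, and $\lambda_1\preceq a^2$ from Proposition \ref{pro:Skon}, the $\lambda^2$ term in $\dif_x q(\lambda_1)$ is $\preceq \sqrt a\cdot a^2\cdot\lambda_1\preceq \sqrt a\,\lambda_1$, the coefficient of $\lambda$ (apart from the $\dif_x\Delta/2$ piece absorbed below) is $\preceq\sqrt a$, and the remaining $\dif_x\Delta$ contribution is $\preceq\sqrt\Delta$. Thus
\[
|\dif_x q(\lambda_1)|\preceq \sqrt a\,\lambda_1+\sqrt\Delta.
\]
Using $|q_\lambda(\lambda_1)|\simeq a$ from \eqref{eq:dmod} and dividing by $\lambda_1$ gives
\[
|\dif_x\lambda_1|/\lambda_1\preceq 1/\sqrt a + \sqrt\Delta/(a\lambda_1).
\]
The lower bound $\lambda_1\succeq \Delta/a$ from \eqref{eq:kon:1} (recall $6a+2a^2+2a^3\simeq a$) converts the last term into $1/\sqrt\Delta$, establishing the first inequality of \eqref{eq:dla1:la1}.

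The second inequality follows directly from the weight hypothesis \eqref{eq:katei:D}: since $\phi^2 a\preceq \Delta$, one has $1/\sqrt\Delta\preceq 1/(\phi\sqrt a)$, and since $\phi\preceq 1$ in a small neighborhood of $(0,0)$ (recall $\dif_t\phi=1$ with $\phi$ bounded there), also $1/\sqrt a\preceq 1/(\phi\sqrt a)$. The main technical obstacle is the passage from $|\dif_x\Delta|\preceq a^{5/2}$ to $|\dif_x\Delta|\preceq\sqrt\Delta$: without this refinement the $\sqrt\Delta$ term above degrades to $a^{5/2}$ and the claimed bound fails precisely in the regime $\Delta\ll a^5$, where $\lambda_1$ is too small to absorb $a^{5/2}/\lambda_1$.
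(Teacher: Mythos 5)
Your proof is correct and follows essentially the same route as the paper: differentiate $q(\lambda_1)=0$, bound the coefficients via Lemma \ref{lem:NiP}, use $q_\lambda(\lambda_1)\simeq a$ and the lower bound $\lambda_1\succeq\Delta/a$ from Proposition \ref{pro:Skon}, and invoke the Glaeser-type bound $|\dif_x\Delta|\preceq\sqrt{\Delta}$, which is indeed the crux of the paper's argument as well. The only minor stylistic deviation is your splitting of the $\lambda$-coefficient through $2a^3-9b^2=\Delta/2+9b^2/2$, which is unnecessary since $|\dif_x(2a^3-9b^2)|\preceq a^{5/2}$ can be bounded directly by Lemma \ref{lem:NiP}.
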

\begin{proof}Recall
\[
\dif_x\lambda_1=\big\{\dif_x(2a+a^2)\lambda_1^2-\dif_x(6a+2a^2+2a^3-9b^2)\lambda_1+\dif_x\Delta\big\}/q_{\lambda}(\lambda_1)
\]
where Lemma \ref{lem:NiP} shows that
\[
\frac{|\dif_x\lambda_1|}{\lambda_1}\preceq \frac{\sqrt{a}\,\lambda_1^2+\sqrt{a}\,\lambda_1}{\lambda_1\,a}+\frac{|\dif_x\Delta|}{\lambda_1\,a}\preceq \frac{1}{\sqrt{a}}+\frac{|\dif_x\Delta|}{\lambda_1\,a}\preceq \frac{1}{\sqrt{a}}+\frac{|\dif_x\Delta|}{\Delta}
\]
because $1/\lambda_1\preceq a/\Delta$ by Proposition \ref{pro:Skon}.  Noting that $\Delta\geq 0$ in a neighborhood of $x=0$ we see that $|\dif_x\Delta|\preceq \sqrt{\Delta}$, hence the first inequality. The second inequality follows from the first one thanks to the assumption \eqref{eq:katei:D}.
\end{proof}
Since $|\dif_x\lambda_2|/\lambda_2=O(1/\sqrt{a})$ and $|\dif_x\lambda_3|/\lambda_3=O(1)$ by  Lemma \ref{lem:bibun} it follows from  \ref{lem:difx:lam} that $\phi(\dif_x\Lambda)=\phi \Lambda(\Lambda^{-1}\dif_x\Lambda)={\rm diag}\big(O(\lambda_1/\sqrt{a}),O(\phi\sqrt{a}),O(\phi)\big)$ for $\lambda_2\simeq a$ then by Lemma \ref{lem:difTAT} and \eqref{eq:dx:TtT:a} one sees
\[
\phi(\dif_x\Lambda){\mathcal A}=\begin{bmatrix}
O(\lambda_1)&O(\lambda_1/\sqrt{a})&O(\lambda_1)\\
O(\phi\,a^{3/2})&O(\phi\,a)&O(\phi\,\sqrt{a})\\
O(\phi\, a^{3/2})&O(\phi\, a)&O(\phi\, a^{5/2})
\end{bmatrix}.
\]
Therefore to estimate $\phi\lr{(\dif_x\Lambda){\mathcal A}V,V}$ it suffices to bound
\begin{align*}
\Big(\frac{\lambda_1}{\sqrt{a}}+a^{3/2}\phi\Big)|V_1||V_2|+(\lambda_1+a^{3/2}\phi)|V_1||V_3|+\phi \sqrt{a}|V_2||V_3|.
\end{align*}
Since $\lambda_1\preceq a\sqrt{\lambda_1}$ and $\phi\preceq \sqrt{\lambda_1}$ this is bounded by $\lr{\Lambda V,V}$ 
and hence
\begin{equation}
\label{eq:DdifD:A}
\big|\phi^{-N}\lr{(\Lambda^{-1}\dif_x\Lambda){\mathcal A}V,\Lambda V}\big|\preceq \phi^{-N-1}\lr{\Lambda V,V}.
\end{equation}
As for $\big|\phi^{-N}\lr{(\dif_x{\mathcal A})V,\Lambda V}\big|$ it follows  from Lemma \ref{lem:difTAT} that
\[
\Lambda(\dif_x{\mathcal A})=\begin{bmatrix}
O(\lambda_1)&O(\lambda_1/\sqrt{a})&O(\lambda_1)\\
O(a^{3/2})&O(a)&O(\sqrt{a})\\
O(a)&O(\sqrt{a})&O(\sqrt{a})
\end{bmatrix}
\]
hence repeating similar arguments it is easy to see that
\begin{equation}
\label{eq:DdifA}
\big|\phi^{-N}\lr{\Lambda(\dif_x{\mathcal A})V,V}\big|\preceq \phi^{-N-1}\lr{\Lambda V,V}
\end{equation}
which is  bounded by $N\phi^{-N-1}\lr{\Lambda V,V}$ taking $N$ large.  
  

%
\begin{remark}
\rm It shoud be remarked that the condition \eqref{eq:katei:D},  assumed in this section, are stated in terms of $\Delta$ and $a$, where $a$ is constant times the discriminant of $\dif p/\dif\tau$,  without any reference to characteristic roots $\tau_i$. 
\end{remark}

We conclude this section with an important remark. To obtain energy estimates it  suffices to find a finite number of pairs $(\phi_j,\omega_j)$, where $\phi_j$ is a scalar weight  satisfying   \eqref{eq:katei:D} in subregion $\omega_j$ of which union covers a neighborhood of $(0,0)$,  such that one can {\it collect} such estimates obtained in $\omega_j$. In the following sections this observation is carried out for hyperbolic operators with effectively hyperbolic critical points with coefficients depending on $t$, also for third order hyperbolic operators with effectively hyperbolic critical points with two independent variables.

\section{Hyperbolic operators with effectively hyperbolic critical points with time dependent coefficients}
\label{sec:app:tdep}

Ivrii and Petkov proved that if the Cauchy problem \eqref{eq:CPm} is $C^{\infty}$ well posed for any lower order term then the Hamilton map $H_p$ has a pair of  non-zero real eigenvalues at every critical point (\cite[Theorem 3]{IP}). 
Here the Hamilton map $F_p$ is defined by
\[
F_p(X,\Xi)=\begin{bmatrix}\displaystyle{\frac{\dif^2 p}{\dif X\dif\Xi}}&\displaystyle{\frac{\dif^2 p}{\dif \Xi\dif\Xi}}\\[10pt]
\displaystyle{-\frac{\dif^2 p}{\dif X\dif X}}&\displaystyle{-\frac{\dif^2 p}{\dif \Xi\dif X}}
\end{bmatrix},\quad X=(t,x),\;\;\Xi=(\tau,\xi)
\]
and $(X,\Xi)$ is a critical point if $\dif p/\dif X=\dif p/\dif \Xi=0$ at $(X,\Xi)$. Note that $p(X,\Xi)=0$ at critical points by the homogeneity in $\xi$ so that $(X,\Xi)$ is a multiple characteristic and $\tau$ is a multiple characteristic root. A critical point where the Hamilton map $H_p$ has a pair of non-zero real eigenvalues  is called {\it effectively hyperbolic} (\cite{Ho1}).  
In \cite{I}, Ivrii has proved that if  every critical point is effectively hyperbolic, and  $p$ admits a decomposition $p=q_1q_2$ with real smooth symbols  $q_i$ near the critical point then the Cauchy problem  is $C^{\infty}$ well-posed for every lower order term.  In this case the critical point is effectively hyperbolic  if and only if the Poisson bracket $\{q_1,q_2\}$ does not vanish. He has conjectured that the assertion would hold without any additional condition. 

If a critical point $(X,\Xi)$ is effectively hyperbolic then $\tau$ is a characteristic root of multiplicity at most $3$ (\cite[Lemma 8.1]{IP}). If every multiple characteristic root is at most double, the conjecture has been proved in \cite{I}, \cite{Mel}, \cite{Iwa1}, \cite{Iwa2}, \cite{Iwa3}, \cite{N4}, \cite{N5}.
For more details  about the conjecture and subsequent progress on several questions including the above conjecture, see \cite{BBP}, \cite{BeNi}, \cite{Ni:book}, \cite{NP}. 

Any characterisitc root $\tau$ of multiplicity $r\geq 4$ at $(t,x,\xi)$ with $t\geq 0$ and any characterisitc root $\tau$ of multiplicity $r\geq 3$ at $(t,x,\xi)$ with $t> 0$, the point $(t,x,\tau,\xi)$ is a critical point with $F_p(t,x,\tau,\xi) = O$. While any  characteristic root $\tau$ of multiplicity $3$ at $(0,x,\xi)$, the point $(0,x,\tau,\xi)$ is a critical point which may be effectively hyperbolic (\cite[Lemma 8.1]{IP}). Any double characteristic root $\tau$ at $(t,x,\xi)$ with $t>0$, the point $(t,x,\tau,\xi)$ is also a critical point which may be effectively hyperbolic while for double characteristic roots $\tau$ at  $(0,x,\xi)$, the characteristic points $(0,x,\tau,\xi)$  are not necessarily critical points. Here is a simple example in $\R^2$, $x\in\R$ and $t\geq 0$,
\[
P=(D_t^2-t^{\ell}D_x^2)(D_t+c\,D_x),\qquad \ell \in \N
\]
where $c\in\R$. Let $c\neq 0$ then it is clear that $\tau=0$ is a double characteristic root at $(0,0,1)$.  If $\ell=1$ then $\dif_tp(0,0,0,1)=-c\neq 0$ and hence $(0,0,0,1)$ is not a critical point. If $\ell\geq 2$ then $(0,0,0,1)$ is a critical point and $F_p$ has non-zero real eigenvalues there if and only if $\ell=2$. Let $c=0$ then $\tau=0$ is a triple characteristic root at $(0,0,1)$ so that $(0,0,0,1)$ is a critical point and $F_p$ has non-zero real eigenvalues there if and only if $\ell= 1$.


Now we restrict ourselves to the case that the coefficients depend only on $t$ and consider the Cauchy problem 
\begin{equation}
\label{eq:tdepCPm}
\begin{cases}\displaystyle{Pu=D_t^mu+\sum_{j=0}^{m-1}\sum_{j+|\alpha|\leq m}a_{j,\alpha}(t)D_x^{\alpha}D_t^ju=0},\;(t,x)\in [0,T)\times \R^n\\
D_t^ku(0,x)=u_k(x),\;\;x\in\R^n,\quad k=0,\ldots,m-1
\end{cases}
\end{equation}
where $a_{j,\alpha}(t)$ ($j+|\alpha|=m$) are real valued and  $C^{\infty}$ in $(-c,T)$ with some $c>0$ and the principal symbol $p$ is {\it hyperbolic for $t\geq 0$}, that is
\begin{equation}
\label{eq:ordm:p}
p(t,\tau,\xi)=\tau^m+\sum_{j=0}^{m-1}\sum_{j+|\alpha|=m}a_{j,\alpha}(t)\xi^{\alpha}\tau^j
\end{equation}
has only real roots in $\tau$ for any $(t,\xi)\in [0,T)\times \R^n$.  
\begin{theorem}
\label{thm:tdep:kyosoku}If every  critical point $(0,\tau,\xi)$,  $\xi\neq0$ is effectively hyperbolic then there exists $\delta>0$ such that for any $a_{j,\alpha}(t)$ with $j+|\alpha|\leq m-1$, which are $C^{\infty}$ in a neighborhood of $[0,\delta]$, the Cauchy problem \eqref{eq:tdepCPm} with $T=\delta$ is $C^{\infty}$ well-posed.
\end{theorem}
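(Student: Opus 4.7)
The plan is to reduce the $m$-th order Cauchy problem to a $\xi$-parameterized family of ODEs by partial Fourier transform in $x$ (valid since coefficients depend only on $t$) and then apply the weighted energy framework of Section~\ref{sec:enehyoka:1} block-by-block after a Weierstrass-type factorization of the principal symbol. It suffices to produce weighted energy estimates that are uniform in $\xi$; these yield $C^\infty$ well-posedness by standard Sobolev inversion.

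Fix a critical point $(0, \bar\tau, \bar\xi)$ with $|\bar\xi| = 1$. By the Ivrii-Petkov lemma recalled in the excerpt, the multiplicity $r$ of $\bar\tau$ is at most $3$. On a conic neighborhood of $(0, \bar\xi)$, write $p = q \tilde p$ with $q$ smooth real of degree $r$ containing $\bar\tau$ and $\tilde p$ with only simple roots near $\bar\tau$. The factor $\tilde p$ is strictly hyperbolic near $\bar\tau$ and handled by classical symmetrizers; $q$ of degree $1$ is trivial, and $q$ of degree $2$ (double effectively hyperbolic roots) is covered by the Iwasaki-Nishitani theory cited in the introduction. The substantive case is $r = 3$: after normalization, $q = \tau^3 - a(t,\xi)|\xi|^2 \tau - b(t,\xi)|\xi|^3$ with $a(0,\bar\xi) = b(0,\bar\xi) = 0$, and effective hyperbolicity of $(0, 0, \bar\xi)$ forces $\partial_t a(0, \bar\xi) > 0$ (hence $\partial_t^3 \Delta(0, \bar\xi) > 0$).

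The key step is to construct a scalar weight $\phi$ verifying \eqref{eq:katei:D} uniformly in $\xi$ on a conic neighborhood of $\bar\xi$. By the Malgrange preparation theorem, $a(t, \xi) = (t - t_0(\xi))\alpha(t, \xi)$ with $\alpha > 0$ and $t_0(\bar\xi) = 0$, $t_0(\xi) \leq 0$ (the latter forced by $a \geq 0$ on $t \geq 0$); likewise $\Delta(t, \xi)$ factors as a cubic in $t$ with positive leading coefficient, and $\Delta/a$ is then a nonnegative quadratic in $t$ bounded below by $c(t - t_1(\xi))^2$ with $t_1(\xi) \leq 0$. Take $\phi(t, \xi) = t - t_1(\xi)$; then $\partial_t \phi = 1$, and the three conditions of \eqref{eq:katei:D} follow by elementary comparison of these near-polynomial factorizations. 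Because coefficients depend only on $t$, the $\partial_x \phi$ contributions in Section~\ref{sec:enehyoka:1} drop out at the ODE level, so the computations of that section simplify to
\[
\frac{d}{dt}\bigl(\phi^{-N} \Lambda V, V\bigr) \leq -N \phi^{-N-1}(\Lambda V, V) + C \phi^{-N} \|F\|^2
\]
for $N$ large, uniformly in $\xi$; the lower-order terms $a_{j,\alpha}(t)$ with $j + |\alpha| \leq m-1$ contribute only $S^0$ perturbations absorbed into $C$.

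Finally, I would patch by compactness: the set of critical directions $\bar\xi \in \{|\xi|=1\}$ is closed, and off the corresponding conic neighborhoods $p$ is strictly hyperbolic, so standard energy estimates apply there. A finite microlocal partition of unity in $\xi$, combined with the block decomposition above, assembles the estimates; integrating the differential inequality from $t = \delta$ back to $t = 0$, with $\delta$ independent of $\xi$, yields the required $\xi$-uniform energy, hence $C^\infty$ well-posedness. The main obstacle I expect is verifying that the Malgrange factorization and the weight $\phi$ are genuinely uniform on a full conic neighborhood of each $\bar\xi$: the shifts $t_0(\xi), t_1(\xi)$ depend on $\xi$ through real roots of $a$ and $\Delta$, and one must track their smoothness and signs as $\xi$ varies, including transitions where the triple characteristic at $\bar\xi$ degenerates into lower-multiplicity configurations for nearby $\xi$. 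The non-degeneracy $\partial_t a(0, \bar\xi) > 0$ is precisely what keeps this tractable by the implicit function theorem.
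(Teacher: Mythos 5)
Your overall strategy matches the paper's: Fourier transform in $x$, factor $p$ microlocally into blocks by multiplicity, apply the Weierstrass/Malgrange preparation theorem to $a$ and $\Delta$, and run the diagonal-symmetrizer energy of Section~\ref{sec:enehyoka:1}. You also correctly identify the locus of the difficulty (root-tracking as $\xi$ varies). But there is a genuine gap at the central step: your claim that $\Delta/a$ ``is a nonnegative quadratic in $t$ bounded below by $c(t-t_1(\xi))^2$ with $t_1(\xi)\leq 0$'' is false in general, and a single scalar weight $\phi(t,\xi)=t-t_1(\xi)$ does not satisfy \eqref{eq:katei:D} uniformly.

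The obstruction is exactly the Tricomi structure emphasized in Remark~\ref{rem:tfu}. Under the hypotheses, $p$ \emph{must} have non-real roots for $t<0$; after Malgrange one gets $\Delta=e_2\bar\Delta$ with $\bar\Delta=t^3+a_1(\xi)t^2+a_2(\xi)t+a_3(\xi)$, and by Lemma~\ref{lem:Dkon} the generic branch is $\bar\Delta=|t-\nu_2(\xi)|^2\,(t-\nu_1(\xi))$ with $\nu_1\leq 0$ real but $\nu_2$ complex, where $\mathsf{Re}\,\nu_2(\xi)$ can be \emph{strictly positive}. In that case $\Delta/a$ has a strict interior minimum near $t=\mathsf{Re}\,\nu_2(\xi)>0$ of size about $(\mathsf{Im}\,\nu_2)^2$, which can be arbitrarily small compared to $(\mathsf{Re}\,\nu_2+|t_1|)^2$, so no weight vanishing at some $t_1\leq 0$ can be dominated by $\sqrt{\Delta/a}$ there. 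This is precisely why the paper's Proposition~\ref{pro:kihon} does not produce a single $\phi$: it defines $\psi(\xi)=\max\{0,\mathsf{Re}\,\nu_2(\xi)\}$ and produces \emph{two} weights $\phi_1=t$ on $\omega_1(\xi)=[0,\psi/2]$ and $\phi_2=t-\psi(\xi)$ on $\omega_2(\xi)=[\psi/2,\delta]$, each verifying \eqref{eq:katei:D} only on its own subinterval; the energy section then further splits into three subintervals $\Omega_1,\Omega_2,\Omega_3$ with $\varphi_1=t$, $\varphi_2=\psi-t$, $\varphi_3=t-\psi$ and patches the resulting estimates (Lemmas~\ref{lem:kukan:1}--\ref{lem:kukan:2}, Corollaries~\ref{cor:kukan:1}--\ref{cor:kukan:2}, following \cite{N1}, \cite{N3}). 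Your proposal also omits the quantitative comparison of Lemma~\ref{lem:alnu} (that some root $\nu_j$ dominates $\alpha(\xi)$), which is what makes $\phi_j^2 a\preceq\Delta$ hold with constants uniform in $\xi$. Without the interval partition and this lemma, the ``differential inequality from $t=\delta$ back to $t=0$'' you write down does not hold.
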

\begin{remark}
\label{rem:tfu} \rm Under the assumption of Theorem \ref{thm:tdep:kyosoku} it follows that $p$ has necessarily {\it non-real} characteristic roots in the $t<0$ side near $(0,\xi)$, that is $P$ would be a {\it Tricomi type} operator.  Indeed from \cite[Lemma 8.1]{IP} it follows  that $F_p(0,\tau,\xi)=O$ if all characteristic roots are real in a {\it full} neighborhood of $(0,\xi)$. 
\end{remark}
%

\subsection{Triple effectively hyperbolic critical points}
\label{sec:tdep}

Assume that $p(t,\tau,\xi)$ has a triple characteristic root  ${\bar\tau}$ at $(0,{\bar\xi})$, $|{\bar\xi}|=1$ and $(0,{\bar\tau},{\bar\xi})$ is effectively hyperbolic. As we see later, without restrictions one may assume that $m=3$ and 
$p$ has the form
\begin{equation}
\label{eq:sanji}
p(t,\tau,\xi)=\tau^3-a(t,\xi)|\xi|^2\tau-b(t,\xi)|\xi|^3
\end{equation}
where $a(t,\xi)$ and $b(t,\xi)$ are homogeneous of degree $0$ in $\xi$ and satisfy
\begin{equation}
\label{eq:cond:t:1}
\Delta(t,\xi)=4\,a(t,\xi)^3-27\,b(t,\xi)^2\geq 0, \quad (t,\xi) \in   [0,T) \times \R^n. 
\end{equation}
 The triple characteristic root of $p(0,\tau,{\bar\xi})=0$ is  $\tau=0$ and 
\[
{\rm det}\,\big(\lambda-F_p(0,0,{\bar\xi})\big)=\lambda^{2n}\big(\lambda^2-\{\dif_ta(0,{\bar\xi})\}^2\big)
\]
hence $(0,0,{\bar\xi})$ is effectively hyperbolic if and only if
\begin{equation}
\label{eq:difa}
\dif_t a(0,{\bar\xi})\neq 0.
\end{equation}
Since $a(0,{\bar\xi})=0$ and $\dif_ta(0,{\bar\xi})\neq 0$  there is a neighborhood ${\mathcal U}$ of $(0,{\bar\xi})$ in which one can write
\begin{equation}
\label{eq:Malg:a}
a(t,\xi)=e_1(t,\xi)(t+\alpha(\xi))
\end{equation}
where $e_1>0$ in ${\mathcal U}$. Note that $\alpha(\xi)\geq 0$ near ${\bar\xi}$ because $a(t,\xi)\geq 0$ in $[0,T)\times \R^n$.
\begin{lem}
\label{lem:DMal}There exists a neighborhood ${\mathcal U}$ of $(0,{\bar\xi})$ in which one can write
\[
\Delta(t,\xi)=e_2(t,\xi)\big\{t^3+a_1(\xi)t^2+a_2(\xi)t+a_3(\xi)\big\}
\]
where $e_2>0$ and $a_j({\bar\xi})=0$.
\end{lem}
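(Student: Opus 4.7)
The plan is to obtain the factorization as a direct application of the smooth (Malgrange) preparation theorem to $\Delta(t,\xi)$ in the $t$ variable at the point $(0,\bar\xi)$. What I must verify is that, as a function of $t$, $\Delta(t,\bar\xi)$ vanishes to order exactly $3$ at $t=0$; once that is established, Malgrange preparation produces a smooth non-vanishing $e_2$ and smooth coefficients $a_1,a_2,a_3$ of $\xi$ with $\Delta = e_2(t^3+a_1t^2+a_2t+a_3)$ near $(0,\bar\xi)$. The values $a_j(\bar\xi)=0$ are then forced by matching the expansion at $\xi=\bar\xi$, since there the parenthesized polynomial must equal a nonzero multiple of $t^3$, i.e.\ $t^3$ itself up to absorption into $e_2$.

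Concretely, I need to compute $\partial_t^k\Delta(0,\bar\xi)$ for $k=0,1,2,3$ and see that only the last is nonzero. Write $\Delta=4a^3-27b^2$ and differentiate, obtaining
\begin{align*}
\partial_t\Delta &= 12a^2\partial_ta - 54b\partial_tb,\\
\partial_t^2\Delta &= 24a(\partial_ta)^2 + 12a^2\partial_t^2 a - 54(\partial_tb)^2 - 54b\partial_t^2 b,\\
\partial_t^3\Delta &= 24(\partial_ta)^3 + 72a\,\partial_ta\,\partial_t^2 a + 12a^2\partial_t^3 a - 162\,\partial_tb\,\partial_t^2 b - 54b\,\partial_t^3 b.
\end{align*}
At $(0,\bar\xi)$ we have $a(0,\bar\xi)=0$ (by assumption) and $b(0,\bar\xi)=0$ (since $p(\tau,0,\bar\xi)$ has the triple root $\tau=0$, forcing both coefficients to vanish). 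Moreover Lemma \ref{lem:NiP} gives $|\partial_tb|\preceq\sqrt{a}$, hence $\partial_tb(0,\bar\xi)=0$. Substituting these zeros into the formulas above kills every term in $\Delta$, $\partial_t\Delta$ and $\partial_t^2\Delta$, and leaves only the first term in $\partial_t^3\Delta$:
\[
\partial_t^3\Delta(0,\bar\xi)=24\bigl(\partial_ta(0,\bar\xi)\bigr)^3.
\]
From the factorization \eqref{eq:Malg:a}, $\partial_ta(0,\bar\xi)=e_1(0,\bar\xi)>0$, so this quantity is strictly positive.

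Thus $\Delta(t,\bar\xi)$ has a zero of order exactly $3$ at $t=0$ with positive leading Taylor coefficient. By the Malgrange preparation theorem, there exists a neighborhood $\mathcal{U}$ of $(0,\bar\xi)$, a smooth function $e_2(t,\xi)$ with $e_2(0,\bar\xi)=\partial_t^3\Delta(0,\bar\xi)/6>0$, and smooth functions $a_j(\xi)$ such that
\[
\Delta(t,\xi)=e_2(t,\xi)\bigl\{t^3+a_1(\xi)t^2+a_2(\xi)t+a_3(\xi)\bigr\}
\]
on $\mathcal{U}$. Shrinking $\mathcal{U}$ we keep $e_2>0$. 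Finally, specializing to $\xi=\bar\xi$ the right-hand side becomes $e_2(t,\bar\xi)\bigl(t^3+a_1(\bar\xi)t^2+a_2(\bar\xi)t+a_3(\bar\xi)\bigr)$, which must equal $\Delta(t,\bar\xi)$; since both $\Delta(t,\bar\xi)$ and $e_2(t,\bar\xi)t^3$ have their first nonzero Taylor coefficient in degree $3$, and $e_2(t,\bar\xi)\neq 0$, we conclude $a_1(\bar\xi)=a_2(\bar\xi)=a_3(\bar\xi)=0$.

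The only non-routine point is the derivative calculation: the crucial cancellation is that $\partial_tb(0,\bar\xi)=0$, so that the $\partial_t^2\Delta$ and $\partial_t^3\Delta$ formulas are not polluted by the $b$-derivatives. This vanishing is not a direct consequence of $b(0,\bar\xi)=0$ but requires the hyperbolicity-driven bound $|\partial_tb|\preceq\sqrt{a}$ from Lemma \ref{lem:NiP}, which is the input that makes the order of vanishing exactly $3$ rather than merely at least $3$.
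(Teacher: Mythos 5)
Your proof is correct and follows the paper's overall strategy: reduce via Malgrange preparation to showing $\Delta(t,\bar\xi)$ vanishes to order exactly three at $t=0$, which hinges on the cancellation $\partial_tb(0,\bar\xi)=0$. Where you diverge is in how that cancellation is established. You cite Lemma~\ref{lem:NiP} ($|\partial_tb|\preceq\sqrt{a}$), let $t\to 0^{+}$ at $\xi=\bar\xi$ and use $a(0,\bar\xi)=0$, so the bound forces $\partial_tb(0,\bar\xi)=0$. The paper instead argues by contradiction: if $\partial_tb(0,\bar\xi)\neq 0$, then along $\xi=\bar\xi$ one has $a(t,\bar\xi)=ct$ and $b(t,\bar\xi)=b_1t+O(t^2)$ with $b_1\neq 0$, so $\Delta(t,\bar\xi)=4c^3t^3-27b(t,\bar\xi)^2<0$ for small $t>0$, contradicting $\Delta\geq 0$. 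Both arguments rest on the same feature of hyperbolicity (that $b$ degenerates faster than $a$); yours is slightly slicker in reusing a previously-proved estimate, while the paper's is more self-contained and avoids writing out $\partial_t\Delta,\partial_t^2\Delta,\partial_t^3\Delta$, simply observing that $a^3$ vanishes to order exactly $3$ and $b^2=O(t^4)$ once $b(0,\bar\xi)=\partial_tb(0,\bar\xi)=0$.

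One minor slip in your closing remark: the bound on $\partial_tb$ is what guarantees the order of vanishing is \emph{at least} $3$ (without it, $\partial_t^2\Delta(0,\bar\xi)=-54(\partial_tb(0,\bar\xi))^2<0$ and the order would only be $2$); it is the effective-hyperbolicity input $\partial_ta(0,\bar\xi)>0$ that pins it down to \emph{exactly} $3$ rather than higher.
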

\begin{proof}
Thanks to the Malgrange preparation theorem it suffices to show 
\[
\dif_t^k\Delta(0,{\bar\xi})=0,\;\;k=0,1,2,\quad \dif_t^3\Delta(0,{\bar\xi})\neq 0.
\]
It is clear that $\dif_t^ka^3=0$ at $(0,{\bar\xi})$ for $k=0,1,2$ and $\dif_t^3a(0,{\bar\xi})\neq 0$. Since $\Delta=4a^3-27b^2$ and $b(0,{\bar\xi})=0$ it is enough to show $\dif_tb(0,{\bar\xi})=0$. Suppose $\dif_tb(0,{\bar\xi})\neq0$ and hence
\[
b(t,{\bar\xi})=t\big(b_1+tb_2(t)\big)
\]
where $b_1\neq0$. Since $a(t,{\bar\xi})=c\,t$ with $c>0$ then $\Delta(t,{\bar\xi})=4\,c^3\,t^3-27b(t,{\bar\xi})^2\geq 0$ is impossible. This proves the assertion.
\end{proof}
\begin{lem}
\label{lem:alnu}There exist a neighborhood $U$ of ${\bar\xi}$ and a positive constant $\varep>0$ such that for any $\xi\in U$ one can find $j\in\{1,2,3\}$ such that
\[
\varep^{-1}\,|\nu_j(\xi)|\geq \alpha(\xi)
\]
where $\nu_j(\xi)$ are roots of $
t^3+a_1(\xi)t^2+a_2(\xi)t+a_3(\xi)=0$.
\end{lem}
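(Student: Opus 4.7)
I argue by contradiction. Suppose the conclusion fails; then there is a sequence $\xi_k\to\bar\xi$ with $\alpha_k:=\alpha(\xi_k)>0$ (so $\alpha_k\to 0$) and $\varepsilon_k:=\max_j|\nu_j(\xi_k)|/\alpha_k\to 0$. By Vieta's formulas applied to $P(t;\xi)=\prod_j(t-\nu_j(\xi))$, the coefficients satisfy $|a_j(\xi_k)|\le 3\varepsilon_k^j\alpha_k^j$ for $j=1,2,3$. The core idea is that if all $\nu_j$'s are negligible relative to $\alpha$, then the Taylor coefficients of $\Delta$ in $t$ at $t=0$ are negligible compared with those of $4a^3=4e_1^3(t+\alpha)^3$, so the identity $27b^2=4a^3-\Delta$ forces $b\sim(2/\sqrt{27})\,a^{3/2}$ to leading order. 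But $a$ vanishes simply at $t=-\alpha$, so $a^{3/2}$ is not $C^\infty$, and $b$ must fail to be smooth.

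Concretely, differentiate $27b^2=4a^3-\Delta$ in $t$ at $t=0$ up to order two. Using $\Delta=e_2(t^3+a_1t^2+a_2t+a_3)$ together with the Vieta bounds, the $k$-th $t$-derivative of $\Delta$ at $t=0$ is $O(\varepsilon_k^{3-k}\alpha_k^{3-k})$ for $k=0,1,2$, while the $k$-th derivative of $4e_1^3(t+\alpha)^3$ at $t=0$ is of exact order $\alpha_k^{3-k}$ with nonzero leading constant (using $e_1(0,\bar\xi)>0$ from the Malgrange factorization and $\partial_t a(0,\bar\xi)=e_1(0,\bar\xi)>0$ from effective hyperbolicity). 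Solving the three resulting identities in succession for $b(0,\xi_k)$, $\partial_t b(0,\xi_k)$, $\partial_t^2 b(0,\xi_k)$ yields the sharp asymptotics
\[
|b(0,\xi_k)|\sim \alpha_k^{3/2},\qquad |\partial_t b(0,\xi_k)|\sim \alpha_k^{1/2},\qquad |\partial_t^2 b(0,\xi_k)|\sim \alpha_k^{-1/2}.
\]
The first two are consistent with the smoothness of $b$ and with Lemma \ref{lem:NiP}, but the third forces $|\partial_t^2 b(0,\xi_k)|\to\infty$ as $k\to\infty$, contradicting the fact that $\partial_t^2 b(0,\xi)$ is a continuous function of $\xi$ near $\bar\xi$. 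This establishes the lemma.

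The main obstacle is verifying that the leading term survives in the second-derivative step. Writing $E_1=e_1(0,\bar\xi)$ and using $\partial_t a(0,\bar\xi)=E_1$, the second-derivative identity reads $54[b(0)\partial_t^2 b(0)+(\partial_t b(0))^2]=24E_1^3\alpha_k+o(\alpha_k)$. Since the first-derivative identity gives $(\partial_t b(0))^2=\tfrac{1}{3}E_1^3\alpha_k+o(\alpha_k)$, one obtains $54\,b(0)\partial_t^2 b(0)=6E_1^3\alpha_k+o(\alpha_k)$, where the crucial non-vanishing coefficient $24-18=6$ arises from the precise cancellation between the two sides, relying on the effectively hyperbolic assumption $\partial_t a(0,\bar\xi)\neq 0$. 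Combined with $|b(0)|\sim\alpha_k^{3/2}$, this gives the blow-up $|\partial_t^2 b(0,\xi_k)|\sim \alpha_k^{-1/2}$ needed for the contradiction.
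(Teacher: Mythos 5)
Your proof is correct and uses essentially the same mechanism as the paper: differentiate the hyperbolicity identity $27b^2 = 4a^3 - \Delta$ (equivalently $(t+\alpha)^3 - \hat b^2 = E\bar\Delta$) twice in $t$ at $t=0$, use the Vieta-type smallness of the coefficients of $\bar\Delta$ when all $|\nu_j|\ll\alpha$, and conclude that $\partial_t^2 b(0,\cdot)$ would have to be of size $\alpha^{-1/2}$, which contradicts the boundedness of a smooth coefficient. The paper packages this quantitatively (introducing $\hat b = 3\sqrt 3\,b/(2e_1^{3/2})$, fixing explicit $\varepsilon$ via \eqref{eq:erabu} and a neighborhood with $4B\sqrt{\alpha}<1$) whereas you run a sequential limiting argument, but the underlying computation --- including the decisive $24-18=6$ cancellation at second order --- is the same.
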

\begin{proof}In view of Lemma \ref{lem:DMal} one can write
\begin{align*}
\Delta=4a^3-27b^2=4e_1^3(t+\alpha)^3-27b^3=4e_1^3\big\{(t+\alpha)^3-{\hat b}^2\big\}\\
=e_2\big\{t^3+a_1(\xi)t^2+a_2(\xi)t+a_3(\xi)\big\}
\end{align*}
where ${\hat b}=3\sqrt{3}\,b/(2e_1^{3/2})$ and hence
\[
(t+\alpha)^3-{\hat b}^2=E\big\{t^3+a_1(\xi)t^2+a_2(\xi)t+a_3(\xi)\big\}
\]
with $E=e_2/(4e_1^3)$. Choose $I=[-\delta_1,\delta_1]$, $\delta_1>0$ and a neighborhood $U_1$ of ${\bar\xi}$ such that $I\times U_1\subset {\mathcal U}$ and denote
\[
\sup_{(t,\xi)\in I\times U_1, 0\leq k\leq 2}\big|\dif_t^kE(t,\xi)\big|=C.
\]
Write $
{\hat b}(t,\xi)={\hat b}_0(\xi)+{\hat b}_1(\xi)t+{\hat b}_2(\xi)t^2+{\hat b}_3(t,\xi)t^3$ 
and set
\begin{equation}
\label{eq:bhat}
\sup_{\xi\in U_1}\alpha(\xi)+\sup_{\xi\in U_1,0\leq k\leq 2}|{\hat b}_k(\xi)|+\sup_{(t,\xi)\in I\times U_1}|{\hat b}_3(t,\xi)|=B.
\end{equation}
Choose a neighborhood $U\subset U_1$ of ${\bar\xi}$ such that
\[
4\,B\sqrt{\alpha(\xi)}<1\quad \text{for}\;\;\xi\in U
\]
which is possible because $\alpha({\bar\xi})=0$. Choose $\varep=\varep(B,C)>0$ such that
\begin{equation}
\label{eq:erabu}
6\big(1-\varep C(1+\varep B+\varep^2B^2/6)\big)\\
-\frac{9}{2}\big(1+\varep^2C(1+\varep B)\big)^2/(1-\varep^3 C)>1
\end{equation}
and prove that if there exists $\xi\in U$ such that
\begin{equation}
\label{eq:uso}
\big|\nu_j(\xi)\big|<\epsilon\,\alpha(\xi),\quad j=1,2,3
\end{equation}
we would have a contradiction. We omit to write $\xi$ for simplicity. Recall
\begin{equation}
\label{eq:nu:1}
(t+\alpha)^3-{\hat b}^2=E\prod(t-\nu_j)\geq 0\quad (t\geq 0)
\end{equation}
and hence, taking $t=0$ one has $\alpha^3-{\hat b}_0^2=E(0)|\nu_1\nu_2\nu_3|<C\varep^3\alpha^3$. 
This shows
\begin{equation}
\label{eq:nu:2}
\sqrt{1-C\varep^3}\,\alpha^{3/2}\leq |{\hat b}_0|\leq \alpha^{3/2}.
\end{equation}
Differentiating \eqref{eq:nu:1} by $t$ and putting $t=0$ one has
\[
|3\alpha^2-2{\hat b}_0{\hat b}_1|\leq C\varep^3\alpha^3+3C\varep^2\alpha^2.
\]
This gives 
\[
3\alpha^2\big(1-C\varep^2(1+\varep B)\big)\leq 2|{\hat b}_0{\hat b}_1|\leq 3\alpha^2\big(1+C\varep^2(1+\varep B)\big).
\]
In view of \eqref{eq:nu:2} one has
\begin{equation}
\label{eq:nu:3}
\frac{3}{2}\alpha^{1/2}\big(1-C\varep^2(1+\varep B)\big)\leq |{\hat b}_1|\leq \frac{3}{2}\frac{\alpha^{1/2}}{\sqrt{1-C\varep^3}}\big(1+C\varep^2(1+\varep B)\big).
\end{equation}
Differentiating \eqref{eq:nu:1} twice by $t$ and putting $t=0$ on has
\[
\big|6\alpha-(2{\hat b}_1^2+4{\hat b}_0{\hat b}_2)\big|\leq C\,\varep\,\alpha(6+6\varep\,B+\varep^2\,B^2)
\]
which proves $
\big|4{\hat b}_0{\hat b}_2+2{\hat b}_1^2\big|\geq 6\,\alpha\big(1-\varep C-\varep^2CB-\varep^3CB^2/6\big)$. 
Using \eqref{eq:nu:3} one obtains
\begin{align*}
|4{\hat b}_0{\hat b}_2|\geq 6\alpha\big(1-\varep C(1+\varep B+\varep^2B^2/6)\big)\\
-\frac{9}{2}\alpha\big(1+\varep^2C(1+\varep B)\big)^2/(1-\varep^3 C)
\end{align*}
where the right-hand side is greater than $\alpha$ by \eqref{eq:erabu}. On the other hand from \eqref{eq:bhat} and  \eqref{eq:nu:2} we have 
\[
4B\,\alpha^{3/2}\geq 4\,\alpha^{3/2}\,|{\hat b}_2|\geq  4\,|{\hat b}_0{\hat b}_1|> \alpha
\]
and hence $4\,B\sqrt{\alpha}> 1$ which contradicts with
\eqref{eq:uso}.
\end{proof}
Denote $\Delta/e_2$ by ${\bar \Delta}$;
\[
{\bar \Delta}(t,\xi)=\Delta/e_2=t^3+a_1(\xi)t^2+a_2(\xi)t+a_3(\xi).
\]
\begin{lem}
\label{lem:Dkon}There is a neighborhood  $V\subset U$ of ${\bar\xi}$ where
one can write either
\begin{equation}
\label{eq:ba:1}
{\bar\Delta}=\big|t-\nu_2(\xi)\big|^2\big(t-\nu_1(\xi)\big),\quad \nu_1(\xi)\;\;\text{is real and}\;\; \nu_1(\xi)\leq 0 
\end{equation}
or
\begin{equation}
\label{eq:ba:2}
{\bar\Delta}=\prod_{k=1}^3(t-\nu_k(\xi)\big),\quad \nu_k(\xi)\;\;\text{are real and}\;\; \nu_k(\xi)\leq 0.
\end{equation}
\end{lem}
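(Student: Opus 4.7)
The plan is to analyze the root structure of the real-coefficient cubic $\bar\Delta(\cdot,\xi)$ in $t$ directly, using the sign condition $\bar\Delta \geq 0$ on $[0,T)$ together with the smallness of the roots near $\bar\xi$. Since $a_j(\bar\xi)=0$, by continuity for $\xi$ in a small enough neighborhood $V \subset U$ all three roots of $\bar\Delta(\cdot,\xi)$ lie in $(-T,T)$. One then splits into two cases according to whether the three roots are all real or there is a pair of non-real complex conjugate roots.

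First I would treat the case of a pair of non-real complex conjugate roots. Then $\bar\Delta$ factors as $(t-\nu_1)(t-\nu_2)(t-\overline{\nu_2})$ with $\nu_1 \in \mathbb{R}$ and $\nu_2\notin\mathbb{R}$, and for real $t$ this equals
\[
\bar\Delta(t,\xi) = (t - \nu_1(\xi))\,|t - \nu_2(\xi)|^2.
\]
Since $|t-\nu_2|^2>0$ for all real $t$, the nonnegativity of $\bar\Delta$ on $[0,T)$ immediately forces $\nu_1 \leq 0$, giving the representation \eqref{eq:ba:1}.

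Next I would handle the case of three real roots $\nu_1\le\nu_2\le\nu_3$. The monic cubic is then nonnegative precisely on $[\nu_1,\nu_2]\cup[\nu_3,\infty)$. If $\nu_3>0$ and $\nu_2<\nu_3$, the interval $(\nu_2,\nu_3)\cap[0,T)$ is nonempty (since $\nu_3<T$) and $\bar\Delta<0$ there, contradicting hyperbolicity. If $\nu_3>0$ but $\nu_2=\nu_3$, then $\bar\Delta=(t-\nu_1)(t-\nu_3)^2$ and we are back in the form \eqref{eq:ba:1} with $\nu_2:=\nu_3\in\mathbb{R}$ and $\nu_1\le 0$ (the same sign argument applies, since $(t-\nu_3)^2>0$ away from a single point). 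Otherwise $\nu_3\le 0$, hence $\nu_1\le\nu_2\le\nu_3\le 0$ and we obtain \eqref{eq:ba:2}.

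The proof is essentially elementary sign analysis on a real cubic, and no substantial obstacle is anticipated. The only point that requires a little care is the borderline situation of a positive double root coinciding with a simple nonpositive root, which the flexible formulation of \eqref{eq:ba:1} (placing no complexity requirement on $\nu_2$) absorbs without issue.
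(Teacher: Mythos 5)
Your argument is correct and follows essentially the same route as the paper's own proof: shrink the neighborhood so that the roots of $\bar\Delta(\cdot,\xi)$ are small, then split into the complex-conjugate-pair case and the all-real case, and read off nonpositivity from $\bar\Delta\geq 0$ on $[0,\delta_1]$. The only difference is that you spell out the sign analysis for three real roots (ruling out $\nu_2<\nu_3$ with $\nu_3>0$, and handling the positive double root by absorbing it into the form \eqref{eq:ba:1}), whereas the paper states this more tersely; the substance is identical.
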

\begin{proof}Let $\nu_j(\xi)$, $j=1,2,3$ be the roots of ${\bar \Delta}(t,\xi)=0$. Since $\nu_j({\bar\xi})=0$ one can assume $|\nu_j(\xi)|<\delta_1$ in $V$. Since $a_j(\xi)$ are real we have two cases; one is real and other two are complex conjugate or all three are real. For the former case denoting the real root by $\nu_1(\xi)$ we have \eqref{eq:ba:1} where $\nu_1(\xi)\leq 0$ because ${\bar \Delta}\geq 0$ for $0\leq t\leq \delta_1$. In the latter case, if two of them coincide, denoting the remaining one by $\nu_1(\xi)$ one has \eqref{eq:ba:1}. If $\nu_j(\xi)$ are different each other then we have \eqref{eq:ba:2}  since ${\bar\Delta}\geq 0$ for $0\leq t\leq \delta_1$.
\end{proof}
%

\subsection{Key proposition}

Thanks to Lemma \ref{lem:Dkon} we have either \eqref{eq:ba:1} or \eqref{eq:ba:2}. As was observed in \cite{N1} (see also \cite{N3}) in order to obtain energy estimates it is important to consider not only real zeros of $\Delta$ but also ${\mathsf{Re}}\,\nu_j(\xi)$, the real part of $\nu_j(\xi)$ for non-real zeros. Define
\begin{align*}
&\psi(\xi)=\max{\big\{0,{\mathsf{Re}}\,\nu_j(\xi)\big\}}=\max{\big\{0,{\mathsf{Re}}\,\nu_2(\xi)\big\}},\\
&\phi_1=t, \;\;\omega_1(\xi)=[0,\psi(\xi)/2],\quad
\phi_2=t-\psi(\xi),\;\;\omega_2(\xi)=[\psi(\xi)/2,\delta] 
\end{align*}
with small $\delta>0$. 
If $\psi(\xi)\leq 0$,  we have $\phi_1=\phi_2=t$ and $\omega_2=[0,\delta]$.
The next proposition is the key to applying the arguments in Section \ref{sec:enehyoka:1} to operators with triple effectively hyperbolic characteristics.
\begin{prop}
\label{pro:kihon}There exist a neighborhood $U$ of  ${\bar\xi}$, positive constants $\delta>0$ and $C>0$ such that 
\begin{equation}
\label{eq:kihon:a}
\phi_j^2\,a\leq C\Delta,\quad |\phi_j|\,|\dif_t\Delta|\leq C\Delta,\quad |\phi_j|\leq C\,a
\end{equation}
for any $\xi\in U$ and $t\in \omega_j(\xi)$, $j=1,2$.
\end{prop}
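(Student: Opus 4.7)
The plan is to reduce the three bounds of \eqref{eq:kihon:a} to analogues with $a$ replaced by $t+\alpha(\xi)$ and $\Delta$ by $\bar\Delta$, which is valid since $e_1, e_2$ are smooth and bounded above and below by positive constants in a small neighborhood of $(0,\bar\xi)$. The third inequality $|\phi_j| \leq Ca$ is then immediate: on $\omega_1$, $\phi_1 = t \leq t + \alpha$; on $\omega_2$, $t \geq \psi/2$ forces $|t - \psi| \leq t$ (split $t \geq \psi$ versus $\psi/2 \leq t < \psi$). For the second inequality it suffices to prove $|\phi_j||\partial_t \bar\Delta| \leq C \bar\Delta$. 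In Case A of Lemma~\ref{lem:Dkon}, with $\nu_2 = r + i s$ and $\bar\Delta = ((t-r)^2 + s^2)(t - \nu_1)$, one computes $\partial_t \bar\Delta = 2(t-r)(t-\nu_1) + (t-r)^2 + s^2$; on $\omega_1$ the bound $t \leq r - t$ gives $2t|t-r|(t-\nu_1) \leq 2(t-r)^2(t-\nu_1) \leq 2\bar\Delta$, and $t((t-r)^2+s^2) \leq \bar\Delta$ follows from $t \leq t - \nu_1$; on $\omega_2$, $|t-r| \leq C(t-\nu_1)$ since $t-\nu_1 \geq t \geq r/2$. Case B is easier via $\partial_t \bar\Delta/\bar\Delta = \sum_k 1/(t-\nu_k) \leq 3/t$.

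The substantive step is the first inequality $\phi_j^2(t+\alpha) \leq C \bar\Delta$, which I would split as $\phi_j^2 t + \phi_j^2 \alpha$. For the $\phi_j^2 t$ piece, note that $\phi_j^2 \leq (t-r)^2 + s^2$ on both regions (using $t \leq r - t$ on $\omega_1$; trivially on $\omega_2$), so combined with $t \leq t - \nu_1$ this yields $\phi_j^2 t \leq \bar\Delta$; in Case B the analogous bound follows from $\bar\Delta \geq t^3$. The $\phi_j^2 \alpha$ piece is where Lemma~\ref{lem:alnu} enters: after shrinking $U$, at least one root $\nu_k(\xi)$ satisfies $|\nu_k| \geq \varepsilon \alpha$ for a fixed $\varepsilon > 0$.

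If $|\nu_1| \geq \varepsilon \alpha$, then $t - \nu_1 \geq \varepsilon\alpha$ and the bound $\phi_j^2 \leq (t-r)^2 + s^2$ gives $\bar\Delta \geq \varepsilon\alpha \phi_j^2$. Otherwise $|\nu_2|^2 = r^2 + s^2 \geq \varepsilon^2 \alpha^2$, and I would further split into (i) $r \geq \varepsilon\alpha/\sqrt{2}$ and (ii) $s \geq \varepsilon\alpha/\sqrt{2}$. In (i) on $\omega_2$, $t \geq r/2$ forces $\alpha \leq C t$, so $\phi_2^2 \alpha \leq C \phi_2^2 t \leq C \bar\Delta$; on $\omega_1$, the estimate $\bar\Delta \geq (r/2)^2 t$ combined with $t\alpha \leq (r/2)\alpha \leq C r^2$ (since $\alpha \leq \sqrt 2\, r/\varepsilon$) closes it. In (ii), the bound $\bar\Delta \geq s^2 t \geq (\varepsilon^2/2)\alpha^2 t$ is available, and on $\omega_1$ an AM--GM inequality $t\alpha \leq C(t^2 + s^2)$ (exploiting $\alpha^2 \leq 2 s^2/\varepsilon^2$) finishes the argument.

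The main obstacle is sub-case (ii) on $\omega_2$ in the regime $\alpha > 2t$: there $r$ may be much smaller than $\alpha$ while $\omega_2$ begins already at $t = r/2$, so $\phi_2^2\alpha$ cannot be controlled by $\phi_2^2 t$. The key is to exploit $r \leq 2t$ on $\omega_2$ to obtain $(t-r)^2 \leq 4t^2$, whence $\phi_2^2 \alpha \leq 4t^2 \alpha \leq 2t \alpha^2 \leq (4/\varepsilon^2) \bar\Delta$, where the middle step uses $\alpha > 2t$ and the last uses $\bar\Delta \geq (\varepsilon^2/2)\alpha^2 t$. Case B of Lemma~\ref{lem:Dkon} fits the same template with $\omega_1 = \{0\}$ trivial and $\bar\Delta \geq t^3$, so the analogous split on which $|\nu_k|$ witnesses Lemma~\ref{lem:alnu} handles it.
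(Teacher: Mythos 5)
Your proof is correct and follows the paper's overall strategy (reduce via the elliptic factors $e_1,e_2$ to ${\bar\Delta}$ and $t+\alpha$, dispatch the second and third bounds by factor-by-factor comparison with $\phi_1,\phi_2$, and invoke Lemma \ref{lem:alnu} for the first), but the hardest step $\phi_j^2(t+\alpha)\leq C{\bar\Delta}$ is organized differently. The paper estimates the ratio ${\bar\Delta}/(|\phi_j|(t+\alpha))$ directly: in each sub-case it identifies a single factor of ${\bar\Delta}$ dominating $t+\alpha$, namely $t-\nu_1$ when $|\nu_1|\gtrsim\alpha$, $|t-\nu_2|$ when ${\mathsf{Re}}\,\nu_2\leq|{\mathsf{Im}}\,\nu_2|$, and $|t-{\mathsf{Re}}\,\nu_2|$ on $\omega_1$ (resp.\ $t$ on $\omega_2$) when ${\mathsf{Re}}\,\nu_2>|{\mathsf{Im}}\,\nu_2|$. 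You instead split $t+\alpha$ additively: the $\phi_j^2\,t$ part is absorbed by ${\bar\Delta}$ uniformly (a clean observation the paper does not isolate), and only the $\phi_j^2\,\alpha$ part needs Lemma \ref{lem:alnu}, which you close with a second-level Pythagorean split ($|r|\gtrsim\alpha$ versus $|s|\gtrsim\alpha$) together with AM--GM and a further $\alpha\lessgtr 2t$ dichotomy on $\omega_2$. Both routes work; yours localizes the use of Lemma \ref{lem:alnu} more cleanly at the cost of a few more sub-cases, while the paper's is shorter per case by comparing $t+\alpha$ as a single unit. One small correction: in your sub-case (i) you should write $|r|\geq\varepsilon\alpha/\sqrt 2$, not $r\geq\varepsilon\alpha/\sqrt 2$; when $r<0$ one has $\psi=0$, $\omega_1=\{0\}$, $\phi_2=t$, and the bound then follows from ${\bar\Delta}\geq t^3$ and $(t-r)^2\geq r^2$ exactly as in your Case B remark, so this is a wording slip rather than a real gap.
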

\begin{proof} Thanks to \eqref{eq:Malg:a} and Lemma \ref{lem:DMal} it suffices to prove  \eqref{eq:kihon:a} for ${\bar \Delta}$ and $t+\alpha$ instead of $\Delta$ and $a$. 
Note that
\begin{align*}
&\Big|\frac{\dif_t{\bar\Delta}}{\bar\Delta}\Big|\leq\frac{1}{t+|\nu_1(\xi)|}+2\,\frac{|t-{\mathsf{Re}}\,\nu_2(\xi)|}{|t-\nu_2(\xi)|^2}\leq \frac{1}{\phi_1}+\frac{2}{|\phi_2|},\\
&\Big|\frac{\dif_t{\bar\Delta}}{\bar\Delta}\Big|\leq\sum_{k=1}^3\frac{1}{t+|\nu_k(\xi)|}\leq \frac{3}{\phi_1}
\end{align*}
in the case \eqref{eq:ba:1} and \eqref{eq:ba:2} respectively. Since $|\phi_2|\geq t=\phi_1$ in $\omega_1(\xi)$ and $t=\phi_1\geq |\phi_2|$ in $\omega_2(\xi)$ it is easy to see 
\[
|\phi_j|\,|\dif_t{\bar\Delta}|\leq 3\,{\bar \Delta}\quad \text{in}\;\;\omega_j(\xi)
\]
for both cases. Similarly noting  $t+\alpha(\xi)\geq t$ it is clear that
\[
|\phi_j|\leq t+\alpha \quad \text{in}\;\;\omega_j(\xi).
\]
Therefore it rests to prove $\phi_j^2\,(t+\alpha)\leq C{\bar\Delta}$ in $\omega_j(\xi)$. First we study the case \eqref{eq:ba:1}. From Lemma \ref{lem:alnu} either $\varep^{-1}|\nu_1|\geq \alpha $ or $\varep^{-1}|\nu_2|\geq \alpha$ holds. First assume that $\varep^{-1}|\nu_1|\geq \alpha$  and hence $t+|\nu_1|\geq \varep(t+\alpha)$ then
\[
\frac{{\bar\Delta}}{|\phi_j|(t+\alpha)}\geq \varep\,\frac{|t-\nu_2|^2}{|\phi_j|}\geq \varep\,\frac{|t-{\mathsf{Re}}\,\nu_2|^2}{|\phi_j|}=\varep\,\frac{\phi_2^2}{|\phi_j|}\geq \varep |\phi_j|,\quad t\in\omega_j.
\]
Next assume $\varep^{-1}|\nu_2|\geq \alpha$. If $0<{\mathsf{Re}}\,\nu_2\leq |{\mathsf{Im}}\,\nu_2|$ one has for $t\geq 0$ 
\begin{align*}
&| t-{\mathsf{Re}}\,\nu_2|+|{\mathsf{Im}}\,\nu_2|\geq t-{\mathsf{Re}}\,\nu_2+|{\mathsf{Im}}\,\nu_2|\geq t,\\
&| t-{\mathsf{Re}}\,\nu_2|+|{\mathsf{Im}}\,\nu_2|\geq |{\mathsf{Im}}\,\nu_2|\geq  |\nu_2|/2\geq \frac{\varep}{2}\alpha
\end{align*}
which also holds for ${\mathsf{Re}}\,\nu_2\leq 0$ clearly. Then we see that
\[
|t-\nu_2|\geq \frac{1}{2}\big(| t-{\mathsf{Re}}\,\nu_2|+|{\mathsf{Im}}\,\nu_2|\big)\geq \frac{\varep}{2(2+\varep)}(t+\alpha).
\]
Therefore it follows that
\[
\frac{{\bar\Delta}}{|\phi_j|(t+\alpha)}\geq \frac{\varep}{2(2+\varep)}\,\frac{t\,|t-\nu_2|}{|\phi_j|}\geq \frac{\varep}{2(2+\varep)}\,\frac{|\phi_1\phi_2|}{|\phi_j|}\geq \frac{\varep}{2(2+\varep)}|\phi_j|,\quad t\in\omega_j.
\]
If ${\mathsf{Re}}\,\nu_2>|{\mathsf{Im}}\,\nu_2|$ noting that, for $ t\in\omega_1$ 
\[
|t-{\mathsf{Re}}\,\nu_2|\geq {\mathsf{Re}}\,\nu_2/2\geq |\nu_2|/4\geq \varep\alpha/4,\quad |t-{\mathsf{Re}}\,\nu_2|\geq t
\]
one has $(4+\varep)|t-{\mathsf{Re}}\,\nu_2|\geq \varep(t+\alpha)$ in $\omega_1$. Hence
\[
\frac{{\bar\Delta}}{t(t+\alpha)}\geq \frac{|t-{\mathsf{Re}}\,\nu_2|^2}{t+\alpha}\geq \frac{\varep}{4+\varep}\,|t-{\mathsf{Re}}\,\nu_2|\geq \frac{\varep}{4+\varep}\,t,\quad t\in\omega_1.
\]
For $t\in\omega_2$ note that 
\[
t\geq {\mathsf{Re}}\,\nu_2/2\geq |\nu_2|/4\geq \varep\alpha/4
\]
and hence $(4+\varep)\,t\geq \varep(t+\alpha)$. Thus one has
\[
\frac{{\bar\Delta}}{|t-{\mathsf{Re}}\,\nu_2|(t+\alpha)}\geq \frac{t\,|t-{\mathsf{Re}}\,\nu_2|}{(t+\alpha)}\geq \frac{\varep}{4+\varep}|t-{\mathsf{Re}}\,\nu_2|,\quad t\in\omega_2
\]
which proves the assertion for the case \eqref{eq:ba:1}.  In the case \eqref{eq:ba:2} note that
\begin{align*}
{\bar\Delta}=\prod_{k=1}^3\big(t+|\nu_k|\big).
\end{align*}
If $\varep^{-1}|\nu_j|\geq \alpha$ then $t+|\nu_j|\geq \varep(t+\alpha)$ and hence it is clear that
\[
{\bar\Delta}\geq \varep\,t^2\,(t+\alpha)
\]
which shows the assertion. Thus the proof of Proposition \ref{pro:kihon} is completed.
\end{proof}
%

\subsection{Energy estimates}
\label{sec:ene:tdep}

Let $P$ be a differential operator of order $3$ with coefficients depending on $t$. 
After Fourier transform in $x$ the equation $Pu=f$ reduces to
\begin{equation}
\label{eq:eqFT}
D_t^3{\hat u}+\sum_{j+|\alpha|\leq 3, j\leq 2}a_{j,\alpha}(t)\xi^{\alpha}D_t^j{\hat u}={\hat f}
\end{equation}
where ${\hat u}(t,\xi)$ stands for the Fourier transform of $u(t,x)$ with respect to $x$. With
\[
E(t,\xi)=\exp{\Big(\frac{i}{3}\int_0^t \sum_{|\alpha|=1}a_{2,\alpha}(s)\xi^{\alpha}\,ds\Big)}
\]
it is clear that  ${\hat v}=E(t,\xi){\hat u}$ satisfies
\begin{equation}
\label{eq:eqFT:b}
D_t^3{\hat v}-a(t,\xi)|\xi|^2D_t{\hat v}-b(t,\xi)|\xi|^3{\hat v}
+\sum_{j=1}^3b_j(t,\xi)|\xi|^{j-1}D_t^{3-j}{\hat v}=E{\hat f}
\end{equation}
where $b_j(t,\xi)=0$ for $|\xi|\leq 1$ can be assumed since energy estimates for $|\xi|\leq 1$ is easily obtained. Since
\[
\sum_{k=0}^{\ell}\big|(|\xi|+1)^{\ell-k}\dif_t^k{\hat u}(t)\big|^2\leq C_{\ell}\sum_{k=0}^{\ell}\big|(|\xi|+1)^{\ell-k}\dif_t^k{\hat v}(t)\big|^2
\]
in order to obtain energy estimates for ${\hat u}$ one can assume that ${\hat u}$ satisfies \eqref{eq:eqFT:b} from the beginning. With $U={^t}\big(D_t^2{\hat u},|\xi|D_t{\hat u}, |\xi|^2{\hat u}\big)$ the equation \eqref{eq:eqFT:b} can be written
\begin{equation}
\label{eq:eqU}
\begin{split}
\frac{\dif}{\dif t}U&=i\begin{bmatrix}0&a(t,\xi)&b(t,\xi)\\
                                    1&0&0\\
                                    0&1&0\\
                                    \end{bmatrix}\!|\xi|\,U\\[3pt]
                                    &+i\begin{bmatrix}
                                    b_1(t,\xi)&b_2(t,\xi)&b_3(t,\xi)\\
                                    0&0&0\\
                                    0&0&0\\
                                    \end{bmatrix}U+\begin{bmatrix}iE{\hat f}\;\\
                                    0\\
                                    0\\
                                    \end{bmatrix}
                                   \\[5pt]
                                   &=iA|\xi|U+BU+F.
                                    \end{split}
\end{equation}
Let $S(t,\xi)$ and $T(t,\xi)$ be defined in Section \ref{sec:DiaSym} with $X=\xi$ such that $T^{-1}ST=\Lambda={\rm diag}\,(\lambda_1,\lambda_2,\lambda_3)$.   With $V=T^{-1}U$ one has
\begin{align*}
\dif_tV&=iA^T|\xi|V+\big(B^T+(\dif_tT^{-1})T\big)V+T^{-1}F\\
&=i{\mathcal A}|\xi|V+{\mathcal B}V+{\tilde F}
\end{align*}
 where ${\mathcal A}=T^{-1}AT$ and ${\mathcal B}=T^{-1}BT+ (\dif_tT^{-1})T$.    
 Thanks to Proposition \ref{pro:kihon} we have candidates for scalar weights in each $\omega_j$. To simplify notation denote
\[
t_0(\xi)=0,\quad t_1(\xi)=\psi(\xi)/2,\quad t_2(\xi)=\psi(\xi),\quad t_3(\xi)=\delta
\]
and following  \cite{N1} (also \cite{N3})  introduce three subintervals $\Omega_j=[t_{j-1}(\xi),t_j(\xi)]$ and scalar weights $\varphi_j$, $j=1,2,3$
\begin{align*}
\varphi_1(t,\xi)=t,\quad \varphi_2(t,\xi)=\psi(\xi)-t,\quad \varphi_3(\xi)=t-\psi(\xi).
\end{align*}
Note that $\omega_1=\Omega_1$, $\omega_2=\Omega_2\cup\Omega_3$ and $\varphi_j=|\phi_2|$ in $\Omega_j$, $j=2,3$. Thanks to Proposition \ref{pro:kihon} and Lemma \ref{lem:katei:D} one has
 \begin{equation}
 \label{eq:katei:2b}
 \varphi_j^2\leq C\lambda_1,\;\;\varphi_j|\dif_t\lambda_1|\leq C\lambda_1,\;\;\varphi_j\leq C\lambda_2,\quad t\in \Omega_j(\xi),\quad j=1,2,3
 \end{equation}
 where $C$ is independent of $\xi\in U$. Consider the following  energy in $\Omega_j(\xi)$;
 \[
{\mathcal E}_j=g_j\lr{\Lambda V,V}=g_j\sum_{k=1}^3\lambda_k(t,\xi)|V_k(t,\xi)|^2,\quad g_j=\varphi_j(t,\xi)^{2(-1)^jN-1}.
 \]
Since ${\mathsf{Re}}\,\lr{i\Lambda{\mathcal A}|\xi|V,V}=0$ and $\dif_t\varphi_j=(-1)^{j-1}$ one has
 \begin{align*}
 \frac{d}{dt}{\mathcal E}_j=&-(2N-(-1)^j)\varphi_j^{-1}{\mathcal E}_j+g_j\lr{(\dif_t\Lambda)V,V}\\
 &+2g_j{\mathsf{Re}}\,\lr{\Lambda{\mathcal B}V,V} +2g_j{\mathsf{Re}}\,\lr{\Lambda{\tilde F},V}.
 \end{align*}
 Repeating the same arguments as in Section \ref{sec:enehyoka:1} one can estimate
 \[
 \big|g_j\lr{\Lambda(\dif_tT^{-1})TV,V}\big|+
\big|g_j\lr{(\dif_t\Lambda)V,V}\big|\leq C_1\varphi_j^{-1}{\mathcal E}_j
\]
 in $\Omega_j$. It rests to estimate $|g_j\lr{\Lambda B^TV,V}|$. 
Let $C'$ be a bound of all entries of $B^T$. Since $0\leq \lambda_1\leq \lambda_2\leq \lambda_3$ then $|g_j\lr{\Lambda B^TV,V}|$ is bounded by
\begin{align*}
C'g_j\sum_{k=1}^3\sum_{l=1}^3\lambda_k|V_l|\,|V_k|\leq C'g_j\sum_{k=1}^3\lambda_k|V_k|^2+2\,C'g_j\lambda_2|V_1||V_2|\\
+2\,C'g_j\lambda_3\big(|V_1||V_3|+|V_2||V_3|\big).
\end{align*}
Thanks to \eqref{eq:katei:2b} one has
\[
2\lambda_2|V_1||V_2|\leq \sqrt{\lambda_2}\varphi_j|V_1|^2+\sqrt{\lambda_2}\frac{\lambda_2}{\varphi_j}|V_2|^2
 \leq C\sqrt{\lambda_2}\,\varphi_j^{-1}\big(\lambda_1|V_1|^2+\lambda_2|V_2|^2\big)
\]
and
\begin{align*}
2\lambda_3|V_1||V_3|\leq \varphi_j|V_1|^2+\varphi_j^{-1}\lambda_3^2|V_3|^2\leq C\,\varphi_j^{-1}\big(\lambda_1|V_1|^2+\lambda_3|V_3|^2\big),\\
2\lambda_3|V_2||V_3|\leq \varphi_j^{1/2}|V_2|^2+\varphi_j^{-1/2}|V_3|^2\leq C\sqrt{\varphi_j}\varphi_j^{-1}\big(\lambda_2|V_2|^2+\lambda_3|V_3|^2\big)
\end{align*}
and therefore there exists $C_2$ such that
\begin{equation}
\label{eq:FtoD}
|g_j\lr{\Lambda B^TV,V}|\leq C_2\,\varphi_j^{-1}{\mathcal E}_j\quad\text{in}\;\;\Omega_j.
\end{equation}
Noting that 
\[
2\big|{\mathsf{Re}}\,\lr{\Lambda{\tilde F},V}\big|\leq \varphi_j\lr{\Lambda{\tilde F},{\tilde F}}+\varphi_j^{-1}\lr{\Lambda V,V}
\]
and $|\lr{\Lambda{\tilde F},{\tilde F}}|\leq C''\|{\tilde F}\|^2=C''\|F\|^2$ we obtain
\begin{lem}
\label{lem:kukan:1}Let $j=1$ or $3$. There exist $N_0$ and $C>0$ such that for any $N\geq N_0$ and any $U(t,\xi)$ verifying $\dif_t^kU(t_{j-1}(\xi),\xi)=0$, $k=0,1,\ldots, N$ one has
\begin{align*}
\|U(t)\|^2+N\int_{t_{j-1}}^t\varphi_j^{-2N}(s)\|U(s)\|^2ds\leq C\int_{t_{j-1}}^t\varphi_j^{-2N}(s)\|F(s)\|^2ds
\end{align*}
for $t\in \Omega_j(\xi)$.
\end{lem}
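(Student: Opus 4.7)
The plan is to derive a Grönwall-type differential inequality for the weighted scalar energy $\mathcal{E}_j(t)$ introduced just before the lemma, integrate from the singular endpoint $t_{j-1}(\xi)$ where $\varphi_j$ vanishes, and then convert the weighted bound back to $\|U\|^2$ using $\lambda_1\ge c\varphi_j^2$ from \eqref{eq:katei:2b}. The high-order vanishing of $U$ at $t_{j-1}$ is precisely what renders the singular weight $g_j=\varphi_j^{-2N-1}$ (recall $(-1)^j=-1$ for $j\in\{1,3\}$) harmless at that endpoint.

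First I would assemble the bounds already compiled in the text preceding the lemma. The three error contributions to $d\mathcal{E}_j/dt$ are $|g_j\lr{(\dif_t\Lambda)V,V}|+|g_j\lr{\Lambda(\dif_tT^{-1})TV,V}|\le C_1\varphi_j^{-1}\mathcal{E}_j$ (by the arguments of Section \ref{sec:enehyoka:1} applied through \eqref{eq:katei:2b}), $|g_j\lr{\Lambda B^TV,V}|\le C_2\varphi_j^{-1}\mathcal{E}_j$ from \eqref{eq:FtoD}, and $2g_j|{\mathsf{Re}}\lr{\Lambda\tilde F,V}|\le \varphi_j^{-1}\mathcal{E}_j+C''\varphi_j^{-2N}\|F\|^2$ by weighted Cauchy--Schwarz together with $\|\tilde F\|=\|F\|$ (orthogonality of $T$) and the boundedness of $\Lambda$. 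Choosing $N_0$ so that $2N+1-C_1-C_2-1\ge N$ whenever $N\ge N_0$, the energy identity for $d\mathcal{E}_j/dt$ collapses to
\[
\frac{d}{dt}\mathcal{E}_j(t)+N\varphi_j(t)^{-1}\mathcal{E}_j(t)\le C''\varphi_j(t)^{-2N}\|F(t)\|^2\quad\text{on } \Omega_j(\xi).
\]

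Second, I would use the vanishing hypothesis $\dif_t^kU(t_{j-1})=0$ for $k=0,\ldots,N$ to integrate from $t_{j-1}$. Taylor's theorem (with $\dif_t\varphi_j=1$ for $j\in\{1,3\}$) gives $\|U(t)\|^2=O(\varphi_j(t)^{2N+2})$ as $t\to t_{j-1}^+$; since $T$ is orthogonal and $\Lambda$ is bounded this forces $\mathcal{E}_j(t)=O(\varphi_j(t))$ there, so $\mathcal{E}_j(t_{j-1})=0$ and the integrand $\varphi_j^{-1}\mathcal{E}_j$ is integrable at $t_{j-1}$. Integrating the differential inequality then delivers
\[
\mathcal{E}_j(t)+N\int_{t_{j-1}}^t\varphi_j^{-1}\mathcal{E}_j\,ds\le C''\int_{t_{j-1}}^t\varphi_j^{-2N}\|F\|^2\,ds.
\]

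Finally I would convert $\mathcal{E}_j$ back to $\|U\|^2$. From $\lambda_1\ge c\varphi_j^2$ in \eqref{eq:katei:2b} and $\|V\|=\|U\|$ one has $\lr{\Lambda V,V}\ge \lambda_1\|V\|^2\ge c\varphi_j^2\|U\|^2$, hence $\mathcal{E}_j\ge c\varphi_j^{1-2N}\|U\|^2$ and $\varphi_j^{-1}\mathcal{E}_j\ge c\varphi_j^{-2N}\|U\|^2$. Shrinking $\delta$ so that $\varphi_j\le 1$ on every $\Omega_j(\xi)$ near $\bar\xi$ (automatic since $\psi(\bar\xi)=0$) makes $\varphi_j^{1-2N}\ge 1$, so $\mathcal{E}_j(t)\ge c\|U(t)\|^2$ and the claim follows after absorbing constants into $C$. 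The only non-routine point of the argument is ensuring that $C_1,C_2,C''$ are independent of $N$ and uniform in $\xi$ on a fixed neighborhood of $\bar\xi$, so that a single $N_0$ works simultaneously on all $\Omega_j(\xi)$; this uniformity is exactly what Proposition \ref{pro:kihon} and the pointwise estimates of Section \ref{sec:enehyoka:1} were set up to provide.
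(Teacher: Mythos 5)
Your argument is correct and follows the same route as the paper: assemble the pointwise estimates established in the text immediately preceding the lemma (bounds on $\langle(\partial_t\Lambda)V,V\rangle$, $\langle\Lambda(\partial_tT^{-1})TV,V\rangle$, $\langle\Lambda B^TV,V\rangle$, and the weighted Cauchy--Schwarz treatment of the $\tilde F$ term), absorb the error terms into the favorable $-(2N+1)\varphi_j^{-1}\mathcal{E}_j$ contribution by choosing $N_0\ge C_1+C_2$, integrate the resulting differential inequality from the singular endpoint $t_{j-1}$ using the $N$-fold vanishing of $U$ to kill the boundary term, and then convert $\mathcal{E}_j$ back to $\|U\|^2$ via $\varphi_j^2\le C\lambda_1$, $\lambda_1\le\lambda_2\le\lambda_3$, orthogonality of $T$, and $\varphi_j\le 1$. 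This is exactly the content the paper leaves implicit after the displayed inequality for $2|{\mathsf{Re}}\langle\Lambda\tilde F,V\rangle|$.
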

With $\lr{\xi}=|\xi|+1$ it follows from Lemma \ref{lem:kukan:1} that
\begin{cor}
\label{cor:kukan:1}Let $j=1$ or $3$. There is $N_0$ such that for any $L\in\N$ there exists $C_L>0$ such that for any $U$ with $\dif_t^kU(t_{j-1}(\xi),\xi)=0$, $k=0,1,\ldots,N+L$ one has
\begin{equation}
\label{eq:indL}
\begin{split}
\sum_{k=0}^L\left\|\lr{\xi}^{L-k}\dif_t^kU(t)\right\|^2+N\sum_{k=0}^L\int_{t_{j-1}}^t\varphi_j^{-2N}(s)\left\|\lr{\xi}^{L-k}\dif_t^kU(s)\right\|^2ds\\
\leq C_L\sum_{k=0}^L\int_{t_{j-1}}^t\varphi_j^{-2N}(s)\left\|\lr{\xi}^{L-k}\dif_t^kF(s)\right\|^2ds
\end{split}
\end{equation}
for $t\in \Omega_j(\xi)$ and $N\geq N_0$.
\end{cor}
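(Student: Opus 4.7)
The plan is to apply Lemma \ref{lem:kukan:1} to each differentiated unknown $W_k := \lr{\xi}^{L-k}\dif_t^k U$ for $k = 0,1,\ldots,L$, sum the resulting inequalities, and absorb the commutator contributions.

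First, differentiating the reduced first-order system
\[
\dif_t U = iA(t,\xi)|\xi| U + B(t,\xi) U + F
\]
$k$ times in $t$ yields
\[
\dif_t(\dif_t^k U) = iA|\xi|\,\dif_t^k U + B\,\dif_t^k U + F^{(k)},
\]
where
\[
F^{(k)} = \dif_t^k F + \sum_{\ell=1}^{k}\binom{k}{\ell}\bigl[i(\dif_t^\ell A)|\xi| + \dif_t^\ell B\bigr]\dif_t^{k-\ell} U.
\]
Because $A$ and $B$ depend on $\xi$ only through degree-zero homogeneous entries, $\dif_t^\ell A$ and $\dif_t^\ell B$ are uniformly bounded in $\xi$. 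Consequently $W_k$ satisfies an equation of the same type with forcing $\lr{\xi}^{L-k}F^{(k)}$, and the hypothesis $\dif_t^m U(t_{j-1},\xi)=0$ for $m=0,\ldots,N+L$ ensures $\dif_t^m W_k(t_{j-1},\xi)=0$ for $m=0,\ldots,N$.

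Applying Lemma \ref{lem:kukan:1} to each $W_k$ and summing over $k$ gives
\[
\sum_{k=0}^L\|W_k(t)\|^2 + N\sum_{k=0}^L\int_{t_{j-1}}^t\varphi_j^{-2N}\|W_k\|^2\,ds
\le C\sum_{k=0}^L\int_{t_{j-1}}^t\varphi_j^{-2N}\bigl\|\lr{\xi}^{L-k}F^{(k)}\bigr\|^2\,ds.
\]
Every commutator contribution to $F^{(k)}$ produces a term bounded pointwise by a constant multiple of $\lr{\xi}^{L-k+1}|\dif_t^{k-\ell}U|$ with $\ell \ge 1$; setting $k'=k-\ell<L$ and using $L-k+1 \le L-k'$, each such term is dominated by $\|W_{k'}\|$. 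Hence the right-hand side splits as
\[
C\sum_{k=0}^L\int_{t_{j-1}}^t\varphi_j^{-2N}\bigl\|\lr{\xi}^{L-k}\dif_t^k F\bigr\|^2\,ds \;+\; \tilde C_L \sum_{k=0}^{L-1}\int_{t_{j-1}}^t\varphi_j^{-2N}\|W_k\|^2\,ds,
\]
with $\tilde C_L$ depending on $L$ only. Taking $N_0$ large enough (depending on $L$, ultimately absorbed into the constant $C_L$) so that $\tilde C_L \le N/2$, the commutator integral is absorbed into the $N$-weighted sum on the left, yielding \eqref{eq:indL}.

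The principal technical point is the bookkeeping of the commutators: one must verify that every term produced by differentiating $iA|\xi|U + BU$ in $t$ introduces at most one extra power of $\lr{\xi}$ per derivative consumed, which is exactly the pairing encoded by the weight $\lr{\xi}^{L-k}\dif_t^k$. This matching is automatic in the present time-dependent setting because $A$ and $B$ are degree-zero homogeneous in $\xi$ and depend on $t$ alone, so the single power of $|\xi|$ coming from $iA|\xi|$ is compensated by the $k \mapsto k-\ell$ drop in derivative order with $\ell \ge 1$, and no dangerous extra powers of $\lr{\xi}$ arise.
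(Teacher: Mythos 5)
Your overall strategy --- differentiate the reduced first-order system $k$ times in $t$, apply Lemma \ref{lem:kukan:1} to each $W_k = \lr{\xi}^{L-k}\dif_t^kU$, and control the Leibniz commutator terms --- is the natural one, and your bookkeeping is right: each commutator contribution $(\dif_t^\ell A)|\xi|\dif_t^{k-\ell}U$ with $\ell\ge 1$, once multiplied by $\lr{\xi}^{L-k}$, is dominated by $\|W_{k-\ell}\|$ because $L-k+1\le L-(k-\ell)$, the boundedness of $\dif_t^\ell A, \dif_t^\ell B$ holds as $A$ and $B$ are degree-zero homogeneous in $\xi$, and the vanishing hypotheses $\dif_t^m U(t_{j-1})=0$, $m\le N+L$, indeed give $\dif_t^m W_k(t_{j-1})=0$ for $m\le N$.

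The one genuine gap is in the final absorption step, where you take $N_0$ ``large enough (depending on $L$)'' so that $\tilde C_L\le N/2$. The corollary fixes $N_0$ \emph{before} quantifying over $L$, so $N_0$ must be independent of $L$; this cannot be repaired by enlarging $C_L$, since the bound is asserted for every $N\ge N_0$, including the range $N_0\le N<2\tilde C_L$ that your absorption does not reach. The standard fix is to argue iteratively rather than summing and absorbing in one stroke: bound $W_0$ directly from Lemma \ref{lem:kukan:1} (here $F^{(0)}=F$, no commutator); then bound $W_1$ by applying the lemma with forcing $\lr{\xi}^{L-1}F^{(1)}$, estimating $\|\lr{\xi}^{L-1}F^{(1)}\|\preceq\|\lr{\xi}^{L-1}\dif_tF\|+\|W_0\|$ and substituting the already-obtained bound $\int\varphi_j^{-2N}\|W_0\|^2\,ds\le (C/N)\int\varphi_j^{-2N}\|\lr{\xi}^{L}F\|^2\,ds\le (C/N_0)\int\varphi_j^{-2N}\|\lr{\xi}^{L}F\|^2\,ds$, which requires only the $L$-independent threshold $N_0$ of Lemma \ref{lem:kukan:1}; continue up to $W_L$. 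Each step contributes a constant that grows with the step index, but the threshold stays fixed; summing the $L+1$ bounds gives \eqref{eq:indL} with $N_0$ independent of $L$, as stated.
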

For the subinterval $\Omega_2(\xi)$ the argument in Section \ref{sec:enehyoka:1} shows again
\begin{lem}
\label{lem:kukan:2}There exist $N_0\in\N$ and $C>0$ such that one has 
\begin{equation}
\label{eq:indL:2}
\begin{split}
\varphi_2^{2N-1}(t)\|U(t)\|^2+N\int_{t_1}^t\varphi_2^{2N}(s)\|U(s)\|^2ds\\
\leq C\|U(t)\|^2+C\int_{t_1}^t\varphi_2^{2N}(s)\|F(s)\|^2ds.
\end{split}
\end{equation}
for $t\in \Omega_2(\xi)$ and  $N\geq N_0$.
\end{lem}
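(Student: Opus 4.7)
The plan is to run a weighted energy argument parallel to the proof of Lemma \ref{lem:kukan:1}, but adapted to the subinterval $\Omega_2=[t_1,t_2]$, on which $\varphi_2=\psi-t$ \emph{decreases} from $\psi/2$ to $0$ and no vanishing initial data is available at $t_1$. Accordingly I take the energy ${\mathcal E}_2(t)=g_2\langle\Lambda V,V\rangle$ with $g_2=\varphi_2^{2N-1}$; this weight is a \emph{positive} power of $\varphi_2$, and differentiating it contributes $-(2N-1)\varphi_2^{-1}{\mathcal E}_2$, which still carries the favorable sign because $\dif_t\varphi_2=-1$.

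Using the symmetry of $\Lambda{\mathcal A}$ to kill the $i\Lambda{\mathcal A}|\xi|$ term, one obtains
\[
\frac{d}{dt}{\mathcal E}_2=-(2N-1)\varphi_2^{-1}{\mathcal E}_2+g_2\Big(\langle(\dif_t\Lambda)V,V\rangle+2\,\mathsf{Re}\langle\Lambda{\mathcal B}V,V\rangle+2\,\mathsf{Re}\langle\Lambda{\tilde F},V\rangle\Big).
\]
Since $\Omega_2\subset\omega_2$, Proposition \ref{pro:kihon} validates the scalar-weight hypothesis \eqref{eq:katei:D} with $\phi=\varphi_2$, so by Lemma \ref{lem:katei:D} and the estimates of Sections \ref{sec:enehyoka:1}--\ref{sec:ene:tdep} (in particular \eqref{eq:FtoD} applied to the $B^T$-part of ${\mathcal B}=B^T+(\dif_tT^{-1})T$) one finds
\[
\big|\langle(\dif_t\Lambda)V,V\rangle\big|+2\big|\mathsf{Re}\langle\Lambda{\mathcal B}V,V\rangle\big|\leq C_0\,\varphi_2^{-1}\langle\Lambda V,V\rangle,
\]
while Cauchy--Schwarz bounds $2|\mathsf{Re}\langle\Lambda{\tilde F},V\rangle|\leq\varphi_2^{-1}\langle\Lambda V,V\rangle+C\varphi_2\|F\|^2$. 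Combining,
\[
\frac{d}{dt}{\mathcal E}_2\leq -(2N-C_0-2)\,\varphi_2^{-1}{\mathcal E}_2+C\varphi_2^{2N}\|F\|^2.
\]

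For $N\geq N_0$ chosen so that $2N-C_0-2\geq N$, integrating from $t_1$ to $t$ yields
\[
{\mathcal E}_2(t)+N\int_{t_1}^t\varphi_2^{-1}(s)\,{\mathcal E}_2(s)\,ds\leq {\mathcal E}_2(t_1)+C\int_{t_1}^t\varphi_2^{2N}(s)\|F(s)\|^2\,ds.
\]
Orthogonality of $T$ and the uniform bound $\lambda_j\leq C$ give ${\mathcal E}_2(t_1)\leq C\varphi_2^{2N-1}(t_1)\|U(t_1)\|^2\leq C\|U(t_1)\|^2$ (read as the $C\|U(t)\|^2$ of the statement at the endpoint $t_1$), and the lower bound $\lambda_1\geq c\varphi_2^2$ from \eqref{eq:katei:2b} together with $\lambda_j\geq\lambda_1$ and $\|V\|=\|U\|$ yields $\langle\Lambda V,V\rangle\geq c\varphi_2^2\|U\|^2$, hence $\varphi_2^{-1}{\mathcal E}_2\geq c\varphi_2^{2N}\|U\|^2$; an analogous componentwise lower bound on ${\mathcal E}_2(t)$ recovers the $\varphi_2^{2N-1}\|U(t)\|^2$ term.

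The main obstacle is pure bookkeeping: verifying that every error estimate of Sections \ref{sec:enehyoka:1} and \ref{sec:ene:tdep}, stated for an abstract scalar weight $\phi$ satisfying \eqref{eq:katei:D}, carries over uniformly in $\xi\in U$ to $\phi=\varphi_2$ on the subinterval $\Omega_2$. Proposition \ref{pro:kihon} is exactly what supplies this, so the rest reduces to tracking constants and powers of $\varphi_2$ when passing between $\langle\Lambda V,V\rangle$ and $\|U\|^2$.
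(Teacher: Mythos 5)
Your derivation follows exactly the scheme the paper sketches in Section \ref{sec:ene:tdep}: take $g_2=\varphi_2^{2(-1)^2N-1}=\varphi_2^{2N-1}$, use $\dif_t\varphi_2=-1$ and ${\mathsf{Re}}\,\lr{i\Lambda{\mathcal A}|\xi|V,V}=0$ to get the favorable $-(2N-1)\varphi_2^{-1}{\mathcal E}_2$ term, absorb $\lr{(\dif_t\Lambda)V,V}$, $\lr{\Lambda(\dif_tT^{-1})TV,V}$ and $\lr{\Lambda B^TV,V}$ into $C_1\varphi_2^{-1}{\mathcal E}_2$ via \eqref{eq:katei:2b} and \eqref{eq:FtoD}, treat the forcing by Cauchy--Schwarz, and integrate from $t_1$ to $t$. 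You also correctly read the $C\|U(t)\|^2$ on the right of the statement as $C\|U(t_1)\|^2$ (the boundary term your integration actually produces), which is confirmed by the presence of $U(t_1)$ in Corollary \ref{cor:kukan:2}.

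There is, however, a gap in the final step where you claim ``an analogous componentwise lower bound on ${\mathcal E}_2(t)$ recovers the $\varphi_2^{2N-1}\|U(t)\|^2$ term.'' The only lower bound available is $\lr{\Lambda V,V}\geq\lambda_1\|V\|^2\geq c\,\varphi_2^2\|U\|^2$ (from \eqref{eq:katei:2b}); this is precisely what you use for the integral, giving $\varphi_2^{-1}{\mathcal E}_2\geq c\,\varphi_2^{2N}\|U\|^2$, and applied to the pointwise term it yields ${\mathcal E}_2(t)=\varphi_2^{2N-1}\lr{\Lambda V,V}\geq c\,\varphi_2^{2N+1}\|U(t)\|^2$, with exponent $2N+1$, not $2N-1$. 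The two extra powers of $\varphi_2$ cannot be discarded, since $\lambda_1$ really can be as small as a constant times $\varphi_2^2$. This is the crucial asymmetry with Lemma \ref{lem:kukan:1}: there $g_j=\varphi_j^{-2N-1}$ carries a \emph{negative} power, so ${\mathcal E}_j(t)\geq c\,\varphi_j^{-2N+1}\|U(t)\|^2\geq c'\|U(t)\|^2$ because $\varphi_j$ is bounded above, whereas here the power is positive and $\varphi_2(t)\to0$ as $t\to t_2(\xi)$. Note also that the exponent on $\varphi_2$ in the forcing integral, $\varphi_2^{2N}=g_2\varphi_2$, is consistent only with the choice $g_2=\varphi_2^{2N-1}$, which then forces $\varphi_2^{2N+1}$ on the left. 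So what your argument actually proves is
\[
\varphi_2^{2N+1}(t)\|U(t)\|^2+N\int_{t_1}^t\varphi_2^{2N}(s)\|U(s)\|^2\,ds
\leq C\|U(t_1)\|^2+C\int_{t_1}^t\varphi_2^{2N}(s)\|F(s)\|^2\,ds,
\]
which is almost certainly the intended statement. You should either record this corrected exponent or explain how to upgrade it; as written, the assertion that the $\varphi_2^{2N-1}\|U(t)\|^2$ term is recovered does not follow from the bounds you invoke.
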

\begin{cor}
\label{cor:kukan:2}There exists $N_0$ such that for any $L\in \N$ there is $C_L$ such that
\begin{equation}
\label{eq: indL:2bis} 
\begin{split}
\varphi_2^{2N-1}(t)\sum_{k=0}^L\left\|\lr{\xi}^{L-k}\dif_t^kU(t)\right\|^2+N\sum_{k=0}^L\int_{t_1}^t\varphi_2^{2N}(s)\left\|\lr{\xi}^{L-k}\dif_t^kU(s)\right\|^2ds\\
\leq C_L\sum_{k=0}^L\left\|\lr{\xi}^{L-k}\dif_t^kU(t_1)\right\|^2+C_L\sum_{k=0}^L\int_{t_1}^t\varphi_2^{2N}(s)\left\|\lr{\xi}^{L-k}\dif_t^kF(s)\right\|^2ds
\end{split}
\end{equation}
for $t\in\Omega_2$ and $N\geq N_0$.
\end{cor}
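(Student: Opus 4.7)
The plan is to derive Corollary \ref{cor:kukan:2} from Lemma \ref{lem:kukan:2} applied to the successive time derivatives of $U$, in complete analogy with how Corollary \ref{cor:kukan:1} is obtained from Lemma \ref{lem:kukan:1}. Applying $\dif_t^k$ to $\dif_t U = iA\lr{\xi}U + BU + F$ and using the Leibniz rule gives
\[
\dif_t(\dif_t^k U) = iA\lr{\xi}(\dif_t^k U) + B(\dif_t^k U) + \widetilde F_k,
\]
where
\[
\widetilde F_k = \dif_t^k F + \sum_{j=0}^{k-1}\binom{k}{j}\bigl[i(\dif_t^{k-j}A)\lr{\xi} + \dif_t^{k-j}B\bigr]\dif_t^j U.
\]
Thus $\dif_t^k U$ solves a system of exactly the same form as $U$, so Lemma \ref{lem:kukan:2} applies to it with source $\widetilde F_k$.

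Applying that lemma to $\dif_t^k U$ and multiplying the resulting inequality by $\lr{\xi}^{2(L-k)}$ produces the target weighted norms on the left-hand side. To estimate $\|\lr{\xi}^{L-k}\widetilde F_k\|$ on the right, I would use that $\dif_t^j A$ and $\dif_t^j B$ are uniformly bounded on $\Omega_2\times U$ for $0\leq j\leq L$, together with the observation $\lr{\xi}^{L-k+1}\leq \lr{\xi}^{L-j}$ whenever $j<k$ (since $j-k+1\leq 0$). This yields the pointwise bound
\[
\|\lr{\xi}^{L-k}\widetilde F_k(s)\|^2 \leq C_k\|\lr{\xi}^{L-k}\dif_t^k F(s)\|^2 + C_k\sum_{j=0}^{k-1}\|\lr{\xi}^{L-j}\dif_t^j U(s)\|^2,
\]
so the spurious contributions coming from $\widetilde F_k$ already have the same structure as the quantities we wish to control on the left.

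Summing the resulting estimates over $k=0,1,\ldots,L$ yields the desired left-hand side plus an extra term $C_L\sum_{j=0}^{L-1}\int_{t_1}^t\varphi_2^{2N}\|\lr{\xi}^{L-j}\dif_t^j U\|^2\,ds$ on the right. Choosing $N_0=N_0(L)$ large enough that $N-C_L\geq N/2$ for $N\geq N_0$, this term is absorbed into $N\sum_{k=0}^L\int_{t_1}^t\varphi_2^{2N}\|\lr{\xi}^{L-k}\dif_t^k U\|^2\,ds$ on the left, producing the claimed inequality after renaming constants.

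The essential ingredients are the differentiated equation for $\dif_t^k U$ and the elementary bound $\lr{\xi}^{L-k+1}\leq \lr{\xi}^{L-j}$ for $j<k$; together these guarantee that every error term generated by $\widetilde F_k$ has precisely the shape of a quantity controlled on the left-hand side, so that absorption via large $N$ closes the argument. There is no substantive analytical obstacle beyond this bookkeeping, and the whole proof is the natural higher-order analogue of Lemma \ref{lem:kukan:2}.
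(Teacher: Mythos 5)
Your overall strategy is the right one and is surely what the author has in mind: differentiate the first-order system in $t$, observe that $\dif_t^k U$ solves the same system with a modified source $\widetilde F_k$, apply Lemma~\ref{lem:kukan:2} with $\lr{\xi}^{2(L-k)}$ weights, use $\lr{\xi}^{L-k+1}\leq \lr{\xi}^{L-j}$ for $j<k$ to recognize the error terms as the very quantities on the left-hand side, and close. The Leibniz computation of $\widetilde F_k$ and the pointwise bound on $\|\lr{\xi}^{L-k}\widetilde F_k\|$ are both correct, using only boundedness of $\dif_t^j A$, $\dif_t^j B$.

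The one genuine issue is the final absorption step. You sum over $k$ first and then require $N\geq 2C_L$, which forces $N_0=N_0(L)$. But the corollary asserts the opposite quantifier order: a single $N_0$ that works for \emph{every} $L$, with only $C_L$ allowed to grow. This matters downstream, e.g.\ in Proposition~\ref{pro:tdep}, where one first fixes $N_0$ and then takes $q\geq N_0$; if $N_0$ grew with $L$ this would be circular. The fix is to argue by induction on $k$ rather than summing and absorbing: with $(*)_j$ denoting the estimate for $\dif_t^j U$, one has from $(*)_j$ that
\[
\int_{t_1}^t\varphi_2^{2N}\|\lr{\xi}^{L-j}\dif_t^j U\|^2\,ds
\leq \frac{1}{N}\Big(C_j\sum_{i\leq j}\|\lr{\xi}^{L-i}\dif_t^i U(t_1)\|^2
+ C_j\sum_{i\leq j}\int_{t_1}^t\varphi_2^{2N}\|\lr{\xi}^{L-i}\dif_t^i F\|^2\,ds\Big),
\]
so when you plug $j<k$ into the $\widetilde F_k$-contribution in the estimate for $\dif_t^k U$, the $1/N$ factor (bounded by $1/N_0\leq 1$) absorbs nothing into the left side but simply produces admissible right-hand-side data, giving $(*)_k$ with a new constant $C_k$ and the \emph{same} $N_0$ inherited from Lemma~\ref{lem:kukan:2}. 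This small change in bookkeeping recovers the corollary's quantifier structure; the rest of your argument stands.
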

Since energy estimates in each subinterval $\Omega_j$ is obtained, repeating the same arguments as in \cite{N1}, \cite[Section 6]{N3} one can collect the energy estimates in $\Omega_j$ yielding energy estimates of $U(t,\xi)$ in the whole interval $[0,\delta]$.
\begin{prop}
\label{pro:tdep}Assume that $p$ has a triple characteristic root ${\bar \tau}$ at $(0,{\bar \xi})$, $|\bar{\xi}|=1$ and $(0,{\bar\tau},{\bar\xi})$ is effectively hyperbolic. Then there exist $\delta>0$ and a conic neighborhood $U$ of ${\bar\xi}$ such that for any $a_{j,\alpha}(t)$ with $j+|\alpha|\leq 2$ one can find $N_0\in \N$ such that for any $q\in \N$ with $q\geq N_0$ there is $C>0$  such that
\begin{equation}
\label{eq:matome:tdep}
\sum_{k=0}^{q+3}\big|\lr{\xi}^{q+2-k}\dif_t^k{\hat u}(t,\xi)\big|^2\leq C\sum_{k=0}^q\int_0^t\big|\lr{\xi}^{N_0+q-k}\dif_t^k{\hat f}(s,\xi)\big|^2ds
\end{equation}
for $(t,\xi)\in [0,\delta]\times U$ and for any ${\hat u}(t,\xi)$ with $\dif_t^k{\hat u}(0,\xi)=0$, $k=0,1,2$ and  ${\hat f}(t,\xi)$ with $\dif_t^k{\hat f}(0,\xi)=0$, $k=0,\ldots,q+N_0$ satisfying \eqref{eq:eqFT}.
\end{prop}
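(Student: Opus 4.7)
The plan is to combine the three subinterval energy estimates, Corollaries~\ref{cor:kukan:1} (for $j=1,3$) and~\ref{cor:kukan:2}, into a single estimate on $[0,\delta]$, following the interval-gluing scheme of \cite{N1} and \cite[Section 6]{N3}. I work fiber-by-fiber in $\xi\in U$ on the diagonalized system for $V=T^{-1}U$, where $U={^t}(D_t^2\hat{u},|\xi|D_t\hat{u},|\xi|^2\hat{u})$; after the reduction leading to \eqref{eq:eqFT:b}, which is invertible because $|E|=1$, one may assume $\hat{u}$ already solves \eqref{eq:eqFT:b}, and I set $L=q+2$ so that the target $\sum_{k\leq q+3}|\langle\xi\rangle^{q+2-k}\partial_t^k\hat{u}|^2$ is equivalent, uniformly in $\xi$, to $\sum_{k\leq L}\|\langle\xi\rangle^{L-k}\partial_t^k U\|^2$ once $\partial_t^3\hat{u}$ is read off from the equation in terms of lower-order derivatives of $\hat{u}$ and of $\hat{f}$.

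For $\xi$ with $\psi(\xi)\leq 0$ the subintervals collapse to $\Omega_2\cup\Omega_3=[0,\delta]$ and a single application of Corollary~\ref{cor:kukan:1} with $j=3$ starting from zero data suffices. In the main case $\psi(\xi)>0$ I proceed in three stages. On $\Omega_1=[0,\psi/2]$, Corollary~\ref{cor:kukan:1} with $j=1$ applies with zero initial data: since $F={^t}(iE\hat{f},0,0)$ inherits the vanishing of $\hat{f}$ to order $q+N_0$ at $t=0$, choosing $N_0$ with $q+N_0\geq N+L$ makes the weighted integrals $\int_0^{t_1}\varphi_1^{-2N}\|\langle\xi\rangle^{L-k}\partial_t^k F\|^2\, ds$ convergent and of size compatible with the right-hand side of \eqref{eq:matome:tdep}. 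This yields a bound on $\sum_{k\leq L}\|\langle\xi\rangle^{L-k}\partial_t^k U(t_1)\|^2$. Feeding this bound into Corollary~\ref{cor:kukan:2} on $\Omega_2=[t_1,t_2]$ converts it into a bound on $\int_{t_1}^{t}\varphi_2^{2N}\|\langle\xi\rangle^{L-k}\partial_t^k U\|^2\, ds$, while the pointwise term $\varphi_2^{2N-1}(t)\|\langle\xi\rangle^{L-k}\partial_t^k U(t)\|^2$ degenerates as $t\to t_2$.

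The delicate step is the passage to $\Omega_3=[t_2,\delta]$: Corollary~\ref{cor:kukan:1} with $j=3$ nominally requires $\partial_t^k U(t_2)=0$, which is not available. Following \cite{N1} and \cite[Section 6]{N3}, this defect is resolved by combining the two estimates through the common zero of $\varphi_2$ and $\varphi_3$ at $t_2$: one rewrites the $\Omega_3$ energy identity against a weight whose vanishing at $t_2$ absorbs the singular factor $\varphi_3^{-2N}$, and uses the finiteness of the $\Omega_2$-weighted integral of $\|\partial_t^k U\|^2$ secured in the previous step to play the role of the missing initial datum. Summing the three resulting estimates and reverting from $U$ back to $\hat{u}$ via $|E|=1$ and the equation itself produces \eqref{eq:matome:tdep}, with the loss $N_0$ of $\langle\xi\rangle$-powers accumulating from the weight exponents tracked through each splice. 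The main obstacle is precisely this gluing through the common zero at $t_2$: one must verify that the $\Omega_2$-integral supplied by Corollary~\ref{cor:kukan:2} is strong enough to serve as a surrogate initial value for $\Omega_3$, and that the total $\langle\xi\rangle$-loss accumulated across the three stages stays within the single budget $N_0$ permitted in the statement.
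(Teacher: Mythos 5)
Your proposal follows essentially the same route as the paper: reduce to the diagonalized first-order system for $V=T^{-1}U$, derive weighted energy estimates on the three subintervals $\Omega_1,\Omega_2,\Omega_3$ (which are exactly Lemmas~\ref{lem:kukan:1}--\ref{lem:kukan:2} and Corollaries~\ref{cor:kukan:1}--\ref{cor:kukan:2}), and then splice them; the paper itself leaves the splicing to ``repeating the same arguments as in \cite{N1}, \cite[Section 6]{N3},'' so your deferral to those references is at the same level of detail. One caution: your suggested mechanism for crossing $t_2$ --- replacing the singular weight $\varphi_3^{-2N-1}$ by one vanishing at $t_2$ --- is not how the gluing in \cite{N1} actually proceeds (a weight vanishing at the left endpoint of $\Omega_3$ has time-derivative of the wrong sign and cannot absorb the $\varphi_3^{-1}$ error terms); the matching instead exploits the finiteness of the $\Omega_2$ integral together with a careful choice of the exponents, but since the paper also omits these details the overall plan is consistent with its proof.
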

%

\subsection{Remarks on double characteristics}

Assume that $P$ is a differential operator of order $2$ and the principal symbol $p$ has a double characteristic root ${\bar\tau}$ at $(0,{\bar \xi})$, $|\xi|=1$. After Fourier transform in $x$ the equation $Pu=f$ reduces to
\begin{equation}
\label{eq:eqFT:2}
D_t^2{\hat u}+\sum_{j+|\alpha|\leq 2, j\leq 1}a_{j,\alpha}(t)\xi^{\alpha}D_t^j{\hat u}={\hat f}
\end{equation}
Making similar procedure in Section \ref{sec:ene:tdep} one can assume that the principal symbol $p$ has the form
\begin{equation}
\label{eq:p:double}
p(t,\tau,\xi)=\tau^2-a(t,\xi)|\xi|^2,\quad a(0,{\bar\xi})=0
\end{equation}
so that ${\bar \tau}=0$ is a double characteristic root. 

If $\dif_ta(0,{\bar\xi})\neq 0$ one can write
\[
a(t,\xi)=e_1(t,\xi)\big(t+\alpha(\xi)\big),\quad \alpha({\bar\xi})=0
\]
in some neighborhood ${\mathcal U}$ of $(0,{\bar \xi})$ where $e_1>0$ and $\alpha(\xi)\geq 0$ near ${\bar\xi}$. In this case we choose $
\varphi_1=t$, $\Omega_1=[0,\delta]$ so that $\varphi_2$ is not needed. 

If $(0,0,{\bar\xi})$ is a critical point (hence  $\dif_ta(0,{\bar\xi})= 0$) and effectively hyperbolic then 
\begin{equation}
\label{eq:cond:nijyu}
\dif_t^2a(0,{\bar\xi})\neq 0.
\end{equation}
Indeed, assuming $a(0,{\bar\xi})=\dif_ta(0,{\bar\xi})=0$ it is easy to see
\[
{\rm det}\,\big(\lambda-F_p(0,0,{\bar\xi})\big)=\lambda^{2n}\big(\lambda^2-2\,\dif_t^2a(0,{\bar\xi})\big)
\]
which shows that $\dif_t^2a(0,{\bar\xi})\neq 0$ if $(0,0,{\bar\xi})$ is effectively hyperbolic.
From the Malgrange preparation theorem one can write, in some neighborhood ${\mathcal U}$ of $(0,{\bar\xi})$
\[
a(t,\xi)=e_2(t,\xi)\big(t^2+a_1(\xi)t+a_2(\xi)\big)=e_2\prod_{k=1}^2\big(t-{\nu}_k(\xi)\big)
\]
where $e_2>0$ and $a_i({\bar\xi})=0$. Note that if ${\mathsf{Re}}\,{\nu}_1(\xi)\neq {\mathsf{Re}}\,{\nu}_2(\xi)$ then ${\nu}_i(\xi)$ is necessarily real and ${\nu}_i(\xi)\leq 0$. In the case that  either ${\mathsf{Re}}\,{\nu}_1(\xi)\neq {\mathsf{Re}}\,{\nu}_2(\xi)$ or ${\mathsf{Re}}\,{\nu}_1(\xi)={\mathsf{Re}}\,{\nu}_2(\xi)\leq 0$ we take $\varphi_1=t$, $\Omega_1=[0,\delta]$ and $\varphi_2$ is absent. 
In the case  ${\mathsf{Re}}\,{\nu}_1(\xi)={\mathsf{Re}}\,{\nu}_2(\xi)=\psi(\xi)> 0$ so that 
\[
a(t,\xi)=e_2\big\{(t-\psi(\xi))^2+({\mathsf{Im}}\,\nu_1(\xi))^2\big\}
\]
 we take $
\varphi_1=\psi(\xi)-t$, $\Omega_1(\xi)=[0,\psi(\xi)]$, $\varphi_2=t-\psi(\xi)$, $\Omega_2(\xi)=[\psi(\xi),\delta]$. 
Repeating similar arguments as in \cite{N1}, \cite{N2}  one obtains
\begin{prop}
\label{pro:tdep:2}Assume that $p$ has a double characteristic root ${\bar \tau}$ at $(0,{\bar \xi})$, $|\bar{\xi}|=1$ and $(0,{\bar\tau}, {\bar \xi})$ is effectively hyperbolic if it is a critical point. Then one can find $\delta>0$ and a conic neighborhood $U$ of ${\bar\xi}$ such that for any $a_{j,\alpha}(t)$ with $j+|\alpha|\leq 1$ one can find $N_0\in \N$ such that for any $q\in \N$ with $q\geq N_0$ there is $C>0$ such that
\begin{equation}
\label{eq:matome:tdep:double}
\sum_{k=0}^{q+2}\big|\dif_t^k{\hat u}(t,\xi)\big|^2\leq C\sum_{k=0}^q\int_0^t\big|\lr{\xi}^{N_0+q-k}\dif_t^k{\hat f}(s,\xi)\big|^2ds
\end{equation}
for $(t,\xi)\in [0,\delta]\times U$ and for any ${\hat u}(t,\xi)$ with $\dif_t^k{\hat u}(0,\xi)=0$, $k=0,1$ and  ${\hat f}(t,\xi)$ with $\dif_t^k{\hat f}(0,\xi)=0$, $k=0,\ldots,q+N_0$ satisfying \eqref{eq:eqFT:2}.
\end{prop}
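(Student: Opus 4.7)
The plan is to imitate the three-step scheme used for Proposition \ref{pro:tdep}, adapted to the simpler double-characteristic setting. First I would reduce to the canonical form \eqref{eq:p:double} by removing the subprincipal $\tau$-derivative term via a gauge transformation $\hat v = E(t,\xi)\hat u$ (as was done to pass from \eqref{eq:eqFT} to \eqref{eq:eqFT:b}); this does not affect the class of lower order perturbations. Then, with $U={}^t(D_t\hat u,|\xi|\hat u)$, the equation reduces to
\[
D_tU=\begin{bmatrix}0&a\\1&0\end{bmatrix}|\xi|\,U+BU+F,
\]
which is symmetrized by the $2\times 2$ B\'ezoutiant $S=\mathrm{diag}(1,a)$ — already diagonal, so the construction of Section \ref{sec:DiaSym} is trivial here: $\Lambda=S$, $T=I$, and the commutator term $(\partial_tT^{-1})T$ is absent.

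Next, I would establish the analog of Proposition \ref{pro:kihon}. In the non-critical case $\partial_ta(0,\bar\xi)\neq0$, the Malgrange preparation $a=e_1(t+\alpha(\xi))$ with $\alpha\geq0$ gives immediately that $\varphi_1=t$ satisfies $\varphi_1^2\preceq a$, $\varphi_1|\partial_t a|\preceq a$ on $[0,\delta]$. In the effectively hyperbolic critical case, \eqref{eq:cond:nijyu} and Malgrange preparation give $a=e_2(t-\nu_1)(t-\nu_2)$ with $\nu_j(\bar\xi)=0$. When $\mathrm{Re}\,\nu_1=\mathrm{Re}\,\nu_2=\psi(\xi)>0$, an argument parallel to the triple-root case (distinguishing $|\mathrm{Im}\,\nu_1|\gtrsim\mathrm{Re}\,\nu_1$ from the opposite regime) shows that on $\Omega_1=[0,\psi]$ the weight $\varphi_1=\psi-t$ and on $\Omega_2=[\psi,\delta]$ the weight $\varphi_2=t-\psi$ both satisfy
\[
\varphi_j^2\preceq a,\qquad \varphi_j|\partial_ta|\preceq a,
\]
uniformly in $\xi$ in a conic neighborhood of $\bar\xi$; the remaining subcases ($\mathrm{Re}\,\nu_1\neq\mathrm{Re}\,\nu_2$ or both real parts nonpositive) reduce to $\varphi_1=t$ on a single interval.

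With weights in hand, I would form the weighted energy
\[
\mathcal{E}_j=\varphi_j^{2(-1)^jN-1}\bigl(|V_1|^2+a\,|V_2|^2\bigr)
\]
on each $\Omega_j$. Differentiating in $t$, the principal symmetric part contributes nothing to the real part thanks to $\Lambda\mathcal A$ symmetric; the $\partial_t\Lambda$ term is absorbed by $N\varphi_j^{-1}\mathcal E_j$ using $\varphi_j|\partial_ta|\preceq a$; and the lower order $BV$ term is handled exactly as in \eqref{eq:FtoD}, noting that for the $2\times 2$ block the only cross term is $\lambda_2|V_1||V_2|\preceq\sqrt{a}\,\varphi_j^{-1}(|V_1|^2+a|V_2|^2)$, bounded by $\varphi_j^{-1}\mathcal E_j$ in a small neighborhood of $(0,\bar\xi)$. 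This yields, for $N$ large, the analogs of Lemmas \ref{lem:kukan:1}, \ref{lem:kukan:2}, and their Corollaries \ref{cor:kukan:1}, \ref{cor:kukan:2}, after differentiating the equation $k$ times and multiplying by $\lr{\xi}^{L-k}$.

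Finally, I would collect the per-subinterval estimates across $\Omega_1$ and $\Omega_2$ following the bookkeeping in \cite{N1}, \cite{N2}: the estimate on $\Omega_1$ (vanishing data at $t=0$) controls $\|U(t_1)\|$ with a factor of $\varphi_1(t_1)^{-2N}$-type weight, which is then absorbed by the $\varphi_2^{2N-1}$ factor in the $\Omega_2$ estimate — this is where the fixed loss of $N_0$ derivatives in \eqref{eq:matome:tdep:double} appears. The main obstacle I anticipate is precisely this gluing step: one must choose the exponents $N$ so that the right-hand side of the $\Omega_1$-estimate at $t=t_1$ can be bounded by $\lr{\xi}^{N_0}$ times the source term norms, and simultaneously so that $\varphi_2^{2N}$ on $\Omega_2$ remains integrable up to $t=\delta$. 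Once the exponents are consistently chosen across both subintervals, undoing the gauge transformation $E(t,\xi)$ and translating back from $U$ to $\hat u$ yields \eqref{eq:matome:tdep:double}.
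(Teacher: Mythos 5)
Your proposal follows the paper's approach closely: the paper's own treatment is just the sketch in the ``Remarks on double characteristics'' subsection (Malgrange preparation of $a$, the three cases for $\nu_1,\nu_2$, and the choice of weights $\varphi_j$), with the energy-estimate and gluing steps deferred to \cite{N1}, \cite{N2}, and you correctly supply those details by noting that the $2\times 2$ B\'ezoutiant is already diagonal so $T=I$ and the analog of \eqref{eq:katei:2} reduces to $\varphi_j^2\preceq a$ and $\varphi_j|\partial_t a|\preceq a$. One small inaccuracy: since the two eigenvalues of $S$ are $\simeq a$ and $\simeq 1$, the coefficient of the single cross-term in $\lr{\Lambda B V,V}$ is $\simeq 1$, not $a$, so the displayed bound should read $|V_1||V_2|\preceq \varphi_j^{-1}(|V_1|^2+a|V_2|^2)$ (using $\varphi_j\preceq a/\varphi_j$), without the extra $\sqrt a$ factor; the absorption by $N\varphi_j^{-1}\mathcal E_j$ is unaffected. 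Likewise, the regime split $|\mathrm{Im}\,\nu_1|\gtrless\mathrm{Re}\,\nu_1$ you invoke is not needed here, since $\varphi_j^2=(t-\psi)^2\le (t-\psi)^2+(\mathrm{Im}\,\nu_1)^2\simeq a$ holds directly --- the analogue of Lemma~\ref{lem:alnu} is only required in the triple case where $\Delta$ and $a$ are distinct polynomials.
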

%

\subsection{Proof of Theorem \ref{thm:tdep:kyosoku}}
\label{sec:proof:tdep:m}

We turn to the Cauchy problem  \eqref{eq:tdepCPm}. First note that, after Fourier transform in $x$, the equation is reduced to
\begin{equation}
\label{eq:tdepCPmFT}
\left\{\begin{array}{ll}
P(t,D_t,\xi){\hat u}=D_t^m{\hat u}+\sum_{j=0}^{m-1}\sum_{|\alpha|\leq m-j}a_{j,\alpha}(t)\,\xi^{\alpha}D_t^j{\hat u}=0,\\[6pt]
D_t^k{\hat u}(0,\xi)={\hat u}_k(\xi),\;\;\xi\in\R^n,\quad k=0,\ldots,m-1.
\end{array}\right.
\end{equation}
\begin{prop}
\label{pro:tdep:m}Assume that   every critical point $(0,\tau,\xi)$, $\xi\neq 0$ is effectively hyperbolic. Then there exists $\delta>0$ such that for any $a_{j,\alpha}(t)$ with $j+|\alpha|\leq m-1$  one can find $N_0, N_1\in \N$ and $C>0$ such that
\[
\sum_{k=0}^{m-1}\big|\lr{\xi}^{m-1-k}\dif_t^k{\hat u}(t,\xi)\big|^2\leq C\sum_{k=0}^{N_0}\int_0^t\big|\lr{\xi}^{N_0-k}\dif_t^k{\hat f}(s,\xi)\big|^2ds
\]
for $(t,\xi)\in [0,\delta]\times \R^n$ and for any ${\hat u}(t,\xi)$ with $\dif_t^k{\hat u}(0,\xi)=0$, $k=0,\ldots,m-1$ and ${\hat f}(t,\xi)$ with $\dif_t^k{\hat f}(0,\xi)=0$, $k=0,\ldots,N_1$ satisfying $P(t,D_t,\xi){\hat u}={\hat f}$.
\end{prop}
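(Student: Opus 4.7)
The plan is to reduce the $m$-th order estimate to the local estimates of Propositions \ref{pro:tdep} (triple case) and \ref{pro:tdep:2} (double case) via a factorization of the principal symbol together with a partition of unity in $\xi$.

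Fix $\bar\xi \in S^{n-1}$. By the discussion preceding Section \ref{sec:tdep}, the hypothesis that every critical point is effectively hyperbolic forces every characteristic root of $p(0,\cdot,\bar\xi)=0$ to have multiplicity at most $3$, and every characteristic root at $(t,\bar\xi)$ with $t>0$ to have multiplicity at most $2$, since otherwise $F_p = O$ would contradict the non-vanishing of the real eigenvalues of $F_p$. Let $\bar\tau_1 < \cdots < \bar\tau_s$ be the distinct real roots of $p(0,\cdot,\bar\xi)=0$ with multiplicities $r_1,\dots,r_s$, so $r_i \in \{1,2,3\}$ and $\sum r_i = m$. Using the homogeneity of $p$ in $\xi$ together with the Weierstrass preparation theorem applied in $\tau$, in a conic neighborhood $\Gamma_{\bar\xi}$ of $\bar\xi$ and for $t \in [0,\delta]$ we may factor $p(t,\tau,\xi) = \prod_{i=1}^s p_i(t,\tau,\xi)$, where each $p_i$ is monic in $\tau$ of degree $r_i$, smooth in $(t,\xi)$, and with all its $\tau$-roots close to $|\xi|\bar\tau_i$ and hence separated from those of the other $p_j$.

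After passing to the companion first order system, the separation of the root clusters permits a smooth block-diagonalization into blocks of sizes $r_1,\ldots,r_s$, modulo lower order coupling. For a block of size $r_i = 1$ we apply the standard first order hyperbolic estimate; for $r_i = 2$ we apply Proposition \ref{pro:tdep:2} (noting that its hypothesis requires effective hyperbolicity only when the double characteristic is itself a critical point); for $r_i = 3$ the root $\bar\tau_i$ is automatically at a critical point, which is effectively hyperbolic by assumption, so Proposition \ref{pro:tdep} applies. An iterative argument then assembles the block estimates into an estimate for $P$ on $[0,\delta]\times\Gamma_{\bar\xi}$: given control of each block, one inductively absorbs the sub-principal coupling terms into the right-hand side at the cost of additional $\lr\xi^k$-losses. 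To globalize, cover $S^{n-1}$ by finitely many $\Gamma_{\bar\xi_\ell}$, take a smooth conic partition of unity $\{\chi_\ell(\xi)\}$ subordinate to it, apply the local estimate to $\chi_\ell\hat u$ (which satisfies $P(\chi_\ell\hat u) = \chi_\ell\hat f$ since the coefficients are independent of $x$), and sum over $\ell$; the resulting $N_0,N_1$ are the maxima of the local ones.

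The main obstacle is the assembly step. The block factorization is only at the level of the principal symbol, and the smooth change of basis that diagonalizes the system has $t$- and $\xi$-derivatives that generate lower order terms whose size must be measured against the weights $\varphi_j$ from Section \ref{sec:ene:tdep}; moreover those weights are genuinely different for different blocks (each triple block introduces its own subdivision of $[0,\delta]$ associated with the real parts of the roots of its $\bar\Delta$). Tracking these errors through the iteration so that the total derivative loss $N_0$ remains finite and uniform in $\bar\xi$, while simultaneously verifying that the required vanishing of $\hat u$ and $\hat f$ at $t=0$ propagates consistently through each sub-estimate, is the delicate point. Once this bookkeeping is arranged, the three paragraphs above combine to yield the stated estimate.
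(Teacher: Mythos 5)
Your approach is in the same spirit as the paper's but departs at the assembly step, where the two routes are genuinely different. After factoring the principal symbol near $\bar\xi$ into clusters $p=\prod_i p_i$ (both you and the paper do this), the paper stays at the level of the \emph{scalar} operator and writes $P = P^{(1)}P^{(2)}\cdots P^{(r)} + R$ with $R$ having $S^{-\infty}$ coefficients in $\xi$ --- a consequence of the division algorithm for differential operators in $t$ whose coefficients are symbols in $\xi$. It then applies the estimates of Propositions \ref{pro:tdep} and \ref{pro:tdep:2} \emph{sequentially} (an induction on $j$, each step treating $P^{(j+1)}\cdots P^{(r)}\hat u$ as the unknown), and absorbs $R$ trivially because it is smoothing; the conclusion comes from Gronwall's inequality. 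You instead pass to the $m\times m$ companion system and block-diagonalize it smoothly into blocks of sizes $r_i$, with coupling terms of lower symbol order to be absorbed iteratively. Your route is workable, but it forces you to construct and control an $m\times m$ change-of-basis symbol whose derivatives contribute errors that are not $S^{-\infty}$ but merely lower order; these are precisely the ``delicate'' couplings you flag, whereas the paper's remainder $R$ is so small that there is nothing to track. The operator-product factorization thus buys a substantially simpler bookkeeping, and in particular lets the paper avoid ever diagonalizing a matrix of size larger than $3$ (the only diagonalization is the B\'ezoutiant one inside each $P^{(j)}$). A second, minor difference: since the coefficients depend only on $t$, the energy estimate is pointwise in $\xi$, so the paper just covers $S^{n-1}$ by finitely many conic neighborhoods and takes the worst constants; your partition of unity $\chi_\ell(\xi)$ is harmless but unnecessary. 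The remaining steps in your outline --- multiplicities forced to be at most $3$, transfer of effective hyperbolicity from $p$ to the factor $p^{(j)}$ (via $H_p = c_j H_{p^{(j)}}$ at the critical point), and invocation of Propositions \ref{pro:tdep} and \ref{pro:tdep:2} according to $r_i\in\{1,2,3\}$ --- coincide with the paper.
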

\begin{proof} Let ${\bar\xi}\neq 0$ be arbitrarily fixed. Write $
p(0,\tau,{\bar\xi})=\prod_{j=1}^r\big(\tau-\tau_j)^{m_j}$ 
where $\sum m_j=m$ and $\tau_j$ are real and  different each other, where $m_j\leq 3$ which follows from the assumption. There exist $\delta>0$ and a conic neighborhood $U$ of ${\bar\xi}$ such that one can write
\begin{align*}
&p(t,\tau,\xi)=\prod_{j=1}^rp^{(j)}(t,\tau,\xi),\\
&p^{(j)}(t,\tau,\xi)= \tau^{m_j}+a_{j,1}(t,\xi)\tau^{m_j-1}+\cdots+ a_{j,m_j}(t,\xi)
\end{align*}
for $(t,\xi)\in (-\delta,\delta)\times U$ where $a_{j,k}(t,\xi)$ are real valued, homogeneous of degree $k$ in $\xi$ and $p^{(j)}(0,\tau,{\bar\xi})=(\tau-\tau_j)^{m_j}$.  If $(0,\tau_j,{\bar\xi})$ is a critical point of $p$, and necessarily $m_j\geq 2$, then $(0,\tau_j,{\bar\xi})$ is a critical point of $p^{(j)}$ and it is easy to see
\[
H_p(0,\tau_j,{\bar\xi})=c_j\,H_{p^{(j)}}(0,\tau_j,{\bar\xi})
\]
with some $c_j\neq 0$ and hence $H_{p^{(j)}}(0,\tau_j,{\bar\xi})$ has non-zero real eigenvalues if $H_{p}(0,\tau_j,{\bar\xi})$ does and vice versa. It is well known  that one can write, in some conic neighborhood $U$ of ${\bar\xi}$ that
\[
P=P^{(1)}P^{(2)}\cdots P^{(r)}+R
\]
where $P^{(j)}$ are differential operators in $t$ of order $m_j$ with coefficients which are poly-homogeneous symbol in $\xi$ and $R$ is a differential operators in $t$ of order at most $m-1$ with $S^{-\infty}$ (in $\xi$) coefficients. Note that the principal symbol of $P^{(j)}$ is $p^{(j)}$ and hence the assumptions in Propositions \ref{pro:tdep} and \ref{pro:tdep:2} are satisfied. Therefore thanks to Propositions \ref{pro:tdep} and \ref{pro:tdep:2}  we have 
\begin{align*}
\sum_{k=0}^{q+m_j}\big|\lr{\xi}^{q+m_j-k}\dif_t^k{\hat u}(t)\big|^2\leq C\sum_{k=0}^q\Big\{\big|\lr{\xi}^{q-k}\dif_t^q(P^{(j)}{\hat u})(t)\big|^2\\
+\int_0^t\big|\lr{\xi}^{N+q-k}\dif_t^k(P^{(j)}{\hat u})(s)\big|^2ds\Big\}
\end{align*}
 in some conic neighborhood of ${\bar\xi}$ and for $j=1,\ldots,r$. Then by induction on $j=1,\ldots, r$ one obtains
\begin{align*}
\sum_{k=0}^{q+m}\big|\lr{\xi}^{q+m-k}\dif_t^k{\hat u}(t)\big|^2\leq C\sum_{k=0}^q\Big\{\big|\lr{\xi}^{q-k}\dif_t^kh(t)\big|^2
+\int_0^t\big|\lr{\xi}^{rN+q-k}\dif_t^kh(s)\big|^2ds\Big\}
\end{align*}
where $h(t)={\hat f}(t)-R\,{\hat u}(t)$. Note that for any $k, l\in\N$ there is $C_{k,l}$ such that
\[
\big|\dif_t^k(R\,{\hat u})(t)\big|\leq C_{k,l}\lr{\xi}^{-l}\sum_{j=0}^{k+m-1}\big|\lr{\xi}^{k+m-1-j}\dif_t^j{\hat u}(t)\big|^2.
\]
Therefore one  concludes that
\begin{align*}
\sum_{k=0}^{q+m}\big|\lr{\xi}^{q+m-k}\dif_t^k{\hat u}(t)\big|\leq C_1\sum_{k=0}^{q+m}\int_0^t\big|\lr{\xi}^{q+m-k}\dif_t^k{\hat u}(s)\big|^2ds\\
+C_2\sum_{k=0}^{q+1}\int_0^t\big|\lr{\xi}^{rN+q+1-k}\dif_t^k{\hat f}(s)\big|^2ds.
\end{align*}
Then the assertion  follows from the Gronwall's lemma. Finally applying a compactness arguments one can complete the proof.
\end{proof}

\noindent
Proof of Theorem \ref{thm:tdep:kyosoku}: Let $u_j(x)\in C_0^{\infty}(\R^n)$ and hence ${\hat u}_j(\xi)\in {\mathcal S}(\R^n)$. From $P{\hat u}=0$ one can determine $\dif_t^k{\hat u}(0,\xi)$ successively  from ${\hat u}_j(\xi)$. Take $N\geq N_1+m$ and define
\[
{\hat u}_N(t,\xi)=\sum_{k=0}^N\frac{t^k}{k!}\dif_t^k{\hat u}(0,\xi)
\]
which is in $C^{\infty}(\R;{\mathcal S}(\R^n))$. With ${\hat f}=-P{\hat u}_N$ it is clear that $\dif_t^k{\hat f}(0,\xi)=0$ for $k=0,\ldots,N_1$. Apply Proposition \ref{pro:tdep:m} to the  following Cauchy problem
\[
P{\hat w}=-P{\hat u}_N={\hat f}(t,\xi),\quad \dif_t^k{\hat w}(0,\xi)=0,\;k=0,\ldots,m-1
\]
to obtain
\[
\sum_{k=0}^{m-1}\big|\lr{\xi}^{m-1-k}\dif_t^k{\hat w}(t,\xi)\big|^2\leq C\sum_{k=0}^{N_0}\int_0^t\big|\lr{\xi}^{N_0-k}\dif_t^k{\hat f}(s,\xi)\big|^2ds.
\]
Since it is clear that
\[
\sum_{k=0}^{N_0}\big|\lr{\xi}^{N_0-k}\dif_t^k{\hat f}(s,\xi)\big|^2\leq C_{N_0,N_1} \sum_{j=0}^{m-1}\big|\lr{\xi}^{N+N_0-j}{\hat u}_j(\xi)\big|^2
\]
for $0\leq s\leq \delta$ then 
noting that ${\hat u}={\hat w}+{\hat u}_N$ is a solution to the Cauchy problem \eqref{eq:tdepCPmFT} one obtains
\[
\sum_{k=0}^{m-1}\big|\lr{\xi}^{m-1-k}\dif_t^k{\hat u}(t,\xi)\big|^2\leq C'\sum_{j=0}^{m-1}\big|\lr{\xi}^{N+N_0-j}{\hat u}_j(\xi)\big|^2.
\]
Therefore, by a Paley-Wiener Theorem we prove the $C^{\infty}$ well-posedness of the Cauchy problem \eqref{eq:tdepCPm}.

%
\section{Third order hyperbolic operators with effectively hyperbolic critical points with two independent variables}
\label{sec:app:xdep}

In this section we consider the Cauchy problem for third order operators  with two independent variables in a neighborhood of the origin for $t\geq 0$;
\begin{equation}
\label{eq:xdepCP}
\left\{\begin{array}{ll}
D_t^3u+\sum_{j=0}^{2}\sum_{j+k\leq 3}a_{j,k}(t,x)D_x^kD_t^ju=0,\\[8pt]
D_t^ju(0,x)=u_j(x),\quad j=0,1,2
\end{array}\right.
\end{equation}
where the coefficients $a_{j,k}(t,x)$ ($j+k=3$) are assumed to be real valued real analytic in $(t,x)$ in a neighborhood of the origin and the principal symbol $p$
\[
p(t,x,\tau,\xi)=\tau^3+\sum_{j=0}^{2}\sum_{j+k=3}a_{j,k}(t,x)\xi^k\tau^j
\]
has only real roots in $\tau$ for  $(t,x)\in [0,T)\times U$ with some $T>0$ and a neighborhood $U$ of the origin.
\begin{theorem}
\label{thm:xdep:kyosoku}If every critical point $(0,0,\tau,1)$ is effectively hyperbolic then for any $a_{j,k}(t,x)$ with $j+k\leq 2$ which are $C^{\infty}$ near $(0,0)$  the Cauchy problem \eqref{eq:xdepCP} is $C^{\infty}$ well-posed near the origin for $t\geq 0$.
\end{theorem}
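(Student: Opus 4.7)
The plan is to transplant the strategy of Section 4 from the time-dependent setting into the two-variable setting, with the real analyticity of the principal coefficients supplying the structural information that in Section 4 came for free from the parametric smoothness in $\xi$. I would first reduce to the case $m=3$ by the same zero-order conjugation removing the $\tau^2$-coefficient and by a classical factorization of $p$ in a conic neighborhood of $(0,0,\tau_0,1)$, writing $P=P^{(1)}\cdots P^{(r)}+R$ where each $P^{(j)}$ is of order $m_j\le 3$ with a single real characteristic root at the origin of multiplicity $m_j$, and $R$ is of lower order. Since effective hyperbolicity is inherited by each factor, the theorem reduces to an a priori estimate in the triple case ($m_j=3$); the double case goes through exactly as in Section 4.4. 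So I focus on $p^{(j)}(t,x,\tau,\xi)=\tau^3-a(t,x)\xi^2\tau-b(t,x)\xi^3$ with $a(0,0)=b(0,0)=0$, $a(t,x)\ge 0$ for $t\ge 0$, and effective hyperbolicity giving $\partial_t a(0,0)\ne 0$.

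As in Section 4.1, the Weierstrass preparation theorem yields real-analytic factorizations
\[
a(t,x)=e_1(t,x)\bigl(t+\alpha(x)\bigr),\qquad \Delta(t,x)=e_2(t,x)\bigl(t^3+a_1(x)t^2+a_2(x)t+a_3(x)\bigr)
\]
with $e_1,e_2>0$, $\alpha(x)\ge 0$ analytic, and $a_j(0)=0$. The goal is to construct a finite family of scalar weights $\phi_j(t,x)$ on subregions $\omega_j\subset\{t\ge 0\}$ whose union covers a neighborhood of $(0,0)$, such that the analog of \eqref{eq:katei:D} holds on each $\omega_j$, together with $\partial_t\phi_j=\pm 1$ and $|\partial_x\phi_j|\preceq 1$. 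The natural candidates, guided by Section 4.2, are $\phi=t$ and $\phi=t-\psi(x)$ with $\psi(x)=\max(0,{\mathsf{Re}}\,\nu(x))$ for some root $\nu(x)$ of the cubic factor of $\Delta$ in $t$.

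The main obstacle is that, whereas $\nu(\xi)$ in Section 4.1 was a continuous function of the smooth parameter $\xi$ and entered only through thresholds, here the roots $\nu(x)$ of the cubic may develop Puiseux-type singularities at $x=0$, so that ${\mathsf{Re}}\,\nu(x)$ is only continuous in general and smoothness of $\psi(x)$ cannot be expected without additional preparation. This is where real analyticity is indispensable: the Newton--Puiseux theorem applied to $t^3+a_1(x)t^2+a_2(x)t+a_3(x)$, or equivalently a finite sequence of real blow-ups at $(0,0)$, yields a partition of a punctured neighborhood of the origin into finitely many analytic sectors on each of which every $\nu(x)$ is given by a single convergent Puiseux series, hence is real-analytic after a monomial change of variables $x=\pm s^q$. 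After this substitution, one can adapt Lemma \ref{lem:alnu} to give $|\nu(x)|$ a uniform lower bound in terms of $\alpha(x)$, and the case analysis carried out in the proof of Proposition \ref{pro:kihon} -- splitting on real versus complex roots and on the sign of ${\mathsf{Re}}\,\nu$ -- goes through and delivers the required inequalities $\phi_j^2\,a\preceq\Delta$, $\phi_j|\partial_t\Delta|\preceq\Delta$, $\phi_j\preceq a$ on each sector-adapted $\omega_j$.

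With the cover $\{\omega_j\}$ and weights $\phi_j$ in hand, the energy argument of Section 3 applies on each $\omega_j$ with only cosmetic changes: the diagonal symmetrizer $\Lambda$ and the change of frame $V=T^{-1}U$ still produce a weighted $L^2$ estimate of $\phi_j^{-N}\langle\Lambda V,V\rangle$, and all error terms are absorbed by $N\phi_j^{-N-1}\langle\Lambda V,V\rangle$ once $N$ is large; the new $x$-derivative terms generated by the now-$x$-dependent weight are controlled by the $\sqrt{a}$-factor already present in \eqref{eq:adphi}. These local estimates are then chained across $\{\omega_j\}$ in the order dictated by time exactly as in Section 4.3, the monomial substitution $x=\pm s^q$ is unwound, and Gronwall's lemma together with the factorization $P=P^{(1)}\cdots P^{(r)}+R$ and a Paley--Wiener argument completes the proof, mirroring Section 4.5 and the proof of Theorem \ref{thm:tdep:kyosoku}.
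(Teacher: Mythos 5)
Your proposal captures the broad architecture that the paper actually uses -- reduce to $p=\tau^3-a\xi^2\tau-b\xi^3$ with $\partial_t a(0,0)\neq0$, apply Weierstrass preparation to $a$ and $\Delta$, use Puiseux series for the roots $\nu_j(x)$ of $\bar\Delta$ on $0<\pm x<\delta$, build scalar weights from ${\mathsf{Re}}\,\nu$, and run the diagonal-symmetrizer energy on the resulting cover -- but there are several places where the passage from the $t$-dependent to the $x$-dependent setting is glossed over or handled incorrectly.

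\textbf{The bounded-derivative issue.} The weight $\varphi=t-\psi(x)$ with $\psi=\max(0,{\mathsf{Re}}\,\nu_2(x))$ must satisfy $|\partial_x\varphi|\preceq 1$ for the energy argument of Section~\ref{sec:enehyoka:1} to close. A priori a Puiseux root can carry fractional exponents, e.g.\ $x^{1/2}$, whose derivative blows up at $x=0$. Your cure is a monomial substitution $x=\pm s^q$, but if you actually change variables in the PDE this produces singular coefficients ($D_x\mapsto c\,s^{1-q}D_s$) and destroys the hypotheses; if you change variables only in the weight, you are left with the same unbounded $\partial_x\psi$. The paper avoids this entirely: it establishes directly, via the structural Lemma~\ref{lem:Dform:x} (quoting the properties of non-negative real-analytic functions from \cite[Lemma 2.1]{N1}), the bound $|d\,{\mathsf{Re}}\,\nu_j/dx|\leq C$ in \eqref{eq:difnu}. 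This is a nontrivial consequence of $\Delta\geq0$ that constrains the Puiseux exponents; it is the mechanism that makes the $x$-dependent weight admissible, and it is missing from your proposal.

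\textbf{The energy domain.} In Section~\ref{sec:enehyoka:1} the energy integral is over all of $\mathbb{R}$ with compactly supported data; in Section~\ref{sec:app:tdep} there is no integral in $x$ at all since one works on the Fourier side. In the present case neither simplification is available, and the paper integrates over the truncated space-like cone $\Omega=\{|x|\leq\delta(T-t),\,0\leq t\leq T\}$ and must show the flux term $\int_{\partial\omega_j}G_j(V)$ on the slanted boundary has the correct sign. This is precisely Lemma~\ref{lem:spacelike}, which requires checking that the symmetrized wave cone of the system dominates the slope of the boundary; it is not a cosmetic modification of Section~\ref{sec:enehyoka:1} and your proposal does not acknowledge it.

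\textbf{The double characteristic case.} You say this ``goes through exactly as in Section 4.4,'' but Section~4.4 works on the Fourier side and is unavailable here. The paper's Proposition~\ref{pro:xdep:2} handles the double case by an explicit differential-operator factorization $P=P_2P_1+\text{(second order)}$ with $P_1=D_t-bD_x$, $P_2=D_t^2-aD_x^2$, deriving separate weighted energy identities \eqref{eq:ene:P:2}--\eqref{eq:ene:P:1} for $P_1$ and $P_2$ over the same cone and composing them. This is a genuinely different argument from the one you cite, and it also replaces your proposed general pseudodifferential factorization $P=P^{(1)}\cdots P^{(r)}+R$, which the paper explicitly declines to transplant to the $x$-dependent setting (``it seems to be hard to apply the same arguments as in Section~\ref{sec:proof:tdep:m}''). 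For $m=3$ only the single/double split is needed and it is algebraic, so you should invoke that directly rather than the Section~4.5 machinery. Finally, the well-posedness is concluded not by Paley--Wiener (that is the $t$-dependent case) but by an approximation argument with the Cauchy--Kowalevsky theorem, which is what the analyticity of the principal part is ultimately used for.
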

%

\subsection{Key proposition, $x$ dependent case}

 Assume that $p$ has a triple characteristic root ${\bar\tau}$ at $(0,0,1)$ hence $(0,0,{\bar\tau},1)$ is a critical point. Making a suitable change of local coordinates $t=t'$, $x=x(t',x')$ such that $x(0,x')=x'$ one can assume that $a_{2,1}(t,x)=0$ so that
 \begin{equation}
 \label{eq:3form:x}
 p(t,x,\tau,\xi)=\tau^3-a(t,x)\xi^2\tau-b(t,x)\xi^3.
 \end{equation}
 Since the triple characteristic root is now ${\bar\tau}=0$ hence $b(0,0)=a(0,0)=0$ and the hyperbolicity condition implies that
 \begin{equation}
 \label{eq:cond:x}
 \Delta(t,x)=4\,a(t,x)^3-27\,b(t,x)^2\geq 0,\quad (t,x)\in [0,T)\times U.
 \end{equation}
 Note that $\dif_xa(0,0)=\dif_tb(0,0)=\dif_xb(0,0)=0$ which follows from Lemma \ref{lem:NiP} then it is clear that
 \[
{\rm det}\big(\lambda I- F_p(0,0,0,1)\big)
=\lambda^2\big(\lambda^2-(\dif_ta(0,0))^2\big).
\]
This implies $
\dif_ta(0,0)\neq 0$ 
since $(0,0,0,1)$ is assumed to be effectively hyperbolic.

A key proposition corresponding to Proposition \ref{pro:kihon} is obtained by applying similar arguments as in Section \ref{sec:tdep}   together with  some observations on non-negative real analytic functions with two independent variables given in \cite[Lemma 2.1]{N1} (see also \cite{N3}). We just give a sketch of the arguments. From the Weierstrass' preparation theorem there is a neighborhood  of $(0,0)$ where one can write
\[
\Delta(t,x)=e_2(t,x)\big\{t^3+a_1(x)t^2+a_2(x)t+a_3(x)\big\}
\]
where $e_2>0$ and $a_j(x)$ are real valued, real analytic with $a_j(0)=0$. Denote
\[
{\bar\Delta}(t,x)=t^3+a_1(x)t^2+a_2(x)t+a_3(x)
\]
then the next lemma corresponds to Lemma \ref{lem:Dkon}
\begin{lem}
\label{lem:Dform:x}There exists $\delta>0$ such that, in each interval $0<\pm \,x<\delta$,  one can write
\begin{equation}
\label{eq:Dform:x1}
{\bar\Delta}(t,x)=\big|t-\nu_2(x)\big|^2(t-\nu_1(x))
\end{equation}
where $\nu_1(x)$ is real valued with $\nu_1(x)\leq 0$ or
\begin{equation}
\label{eq:Dform:x2}
{\bar\Delta}(t,x)=\prod_{k=1}^3\big(t-\nu_k(x)\big)
\end{equation}
where $\nu_k(x)$ are real valued  with $\nu_k(x)\leq 0$, in both cases $\nu_j(x)$  are expressed as convergent Puiseux series;
\[
\nu_k(x)=\sum_{j\geq 0}C_{k,j}^{\pm}(\pm\,x)^{j/p_j},\quad ( \text{for some}\;\;p_j\in\N )
\]
on $0<\pm x<\delta$. In all cases there is $C>0$ such that 
\begin{equation}
\label{eq:difnu}
\big|d{\mathsf{Re}}\,\nu_j(x)/dx\big|\leq C,\quad 0<|x|<\delta.
\end{equation}
\end{lem}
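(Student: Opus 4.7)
The plan is to combine the Newton--Puiseux theorem with the non-negativity condition $\bar{\Delta}(t,x)\geq 0$ for $t\geq 0$ to obtain both the factorizations and the regularity of the real parts of the roots.

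I view $\bar{\Delta}(t,x)=t^3+a_1(x)t^2+a_2(x)t+a_3(x)$ as a monic polynomial of degree $3$ in $t$ with real analytic coefficients in $x$ vanishing at $x=0$. Its three roots $\nu_k(x)$ are continuous with $\nu_k(0)=0$. By the Newton--Puiseux theorem, there exist $\delta>0$ and a common denominator $p\in\N$ such that on each punctured half-interval $0<\pm x<\delta$ the roots admit convergent Puiseux expansions $\nu_k(x)=\sum_{j\geq 1}C^{\pm}_{k,j}(\pm x)^{j/p}$, $k=1,2,3$; the constant term vanishes because $\nu_k(0)=0$. Shrinking $\delta$ further, the $t$-discriminant of $\bar{\Delta}$ (an analytic function of $x$) keeps a constant sign on each half-interval, so the configuration of the roots is constant there. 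Since the $a_j(x)$ are real, either all three roots are real, giving \eqref{eq:Dform:x2}, or exactly one is real while the other two form a complex conjugate pair, giving \eqref{eq:Dform:x1}.

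The non-positivity of the real roots then follows from $\bar{\Delta}(t,x)\geq 0$ on $[0,\delta]$. In the all-real case, if $\max_k\nu_k(x)>0$ the cubic $\prod_k(t-\nu_k(x))$ would change sign as $t$ crosses its largest root inside $[0,\delta]$, contradicting non-negativity; hence every $\nu_k(x)\leq 0$. In the mixed case, $(t-\nu_1)|t-\nu_2|^2\geq 0$ at $t=0^+$ directly forces $\nu_1(x)\leq 0$.

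It remains to prove the Lipschitz bound \eqref{eq:difnu}. Termwise differentiation of the Puiseux expansion gives contributions $(j/p)\,\mathrm{Re}(C^{\pm}_{k,j})(\pm x)^{j/p-1}$, so the required boundedness of $d\,\mathrm{Re}\,\nu_k/dx$ on $0<|x|<\delta$ is equivalent to $\mathrm{Re}(C^{\pm}_{k,j})=0$ for every $j<p$. In the mixed case, Vieta's relation $2\,\mathrm{Re}\,\nu_2(x)=-a_1(x)-\nu_1(x)$ together with the analyticity of $a_1$ reduces the bound on $\mathrm{Re}\,\nu_2$ to the one on the real root $\nu_1$, so the task is to show that the real part of a Puiseux root of a polynomial that is non-negative on $t\geq 0$ has no fractional-power leading terms. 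This is precisely the content of \cite[Lemma 2.1]{N1}, which I intend to quote. The main obstacle is exactly this last step: the Newton--Puiseux theorem on its own supplies only H\"older regularity of an individual root, and the upgrade to Lipschitz regularity of the real parts is a genuinely non-trivial consequence of the positivity of $\bar{\Delta}$ on $t\geq 0$ that must be imported from \cite[Lemma 2.1]{N1}.
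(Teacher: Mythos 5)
Your overall route --- Weierstrass preparation (already done in the surrounding text) followed by Newton--Puiseux, constancy of the root configuration on each punctured half-interval, nonnegativity of $\bar\Delta$ on $t\ge 0$ to locate the real roots, and an appeal to \cite[Lemma 2.1]{N1} for the Lipschitz bound \eqref{eq:difnu} --- is the same route the paper intends; the paper itself only states the lemma and points to \cite[Lemma 2.1]{N1} (and \cite{N3}) for the analogous observations.

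Two remarks are worth making. First, your dichotomy ``all three real $\Rightarrow$ \eqref{eq:Dform:x2}, else \eqref{eq:Dform:x1}'' is slightly too coarse: if two of the real roots coincide identically on a half-interval (the $t$-discriminant of $\bar\Delta$ vanishes there) the product $\prod_k(t-\nu_k)=(t-\nu_1)(t-\nu_2)^2$ is filed under the form \eqref{eq:Dform:x1}, and in that case the coinciding root $\nu_2$ is \emph{not} forced to be $\leq 0$; your sign argument via the ``largest root'' only works for three simple real roots. This matches the convention already used in Lemma~\ref{lem:Dkon}, which you should mirror. Second, and more substantively, the status of the appeal to \cite[Lemma 2.1]{N1} deserves care. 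Your Vieta reduction $2\,\mathrm{Re}\,\nu_2=-a_1-\nu_1$ is a good observation but it merely transfers the problem to the simple real root $\nu_1$, and $\bar\Delta$ is a cubic that is nonnegative only for $t\geq 0$ --- it is \emph{not} a nonnegative real-analytic function of $(t,x)$, which is what the cited lemma (as the paper describes it) concerns; a cubic can never be nonnegative in a full neighbourhood. Thus \cite[Lemma 2.1]{N1} cannot be invoked verbatim for $\nu_1$; what actually forces the absence of a fractional leading power in $\nu_1$ is the combination of $\nu_1\le 0$, analyticity of the $a_j$, and the constraint $(\mathrm{Im}\,\nu_2)^2\ge 0$ coming from the conjugate-pair structure, which one must extract by an explicit Newton-polygon/Vieta computation rather than by quoting the lemma directly. (In the genuinely all-simple-real case the bound is in fact elementary: $a_1=\sum_k|\nu_k|=O(x)$ bounds each $|\nu_k|$ by $O(x)$, which kills all exponents below $1$ in the Puiseux expansion; no reference is needed there.) So the gap you yourself flag as ``the main obstacle'' is real, and the short-circuit ``this is precisely the content of \cite[Lemma 2.1]{N1}'' overstates what that lemma gives you.
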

Next we show a counterpart of Lemma \ref{lem:alnu}. Note that  one can write
\[
a(t,x)=e_1(t,x)\big(t+\alpha(x)\big)
\]
where $e_1>0$ and $\alpha(x)$ is real analytic with $\alpha(0)=0$ and $\alpha(x)\geq 0$ in $|x|<\delta$.
\begin{lem}
\label{lem:alnu:x}There exist $\varep>0$ and $\delta>0$ such that one can find $j^{\pm}\in \{1,2,3\}$ such that
\[
\varep^{-1}|\nu_{j^{\pm}}(x)|\geq \alpha(x),\quad 0<\pm x<\delta.
\]
\end{lem}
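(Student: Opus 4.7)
The plan is to build on the proof of Lemma \ref{lem:alnu} and upgrade its pointwise conclusion to a uniform one on each one-sided interval $(0,\pm\delta)$. First I would check that the argument of Lemma \ref{lem:alnu} goes through verbatim with $\xi$ replaced by the $x$-parameter: bounding $\alpha(x)$ and the Taylor coefficients $\hat b_k(x)$ of \eqref{eq:bhat} uniformly on a fixed neighborhood of $x=0$ — which is possible by the real analyticity of $a$ and $b$ and the Weierstrass factorization — yields the pointwise statement that for some $\varepsilon_0>0$ and every $x$ in a punctured neighborhood of $0$ there is an index $j(x)\in\{1,2,3\}$ with $\varepsilon_0^{-1}|\nu_{j(x)}(x)|\geq\alpha(x)$.

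The new content of Lemma \ref{lem:alnu:x} beyond this is that $j$ can be chosen independently of $x$ on each side, and I would extract this by pigeonhole together with the analytic structure provided by Lemma \ref{lem:Dform:x}. Since $j(x)$ takes only three values, for each sign some index, call it $j^+$, satisfies $\varepsilon_0^{-1}|\nu_{j^+}(x_n)|\geq\alpha(x_n)$ along a sequence $x_n\downarrow 0$; similarly one obtains $j^-$ along a sequence $x_n\uparrow 0$. To upgrade from a sequential statement to an interval statement I would invoke the Puiseux expansions of Lemma \ref{lem:Dform:x}: on $(0,\delta)$ each $\nu_k(x)$ is a convergent Puiseux series, so $|\nu_{j^+}(x)|^2=({\mathsf{Re}}\,\nu_{j^+})^2+({\mathsf{Im}}\,\nu_{j^+})^2$ is too, and if not identically zero its leading term reads $|\nu_{j^+}(x)|\sim c_j x^q$ with $c_j>0$ and $q>0$. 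The real analytic nonnegative function $\alpha$ either vanishes identically near $0$ (the trivial case) or satisfies $\alpha(x)\sim c_\alpha x^m$ with $c_\alpha>0$ and $m\in\N$. The inequality along $x_n\downarrow 0$ then forces $q\leq m$; if $q<m$ the ratio $|\nu_{j^+}|/\alpha$ tends to $+\infty$ at $0^+$ and the desired bound holds with any $\varepsilon'>0$ on a small interval, while if $q=m$ the ratio tends to $c_j/c_\alpha>0$ and the bound holds with, say, $\varepsilon'=c_j/(2c_\alpha)$ on a small interval. The analogous argument on $(-\delta,0)$ supplies $j^-$, and taking the smaller of the two constants $\varepsilon'$ gives the claim.

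The main obstacle I expect is the first step: confirming that the constants $B$, $C$ and $\varepsilon$ produced in the Lemma \ref{lem:alnu} argument can be chosen uniformly in $x$ across the transition between the two cases \eqref{eq:Dform:x1} and \eqref{eq:Dform:x2} of Lemma \ref{lem:Dform:x}, since the expression of ${\hat b}$ as a polynomial in $t$ with coefficients $\hat b_k(x)$ must remain well-controlled on both sides of $x=0$. Once that pointwise uniformity is in hand, the pigeonhole plus Puiseux-asymptotic argument is a short asymptotic calculation.
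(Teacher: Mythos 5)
Your argument is correct and is the natural reconstruction of the proof that the paper omits (the paper states Lemma~\ref{lem:alnu:x} without proof, pointing only to the analogy with Lemma~\ref{lem:alnu} and the Puiseux structure from Lemma~\ref{lem:Dform:x}): transfer the pointwise contradiction argument to the $x$-variable, pigeonhole a fixed index along a sequence $x_n\to 0^\pm$, and use the Puiseux asymptotics of $|\nu_j(x)|$ and $\alpha(x)$ to extend the inequality from a sequence to a one-sided interval. The ``main obstacle'' you flag is in fact no obstacle: as in the proof of Lemma~\ref{lem:alnu}, the constants $B$, $C$, $\varepsilon$ are suprema of $\alpha$, of the Taylor coefficients of ${\hat b}=3\sqrt{3}\,b/(2e_1^{3/2})$, and of $E=e_2/(4e_1^3)$ over a fixed two-sided neighborhood of $(0,0)$, so they are chosen before and independently of which factorization of ${\bar\Delta}$ occurs on a given side, and the pointwise argument transfers verbatim.
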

Recall that, choosing a smaller $\delta>0$ if necessary, one can assume that any two of ${\mathsf{Re}}\,\nu_j(x)$, ${\mathsf{Im}}\,\nu_j(x)$, $\nu(x)\equiv 0$ are either different or coincide in each interval $0<\pm x<\delta$.
Denote
\[
\psi(x)=\max{\big\{0,{\mathsf{Re}}\,\nu_2(x)\big\}},\quad |x|<\delta
\]
and define
\[
\begin{cases}
\phi_1=t,&\Omega_1=\{(t,x)\mid |x|<\delta, \; 0\leq t\leq \psi(x)/2\}\\
\phi_2=t-\psi(x),&\Omega_2=\{(t,x)\mid |x|<\delta,  \;\psi(x)/2\leq t\leq T\}
\end{cases}
\]
with a small $T>0$. If $\psi=0$ then $\phi_1=\phi_2=t$ and $\Omega_2=\{(t,x)\mid |x|<\delta, \; 0\leq t\leq T\}$.
 Now we have a key proposition corresponding to Proposition \ref{pro:kihon};
\begin{prop}
\label{pro:kihon:x} There exist $\delta>0$, $T>0$ and $C>0$ such that
\begin{equation}
\label{eq:kihon:x}
\phi_j^2\,a\leq C\Delta,\quad |\phi_j|\,|\dif_t\Delta|\leq C\Delta,\quad |\phi_j|\leq C a
\end{equation}
for $(t,x)\in \Omega_j$ and $j=1,2$.
\end{prop}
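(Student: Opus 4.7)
The plan is to reduce the problem to that of Proposition \ref{pro:kihon} by mimicking its proof almost verbatim, using the Weierstrass preparation theorem factorizations in place of the Malgrange factorizations used in the time-dependent case. First I would replace $\Delta$ and $a$ by ${\bar\Delta}(t,x)$ and $t+\alpha(x)$, using $\Delta=e_2\bar\Delta$ with $e_2>0$ and $a=e_1(t+\alpha(x))$ with $e_1>0$; since these positive factors are bounded on a compact neighborhood of $(0,0)$, it suffices to prove \eqref{eq:kihon:x} with $\Delta$ and $a$ replaced by ${\bar\Delta}$ and $t+\alpha$.

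Next I would dispatch the two easier inequalities. For $|\phi_j| \le Ca$, note $|\phi_j| \le t \le t+\alpha(x)$ on both $\Omega_j$ since $\alpha(x)\ge 0$ for $|x|<\delta$. For $|\phi_j||\dif_t\Delta| \le C\Delta$, differentiate the factorizations from Lemma \ref{lem:Dform:x}: in case \eqref{eq:Dform:x1},
\[
\Big|\frac{\dif_t\bar\Delta}{\bar\Delta}\Big| \le \frac{1}{t+|\nu_1(x)|}+\frac{2|t-{\mathsf{Re}}\,\nu_2(x)|}{|t-\nu_2(x)|^2} \le \frac{1}{\phi_1}+\frac{2}{|\phi_2|},
\]
and in case \eqref{eq:Dform:x2} the analogous bound is $3/\phi_1$. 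Combined with the observations $|\phi_2|\ge t=\phi_1$ on $\Omega_1$ and $t=\phi_1\ge |\phi_2|$ on $\Omega_2$, this yields $|\phi_j||\dif_t\bar\Delta|\le 3\bar\Delta$ on each $\Omega_j$, exactly as in the time-dependent case.

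The main inequality $\phi_j^2(t+\alpha)\le C\bar\Delta$ requires the most care and is the principal obstacle. Here I would invoke Lemma \ref{lem:alnu:x} to produce, on each half-interval $0<\pm x<\delta$, an index $j^\pm\in\{1,2,3\}$ with $\varepsilon^{-1}|\nu_{j^\pm}(x)|\ge \alpha(x)$, so that $t+|\nu_{j^\pm}(x)|\ge \varepsilon(t+\alpha(x))$. Then a case analysis according to whether the dominating root is $\nu_1$ (real) or $\nu_2$ (possibly complex), and, in the latter case, whether ${\mathsf{Re}}\,\nu_2\le|{\mathsf{Im}}\,\nu_2|$ or ${\mathsf{Re}}\,\nu_2>|{\mathsf{Im}}\,\nu_2|$, reproduces exactly the chain of elementary estimates in the proof of Proposition \ref{pro:kihon}, with $\phi_1\phi_2$ playing the role of the product of the distances to the real parts of the roots, and in the last subcase the domain split between $\Omega_1$ and $\Omega_2$ is necessary to compare $t$ or $|t-{\mathsf{Re}}\,\nu_2|$ to $t+\alpha$.

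The delicate point is that all of these estimates must be uniform in $x$ on $0<|x|<\delta$, even though $\nu_k(x)$ is only given as a convergent Puiseux series and $\psi(x)$ is a continuous but non-smooth function of $x$. This uniformity is granted because $\alpha(0)=\nu_k(0)=0$, $e_1, e_2$ are bounded away from $0$ and $\infty$ near the origin, and the constants in Lemma \ref{lem:alnu:x} depend only on the germs at $(0,0)$; consequently $\delta$ and $T$ can be chosen once and for all, independent of $x$, and the same constant $C$ controls both sides $\pm x>0$. The bound \eqref{eq:difnu} on $|d{\mathsf{Re}}\,\nu_j/dx|$ is not needed for \eqref{eq:kihon:x} itself but will matter later when estimating $\dif_x$-derivatives; here only pointwise inequalities on $(t,x)\in\Omega_j$ are at issue, so the argument of Proposition \ref{pro:kihon} transports verbatim.
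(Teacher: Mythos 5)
Your proposal is correct and takes essentially the same approach as the paper. The paper itself only sketches Proposition~\ref{pro:kihon:x}, explicitly deferring to ``similar arguments as in Section~\ref{sec:tdep}'' together with the real-analytic structure from Lemma~\ref{lem:Dform:x}, and your step-by-step transcription of the proof of Proposition~\ref{pro:kihon} --- the reduction to $\bar\Delta$ and $t+\alpha$, the two easy inequalities via the factored form and the comparison of $|\phi_1|$ with $|\phi_2|$ on each $\Omega_j$, and the case analysis for $\phi_j^2(t+\alpha)\preceq\bar\Delta$ driven by Lemma~\ref{lem:alnu:x} --- is exactly what that sketch intends, including the observation that the Puiseux regularity and the fixed choice of $j^{\pm}$ on each half-interval give the required uniformity in $x$.
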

%

\subsection{Energy estimates, $x$ dependent case}

 With $U={^t}(D_t^2u,D_xD_tu, D_x^2u)$ the equation \eqref{eq:xdepCP} can be written
\begin{equation}
\label{eq:eqU:x}
\begin{split}
\frac{\dif}{\dif t}U&=\begin{bmatrix}0&a(t,x)&b(t,x)\\
                                    1&0&0\\
                                    0&1&0\\
                                    \end{bmatrix}\!\frac{\dif}{\dif x}U\\[3pt]
                                    &+i\begin{bmatrix}
                                    b_1(t,x)&b_2(t,x)&b_3(t,x)\\
                                    0&0&0\\
                                    0&0&0\\
                                    \end{bmatrix}U+\begin{bmatrix}i{ f}\;\\
                                    0\\
                                    0\\
                                    \end{bmatrix}
                                   \\[4pt]
                                   &=A(t,x)\dif_xU+BU+F.
                                    \end{split}
\end{equation}
Let $T(t,x)$ be the orthonormal matrix introduced in Section \ref{sec:defT} such that with $V=T^{-1}U$ the equation \eqref{eq:eqU:x} becomes
\begin{equation}
\label{eq:eqV:x}
\dif_tV={\mathcal A}\dif_xV+{\mathcal B}V+{\tilde F}
\end{equation}
where 
\[
{\mathcal A}=T^{-1}AT,\quad {\mathcal B}=(\dif_tT^{-1})T-{\mathcal A}(\dif_xT^{-1})T+T^{-1}BT,\quad {\tilde F}=T^{-1}F.
\]
Let $\omega$ be an open domain in $\R^2$ and let $g\in C^1(\omega)$ be a positive scalar function. Denote by $\dif\omega$ the boundary of $\omega$ equipped with the usual orientation.  Let
\[
G(V)=g\lr{\Lambda V,V}dx+g\lr{\Lambda{\mathcal A}V,V}dt
\]
then one has
\begin{equation}
\label{eq:ene:tosiki}
\begin{split}
2\,{\mathsf{Re}}\,\int_{\omega}g&\lr{\Lambda V,{\mathcal B}V+{\tilde F}}\,dxdt=-\int_{\dif\omega}G(V)\\
&-\int_{\omega}\Big\{(\dif_tg)\lr{\Lambda V,V}+g\lr{(\dif_t\Lambda)V,V}\Big\}\,dxdt\\
&+\int_{\omega}\Big\{(\dif_xg)\lr{\Lambda{\mathcal A}V,V}+g\lr{\dif_x(\Lambda{\mathcal A})V,V}\Big\}\,dxdt.
\end{split}
\end{equation}
Here make a remark on the boundary term. Denote
\[
\tau_{max}=\max_{(t,x)\in [0,T]\times U}\big|\tau_j(t,x)\big|
\]
where $\tau_j(t,x)$, $j=1,2,3$ are characteristic roots of $p(t,x,\tau,1)$. 
\begin{lem}
\label{lem:spacelike} Let $\Gamma: [a,b]\ni x\mapsto (f(x),x)$ be a space-like curve, that is
\[
1>\tau_{max}\,\big|f'(x)\big|,\quad x\in [a,b].
\]
Then one has
\[
\int_{\Gamma}G(V)\geq 0.
\]
\end{lem}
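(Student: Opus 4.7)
The plan is to parametrize $\Gamma$ by $x$, so that $dt=f'(x)\,dx$ along $\Gamma$ and
\[
\int_{\Gamma}G(V)=\int_a^b g(f(x),x)\,\lr{\big(\Lambda+f'(x)\,\Lambda{\mathcal A}\big)V,V}\,dx.
\]
Since $g>0$ and $V$ can be any vector in $\C^3$ at each $x$, the lemma will follow once the pointwise matrix inequality $\Lambda+f'(x)\Lambda{\mathcal A}\ge 0$ is established. Using $\Lambda={^t}TST$, $\Lambda{\mathcal A}={^t}TSAT$ (both symmetric), the orthogonality of $T$, and the substitution $U=TV$, this reduces to showing that the real symmetric matrix $S+f'(x)SA$ is positive semidefinite on $\R^3$ under the space-like assumption $\tau_{max}|f'(x)|<1$.

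The next step exploits the spectral structure of $A$. Since $A$ is (up to the usual permutation of basis) the companion matrix of the reduced principal symbol $p(\tau)=\tau^3-a\tau-b$, its eigenvalues are exactly the characteristic roots $\tau_1,\tau_2,\tau_3$, real by hyperbolicity and bounded in absolute value by $\tau_{max}$, with explicit eigenvectors $v_j={^t}(\tau_j^2,\tau_j,1)$. The identity $SA=A^TS$, which is precisely the symmetry of $SA$, says that $A$ is self-adjoint in the inner product induced by $S$; a short calculation using $\tau_j^3=a\tau_j+b$ yields
\[
v_j^T S v_j=p'(\tau_j)^2\ge 0,\qquad v_j^T S v_k=0\ \text{whenever}\ \tau_j\neq\tau_k.
\]
When the $\tau_j$ are pairwise distinct, every $U\in\R^3$ writes as $U=\sum c_jv_j$ and
\[
U^T\big(S+f'(x)SA\big)U=\sum_{j=1}^3 c_j^2\big(1+f'(x)\tau_j\big)p'(\tau_j)^2\ge 0,
\]
because the space-like hypothesis forces each factor $1+f'(x)\tau_j$ to be strictly positive.

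The delicate point, which I expect to be the only nontrivial piece, is the degenerate case $\Delta(t,x)=0$, where some $\tau_j$ coincide and the $v_j$ may fail to span $\R^3$. I plan to handle this either (i) by a density argument, perturbing $(a,b)$ slightly inside the hyperbolic region $\{\Delta>0\}$ so that the roots become simple, applying the computation above, and passing to the limit using continuity of $S+f'SA$ in $(a,b)$; or (ii) intrinsically, by noting that $SA=A^TS$ forces $A$ to preserve $\ker S$, so that $A$ descends to an operator on $\R^3/\ker S$ that is self-adjoint with respect to the genuine inner product induced by $S$, has eigenvalues among the $\tau_j$, and therefore makes $I+f'(x)A$ positive definite on the quotient; this yields $\lr{S(I+f'(x)A)U,U}\ge 0$ on all of $\R^3$. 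Once the pointwise inequality is in place, integrating along $\Gamma$ completes the proof.
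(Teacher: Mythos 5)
Your proof is correct, but it takes a genuinely different route from the paper's. You pass from the $\Lambda$-picture back to $S$ (via $\Lambda = {^t}TST$, $\Lambda\mathcal{A} = {^t}TSAT$) and prove the matrix inequality $S + f'\,SA \geq 0$ by diagonalizing in the Vandermonde basis $v_j = {^t}(\tau_j^2, \tau_j, 1)$, using the classical B\'ezoutian identities $v_j^T S v_k = 0$ for $\tau_j\neq\tau_k$ and $v_j^T S v_j = p'(\tau_j)^2$; each mode then contributes $c_j^2\big(1 + f'\tau_j\big)p'(\tau_j)^2 \geq 0$ because $|f'\tau_j| \leq |f'|\,\tau_{max} < 1$. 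The paper stays in the $\Lambda$-picture and argues abstractly: it applies Cauchy--Schwarz in the semi-inner product induced by $\Lambda$ to reduce the goal to $|f'|^2\lr{\Lambda\mathcal{A}V, \mathcal{A}V} \leq \lr{\Lambda V,V}$, then concludes from the determinant identity $\det\big(\lambda\Lambda - |f'|^2\,{^t\!}\mathcal{A}\Lambda\mathcal{A}\big) = (\det\Lambda)\,\det\big(\lambda I - |f'|^2\mathcal{A}^2\big)$ together with the fact that the eigenvalues of $\mathcal{A}$ are the characteristic roots $\tau_j$. Your version is more explicit and exhibits the positivity mode by mode with the weight $p'(\tau_j)^2$; the paper's is shorter and needs neither the Vandermonde vectors nor the B\'ezoutian bilinear-form identity. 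Both routes have the same soft spot on the degenerate locus (confluent $\tau_j$ for you, singular $\Lambda$ for the paper), which you correctly flag and resolve either by perturbation into $\{\Delta>0\}$ or by passing to the quotient by $\ker S$; the paper leaves the corresponding continuity step implicit.
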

\begin{proof}
Since $g(t,x)>0$ is scalar function it suffices to prove
\[
\lr{\Lambda(f(x),x)V,V}+\lr{f'(x)\Lambda(f(x),x){\mathcal A}(f(x),x)V,V}\geq 0,\;\;\forall V\in \C^3.
\]
To simplify notation we denote $\Lambda(f(x),x)$ and ${\mathcal A}(f(x),x)$ by  just $\Lambda$ and ${\mathcal A}$. Noting that $\Lambda{\mathcal A}={^t}\!{\mathcal A}\Lambda$ one has
\[
\big|\lr{f'\Lambda{\mathcal A}V,V}\big|=\big|f'\lr{\Lambda V,{\mathcal A}V}\big|\leq \lr{\Lambda V,V}^{1/2}\lr{\Lambda{\mathcal A}V,{\mathcal A}V}^{1/2}|f'|.
\]
Therefore to prove the assertion it suffices to show 
\[
\lr{\Lambda{\mathcal A}V,{\mathcal A}V}|f'|^2\leq \lr{\Lambda V,V}
\]
that is, the maximal eigenvalue of ${|f'|^2}({^t\!}{\mathcal A}\Lambda{\mathcal A})$ with respect to $\Lambda$ is at most $1$. From $^t\!{\mathcal A}\Lambda=\Lambda{\mathcal A}$ it follows that
\begin{align*}
{\rm det}\big(\lambda \Lambda-|f'|^2(^t\!{\mathcal A}\Lambda{\mathcal A})\big)={\rm det}\big(\lambda \Lambda-|f'|^2\Lambda{\mathcal A}^2)\big)\\
=({\rm det}\,\Lambda)\,{\rm det}\big(\lambda I-|f'|^2{\mathcal A}^2\big).
\end{align*}
Since $\tau_j$ are the eigenvalues of ${\mathcal A}$ we see that the eigenvalues  of $|f'|^2{\mathcal A}^2$ is at most $|f'(x)|^2\,\tau_{max}^2<1$ hence the assertion.
\end{proof}
Define $\Omega=\{(t,x)\mid |x|\leq \delta(T-t), 0\leq t\leq T\}$ and 
\begin{equation}
\label{eq:space:like}
\begin{split}
&\varphi_1=t,\quad \omega_1=\{(t,x)\mid |x|\leq \delta (T-t), 0\leq t\leq \psi(x)/2\},\\
&\varphi_2=\psi(x)-t,\quad \omega_2=\{(t,x)\mid |x|\leq \delta(T-t), \psi(x)/2\leq t\leq \psi(x)\},\\
&\varphi_3=t-\psi(x),\quad \omega_3=\{(t,x)\mid |x|\leq \delta(T-t), \psi(x)\leq t\leq T\}
\end{split}
\end{equation}
where $\delta>0, T>0$ are small such that the lines $|x|=\delta(T-t)$, $0\leq t\leq T$ are space-like. 
Thanks to Proposition \ref{pro:kihon:x}  and Lemma \ref{lem:Dform:x} it follows that
\begin{equation}
\label{eq:katei:x}
\varphi_j^2\leq C\lambda_1,\;\;\varphi_j|\dif_t\lambda_1|\leq C\lambda_1,\;\;\varphi_j\leq C\lambda_2,\;\;\big|\dif_x\varphi_j\big|\leq C,\quad (t,x)\in \omega_j.
 \end{equation}
Apply \eqref{eq:ene:tosiki} with 
\begin{align*}
g=g_j=\varphi_j^{2(-1)^jN-1},\;\;
G(V)= G_j(V)=g_j\lr{\Lambda V,V}dx+g_j\lr{\Lambda{\mathcal A}V,V}dt
\end{align*}
and $\omega=\omega_j$ then from the arguments in Section \ref{sec:enehyoka:1} one obtains
\begin{lem}
\label{lem:enehyoka:x}There exist $N_0$, $C>0$ such that 
\[
C\int_{\omega_j}\varphi_jg_j\|F\|^2dxdt\geq -\int_{\dif \omega_j}G_j(V)+N\int_{\omega_j}\varphi_j^{-1}g_j\lr{\Lambda V,V}dxdt
\]
for $N\geq N_0$.
\end{lem}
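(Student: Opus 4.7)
The plan is to apply the energy identity \eqref{eq:ene:tosiki} on $\omega_j$ with the weight $g_j = \varphi_j^{2(-1)^jN-1}$, and then show that every term arising except the boundary integral $\int_{\partial\omega_j}G_j(V)$ and the $F$--term is either absorbed into $N\int_{\omega_j}\varphi_j^{-1}g_j\lr{\Lambda V,V}dxdt$ (coming from $-\dif_tg_j$) or bounded by $C\int_{\omega_j}\varphi_j g_j\|F\|^2 dxdt$ by Cauchy--Schwarz.

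First I would compute $\dif_tg_j$. Since $\dif_t\varphi_j = (-1)^{j-1}$, one has
\[
-\dif_tg_j = \bigl(2N+(-1)^{j-1}\bigr)\varphi_j^{-1}g_j,
\]
which is a positive quantity of order $N\varphi_j^{-1}g_j$. Thus, rearranging \eqref{eq:ene:tosiki},
\begin{align*}
\int_{\omega_j}(-\dif_tg_j)\lr{\Lambda V,V}dxdt =\;& -\int_{\partial\omega_j}G_j(V) - \int_{\omega_j}g_j\lr{(\dif_t\Lambda)V,V}dxdt\\
&+ \int_{\omega_j}(\dif_xg_j)\lr{\Lambda\mathcal AV,V}dxdt + \int_{\omega_j}g_j\lr{\dif_x(\Lambda\mathcal A)V,V}dxdt\\
&- 2\,{\mathsf{Re}}\int_{\omega_j}g_j\lr{\Lambda V,\mathcal BV+\tilde F}dxdt,
\end{align*}
and the strategy is to move the good term on the left to the right with a factor $N$ and show every remaining term on the right is dominated either by a small multiple of $N\int\varphi_j^{-1}g_j\lr{\Lambda V,V}$ or by $C\int\varphi_j g_j\|F\|^2$.

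Next I would invoke the pointwise control \eqref{eq:katei:x} (which plays in $\omega_j$ the role that \eqref{eq:katei:2} plays in Section \ref{sec:enehyoka:1}) and reuse verbatim the estimates already derived there. Specifically: Lemma \ref{lem:katei:D} gives $|g_j\lr{(\dif_t\Lambda)V,V}|\preceq \varphi_j^{-1}g_j\lr{\Lambda V,V}$; the estimate \eqref{eq:adphi}, together with $|\dif_x\varphi_j|\leq C$ from \eqref{eq:katei:x}, gives $|(\dif_xg_j)\lr{\Lambda\mathcal AV,V}|\preceq N\sqrt{a}\,\varphi_j^{-1}g_j\lr{\Lambda V,V}$ which is absorbable since $\sqrt a$ can be made arbitrarily small by shrinking $T$; the estimates \eqref{eq:DdifD:A} and \eqref{eq:DdifA} give $|g_j\lr{\dif_x(\Lambda\mathcal A)V,V}|\preceq \varphi_j^{-1}g_j\lr{\Lambda V,V}$; and the estimates of $(\dif_tT^{-1})T$ and ${\mathcal A}(\dif_xT^{-1})T$ in Section \ref{sec:enehyoka:1} together with the bound \eqref{eq:FtoD} for the bounded zeroth--order perturbation $T^{-1}BT$ yield $|g_j\lr{\Lambda\mathcal BV,V}|\preceq \varphi_j^{-1}g_j\lr{\Lambda V,V}$. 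Finally, by Cauchy--Schwarz,
\[
2\bigl|g_j\lr{\Lambda V,\tilde F}\bigr|\leq \varphi_j^{-1}g_j\lr{\Lambda V,V} + \varphi_j g_j\lr{\Lambda \tilde F,\tilde F} \leq \varphi_j^{-1}g_j\lr{\Lambda V,V} + C\varphi_j g_j\|F\|^2,
\]
since $\|\Lambda\|$ is bounded and $T$ is orthogonal.

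Collecting these bounds produces a constant $C_0$, independent of $N$, such that
\[
(2N-C_0)\int_{\omega_j}\varphi_j^{-1}g_j\lr{\Lambda V,V}dxdt \leq -\int_{\partial\omega_j}G_j(V) + C\int_{\omega_j}\varphi_j g_j\|F\|^2 dxdt.
\]
Choosing $N_0 > C_0$ then gives the asserted inequality for $N\geq N_0$. The main obstacle is the bookkeeping to confirm that every estimate from Section \ref{sec:enehyoka:1}, originally derived under \eqref{eq:katei:D} in a purely $t$--dependent setting, really does transfer to $\omega_j$ under \eqref{eq:katei:x}; the crucial new input here is $|\dif_x\varphi_j|\leq C$, which is precisely the content of \eqref{eq:difnu} through Lemma \ref{lem:Dform:x}, and without it the absorption of the $(\dif_xg_j)\lr{\Lambda\mathcal AV,V}$ term would fail.
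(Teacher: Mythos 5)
Your proposal is correct and follows essentially the same approach as the paper, which itself only sketches the proof of this lemma by pointing to the energy identity \eqref{eq:ene:tosiki} and "the arguments in Section \ref{sec:enehyoka:1}"; you carry out exactly that program, correctly identifying \eqref{eq:katei:x} as the surrogate for \eqref{eq:katei:D}/\eqref{eq:katei:2} and the role of $|\dif_x\varphi_j|\leq C$ in controlling the $(\dif_xg_j)$ term. The only small imprecision is in the final bookkeeping: the $(\dif_xg_j)\lr{\Lambda{\mathcal A}V,V}$ term is bounded by a term proportional to $N\sqrt{a}$ (not an $N$-independent constant), so the left-hand side becomes $\bigl((2-O(\sqrt a))N - C_0\bigr)$ rather than $(2N-C_0)$; after shrinking the domain so that the $O(\sqrt a)$ factor is, say, $\leq 1/2$, one still extracts a coefficient $\geq N$ for $N\geq N_0$ suitably large, so the conclusion stands unchanged.
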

Repeating the same arguments as in \cite[Lemma 3.1]{N1} (also \cite{N3}) one has
\begin{prop}
\label{pro:enehyoka:x}There exists $C>0$ such that for every $n\in\N$ one can find $N_1$ such that
\begin{align*}
\sum_{k+\ell\leq n}\int_{\omega_j}g_j\varphi_j\|\dif_t^k\dif_x^{\ell}U\|^2dxdt-\sum_{\ell\leq n}\int_{\dif\omega_j}G_j(T^{-1}\dif_x^{\ell}U)\\
\leq C\sum_{k+\ell\leq n}\int_{\omega_j}g_j\varphi_j\|\dif_t^k\dif_x^{\ell}LU\|^2dxdt
\end{align*}
for any $N\geq N_1$.
\end{prop}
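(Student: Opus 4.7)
The plan is to reduce the statement for general $n$ to the base case $n=0$ already handled by Lemma \ref{lem:enehyoka:x}, in three stages: (i) strengthen Lemma \ref{lem:enehyoka:x} so that it controls $\int g_j\varphi_j\|V\|^2\,dxdt$ rather than only the $\Lambda$-weighted integral on its left-hand side; (ii) apply this strengthened estimate to $V_{\ell}=T^{-1}\dif_x^{\ell}U$ for each $\ell\leq n$ and absorb the resulting lower-order commutator terms by induction on $\ell$; (iii) convert all $t$-derivatives to $x$-derivatives by iterating the equation $\dif_tU=A\dif_xU+BU+F$.

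For (i) I would use \eqref{eq:katei:x} together with $\lambda_3\simeq 1$ from Proposition \ref{pro:Skon} to observe
\[
\varphi_j^{-1}g_j\lr{\Lambda V,V}\succeq g_j\varphi_j|V_1|^2+g_j|V_2|^2+\varphi_j^{-1}g_j|V_3|^2\succeq g_j\varphi_j\|V\|^2,
\]
since $\varphi_j$ is bounded, $\lambda_1\succeq\varphi_j^2$ and $\lambda_2\succeq\varphi_j$. Hence Lemma \ref{lem:enehyoka:x} implies
\[
cN\int_{\omega_j}g_j\varphi_j\|V\|^2\,dxdt-\int_{\dif\omega_j}G_j(V)\leq C\int_{\omega_j}\varphi_jg_j\|F\|^2\,dxdt.
\]

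For (ii), differentiating $\dif_tU=A\dif_xU+BU+F$ in $x$ yields
\[
\dif_t(\dif_x^{\ell}U)=A\dif_x(\dif_x^{\ell}U)+B\,\dif_x^{\ell}U+\dif_x^{\ell}F+R_{\ell},
\]
where $R_{\ell}=\sum_{m<\ell}\binom{\ell}{m}\big((\dif_x^{\ell-m}A)\dif_x^{m+1}U+(\dif_x^{\ell-m}B)\dif_x^mU\big)$, so $V_{\ell}=T^{-1}\dif_x^{\ell}U$ satisfies a system of the same shape as \eqref{eq:eqV:x} with source $T^{-1}(\dif_x^{\ell}F+R_{\ell})$. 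Using orthogonality of $T$ (hence $\|V_{\ell}\|\simeq\|\dif_x^{\ell}U\|$) and the analyticity-driven bound $\|R_{\ell}\|^2\preceq\sum_{m\leq\ell}\|\dif_x^mU\|^2$, applying the strengthened base estimate to each $V_{\ell}$ and summing over $\ell\leq n$ gives
\[
cN\sum_{\ell\leq n}\int_{\omega_j}g_j\varphi_j\|\dif_x^{\ell}U\|^2\,dxdt-\sum_{\ell\leq n}\int_{\dif\omega_j}G_j(V_{\ell})\leq C\sum_{\ell\leq n}\int_{\omega_j}\varphi_jg_j\|\dif_x^{\ell}F\|^2\,dxdt+C_n\sum_{\ell\leq n}\int_{\omega_j}g_j\varphi_j\|\dif_x^{\ell}U\|^2\,dxdt,
\]
and the last sum is absorbed into the left-hand side upon choosing $N_1$ so that $cN\geq 2C_n$ for $N\geq N_1$.

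Finally, step (iii) is algebraic: iterating $\dif_tU=A\dif_xU+BU+F$ expresses every $\dif_t^k\dif_x^{\ell}U$ as a linear combination of $\dif_x^{k+\ell}U$, lower-order terms $\dif_x^mU$ with $m<k+\ell$ (with coefficients built from derivatives of $A,B$), and $\dif_t^i\dif_x^jF$ with $i+j\leq k+\ell-1$, $i<k$. Combined with the pure-$x$-derivative estimate from (ii), this yields the claimed inequality. The main obstacle is step (ii): one must verify that the constant $C_n$ produced by the commutator bound is truly independent of $N$ (so $cN$ eventually dominates it) and that summing the boundary contributions $G_j(V_{\ell})$ from the base estimate reproduces exactly $\sum_{\ell\leq n}\int_{\dif\omega_j}G_j(T^{-1}\dif_x^{\ell}U)$; both verifications reduce to the uniform orthogonality of $T$ and the size estimates for $T$, $\dif_xT$ gathered in Section \ref{sec:defT}.
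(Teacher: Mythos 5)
Your three-step plan — strengthening Lemma \ref{lem:enehyoka:x} to an $\int g_j\varphi_j\|V\|^2$ lower bound via \eqref{eq:katei:x}, commuting $\dif_x$ through the first-order system so that each $V_\ell=T^{-1}\dif_x^\ell U$ again satisfies \eqref{eq:eqV:x} with the commutator $R_\ell$ treated as a source to be absorbed by taking $N$ large, and finally trading $t$-derivatives for $x$-derivatives through the equation — is the standard iterate-and-commute technique that the paper is invoking when it simply cites \cite[Lemma 3.1]{N1} and \cite{N3}. The verification that $V_\ell$ obeys the same transformed system with source $T^{-1}(\dif_x^\ell F+R_\ell)$ is correct (the cancellation of $T^{-1}A(\dif_xT)$ against $-{\mathcal A}(\dif_xT^{-1})T$ works exactly as in the derivation of \eqref{eq:eqV:x}), the identification of the boundary contributions as $\sum_{\ell\leq n}\int_{\dif\omega_j}G_j(T^{-1}\dif_x^\ell U)$ is exact, and the constant $C_n$ from the commutator bound is indeed $N$-independent since it only involves $\sup$-norms of finitely many derivatives of $A,B$ over a fixed compact set.

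The one point worth flagging concretely is the placement of the quantifier on $C$. Your step (iii), applied at the end, converts $\dif_t^k\dif_x^\ell U$ into $\dif_x^m U$ plus terms of the form $\dif_t^i\dif_x^j(LU)$ with $n$-dependent combinatorial coefficients, and those coefficients multiply the $LU$-data on the right. The $U$-part can be absorbed by enlarging $N_1$, but the $LU$-part cannot be absorbed into the $\|V\|^2$ bulk term, so as written your argument produces a right-hand side constant that grows with $n$, whereas the Proposition asserts a single $C>0$ chosen before $n$. Either one should read $C$ here as a fixed but $n$-dependent constant (a benign notational abuse, consistent with how such estimates are used downstream in collecting over the $\omega_j$), or step (iii) should be absorbed into the choice of $N_1$ by keeping the factor $cN$ from the base estimate intact until after the $t$-to-$x$ conversion, so that the only $n$-independent constant that survives on the right is the one from Lemma \ref{lem:enehyoka:x}. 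Since the paper gives no proof beyond the citation, I cannot say which bookkeeping convention the author intends, but you should be explicit about which you adopt, as the statement's literal quantifier order is not automatic from the sketch you wrote.
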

Following the  same arguments as in \cite{N1}, \cite{N3}
one can collect energy estimates in each $\omega_j$ to obtain 
\begin{prop}
\label{pro:xdep:3} Assume that $p$ has a triple characteristic root ${\bar\tau}$ at $(0,0,1)$ and $(0,0,{\bar\tau},1)$ is effectively hyperbolic. Then there exist $T>0, \delta>0$ such that for any $a_{j,k}(t,x)\in C^{\infty}(\Omega)$, $j+k\leq 2$  one can find $C>0$ and $Q\in\N$ such that
\begin{equation}
\label{eq:Omega:x}
\sum_{k+\ell\leq n}\int_{\Omega}\|\dif_t^k\dif_x^{\ell}U\|^2dxdt 
\leq C\sum_{k+\ell\leq Q}\int_{\Omega}\|\dif_t^k\dif_x^{\ell}LU\|^2dxdt.
\end{equation}
for any $U(t,x)\in C^{\infty}(\Omega)$ with $\dif_t^kU(0,x)=0$, $k=0,\ldots, Q$.
\end{prop}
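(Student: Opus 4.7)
My plan is to deduce the desired $L^2$ estimate on $\Omega$ by applying Proposition \ref{pro:enehyoka:x} separately in each region $\omega_1,\omega_2,\omega_3$ of the partition \eqref{eq:space:like}, then chaining the three inequalities via traces on the interface curves $t=\psi(x)/2$ and $t=\psi(x)$. The three subregions cover $\Omega$ up to a set of measure zero; their slanted sides $|x|=\delta(T-t)$ are space-like in the sense of Lemma \ref{lem:spacelike} provided $\delta>\tau_{\max}$, and after shrinking $T$ so that $\tau_{\max}<1/C$ (with $C$ from \eqref{eq:difnu}) the two horizontal interfaces become space-like as well. The divergence identity \eqref{eq:ene:tosiki} is then legitimate on each $\omega_j$, and Lemma \ref{lem:spacelike} furnishes non-negative boundary contributions on every non-horizontal boundary piece, which we may discard.

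The processing would go $\omega_1\to\omega_2\to\omega_3$. In $\omega_1$, where $g_1=t^{-2N-1}$ is singular at $t=0$, the hypothesis $\dif_t^kU(0,x)=0$ for $k\le Q$ together with Taylor's formula gives $\|\dif_t^a\dif_x^bU\|=O(t^{Q-a+1})$ and hence $\|\dif_t^a\dif_x^bLU\|=O(t^{Q-a})$ near $t=0$; choosing $Q$ large compared to $N$ makes every weighted integral in Proposition \ref{pro:enehyoka:x} converge, kills the boundary integral on $t=0$, and leaves an interior weighted bound together with a controlled trace $\sum_\ell\int_{\{t=\psi(x)/2\}}G_1(T^{-1}\dif_x^\ell U)$. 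In $\omega_2$, where $g_2=(\psi(x)-t)^{2N-1}$ vanishes to order $2N-1$ at the upper interface, the boundary term on $t=\psi(x)$ is zero while the one on $t=\psi(x)/2$ is exactly (up to $(\psi/2)^{2N-1}$) the trace produced in Step 1; Proposition \ref{pro:enehyoka:x} with $j=2$ then yields a weighted bound on $\omega_2$ and, by running the inequality for arbitrarily large $N\ge N_1$, vanishing of $U$ to arbitrarily high order in $(\psi-t)$ on $t=\psi(x)$. In $\omega_3$ with $g_3=(t-\psi)^{-2N-1}$, this high-order vanishing plays the exact role that the initial data hypothesis played in $\omega_1$: it secures integrability and kills the boundary contribution at $t=\psi$, so Proposition \ref{pro:enehyoka:x} with $j=3$ closes the chain. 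Since $g_j\varphi_j$ is bounded below by a positive constant on each $\omega_j$, the three weighted inequalities combine, after absorption of the $N$-powers into the constant, into the unweighted bound \eqref{eq:Omega:x} with $Q=Q(n)$.

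The main obstacle will be the matching at the interface $t=\psi(x)$. This curve is only Lipschitz: $\psi=\max\{0,\mathsf{Re}\,\nu_2\}$ is the positive part of a Puiseux-regular function, with corners where $\nu_2$ crosses the real axis or where $\mathsf{Re}\,\nu_2$ changes sign. Both the divergence identity on $\omega_2,\omega_3$ and the space-likeness invoked in Lemma \ref{lem:spacelike} must be used in their Lipschitz versions, justified by \eqref{eq:difnu}; transferring high-order vanishing of $U$ across this interface requires the $N\to\infty$ argument with $Q$ enlarged at each stage, exactly along the lines of \cite{N1,N3}. A secondary issue is the uniform choice of $N$ across the three regions so that the trace estimate from each step is strong enough to serve as input for the next without loss of powers. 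Once this interface bookkeeping is in place, a Gr\"onwall-type summation of the three regional inequalities delivers \eqref{eq:Omega:x}.
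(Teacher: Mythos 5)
Your overall architecture matches the paper's: apply Proposition \ref{pro:enehyoka:x} on each $\omega_j$ of \eqref{eq:space:like}, drop the side boundary integrals via Lemma \ref{lem:spacelike}, and then ``collect'' the regional estimates in the manner of \cite{N1}, \cite{N3}. But the sketch of the collecting step contains two concrete errors that would derail the proof as written.

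First, the claim that ``running the inequality for arbitrarily large $N\ge N_1$'' yields ``vanishing of $U$ to arbitrarily high order in $(\psi-t)$ on $t=\psi(x)$'' does not follow. The boundary term of $G_2$ at $t=\psi$ vanishes only because the weight $g_2=\varphi_2^{2N-1}$ itself vanishes there; the weighted interior bound $\int_{\omega_2}\varphi_2^{2N}\|U\|^2\,dxdt\leq\cdots$ holds for any $N$ and contains no information about the trace of $U$ at $t=\psi$. A smooth $U$ that is identically $1$ near $t=\psi$ still satisfies such inequalities. Consequently, you cannot ``feed'' high-order vanishing into $\omega_3$ to handle the singularity of $g_3=\varphi_3^{-2N-1}$ at $t=\psi$; what gets passed between subregions in \cite{N1},\cite{N3} is a weighted trace bound, not pointwise vanishing, and the mechanism for converting that into an admissible input for the next region is precisely the content of the collecting lemma that the paper invokes.

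Second, the assertion ``$g_j\varphi_j$ is bounded below by a positive constant on each $\omega_j$'' is false for $j=2$: $g_2\varphi_2=\varphi_2^{2N}=(\psi(x)-t)^{2N}$ tends to zero as $t\to\psi(x)$. So the final step of ``absorbing the $N$-powers into the constant'' does not convert the $\omega_2$ weighted estimate into the unweighted estimate \eqref{eq:Omega:x} on the middle strip; you would obtain no control at all of $U$ near $t=\psi(x)$ from the $\omega_2$ estimate alone. Related to this, the weight mismatch at the interface is not the benign ``up to $(\psi/2)^{2N-1}$'' factor you describe: $g_1(\psi/2)=(\psi/2)^{-2N-1}$ while $g_2(\psi/2)=(\psi/2)^{2N-1}$, a ratio of $(\psi/2)^{-4N-2}$, so matching traces requires a genuine argument. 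Finally, the paper does not need the horizontal interfaces $t=\psi/2$, $t=\psi$ to be space-like; Lemma \ref{lem:spacelike} is invoked only on the lateral boundaries $|x|=\delta(T-t)$, and the horizontal traces are carried across rather than discarded, so your Lipschitz-space-likeness concern, while not wrong, is also not the issue to resolve.
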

%
%
\subsection{Proof of Theorem \ref{thm:xdep:kyosoku}}

To complete the proof of Theorem \ref{thm:xdep:kyosoku}, study the remaining  case that $p$ has a double characteristic root at $(0,0,1)$.
\begin{prop}
\label{pro:xdep:2} Assume that $p$ has a double characteristic root ${\bar\tau}$ at $(0,0,1)$ such that $(0,0,{\bar\tau},1)$ is effectively hyperbolic  if it is a critical point. Then the same assertion as in Proposition \ref{pro:xdep:3} holds.
\end{prop}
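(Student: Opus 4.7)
The plan is to reduce to first- and second-order factors of $P$, handle each with established machinery, and combine. Since $p$ has a double root $\bar\tau$ and a simple root distinct from $\bar\tau$ at $(0,0,1)$, the Weierstrass preparation theorem yields a factorization
\[
p(t,x,\tau,\xi) = p^{(1)}(t,x,\tau,\xi)\,p^{(2)}(t,x,\tau,\xi)
\]
in a conic neighborhood of $(0,0,1)$, with $p^{(1)}$ of degree $2$ in $\tau$ carrying the double root and $p^{(2)}$ of degree $1$ carrying the simple root. Lifting this to operators as in the proof of Proposition \ref{pro:tdep:m} gives $P = P^{(1)}P^{(2)} + R$ with $R$ of order at most $2$. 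Since $p^{(2)}$ remains strictly hyperbolic in a small neighborhood, classical first-order energy estimates handle $P^{(2)}$, so the main task is to estimate $P^{(1)}$ in the same truncated cone.

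After a change of time coordinate that kills the coefficient of $\xi\tau$ (the reduction leading to \eqref{eq:3form:x}), one may take $p^{(1)} = \tau^2 - a(t,x)\xi^2$ with $a(0,0) = 0$ and $a \geq 0$. Either $\dif_t a(0,0) \neq 0$, in which case the Weierstrass preparation theorem gives $a = e_1(t+\alpha(x))$ with $e_1 > 0$ and real analytic $\alpha \geq 0$, or effective hyperbolicity forces $\dif_t^2 a(0,0) \neq 0$ (by the determinant computation leading to \eqref{eq:cond:nijyu}), in which case $a = e_2(t^2 + a_1(x)t + a_2(x))$ and the Puiseux expansion argument of Lemma \ref{lem:Dform:x} applied to the quadratic $t^2 + a_1(x)t + a_2(x)$ yields root branches $\nu_1(x), \nu_2(x)$ with locally bounded derivative of their real parts. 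I would then define scalar weights and subregions exactly as in \eqref{eq:space:like}, with $\psi(x) = \max\{0, {\mathsf{Re}}\,\nu_1(x)\}$.

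For the first-order system reduction of $P^{(1)}$, writing $U = {^t}(\dif_t u, \dif_x u)$, the principal matrix is $A = \begin{bmatrix} 0 & a \\ 1 & 0 \end{bmatrix}$, symmetrized by the already diagonal $\Lambda = {\rm diag}(1, a)$. The diagonalization step of Section \ref{sec:DiaSym} is therefore trivial here, and the required analog of Proposition \ref{pro:kihon:x} reduces to the bounds
\[
\phi_j^2 \leq C a, \qquad \phi_j|\dif_t a| \leq C a, \qquad |\dif_x \phi_j| \leq C
\]
on each $\omega_j$, all of which follow directly from the factored form of $a$ together with \eqref{eq:difnu}. Executing the energy scheme of Section \ref{sec:enehyoka:1} with these weights and using Lemma \ref{lem:spacelike} on the space-like boundary $|x| = \delta(T-t)$ produces weighted $L^2$ estimates for $P^{(1)}$ in each $\omega_j$, which can then be collected over $\Omega$ as in \cite{N1, N3}.

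To finish, given $f = Pu$, I would set $v = P^{(2)} u$, apply the $P^{(1)}$ estimate to $v$ with source $f - Ru$, then apply the first-order estimate for $P^{(2)} u = v$ to recover $u$; the lower-order remainder $R$ is absorbed by a Gronwall argument as in the proof of Proposition \ref{pro:tdep:m}. The hardest part will be the bookkeeping of derivative loss: the factorization, the weight-based energy, and the collection step each cost a fixed number of derivatives, and $Q$ in the estimate \eqref{eq:Omega:x} must be chosen large enough to absorb all of them simultaneously while preserving the space-likeness of the lateral boundary.
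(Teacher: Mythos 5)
Your factorization and weight construction agree with the paper's in substance: the paper writes $p = (\tau - b\xi)(\tau^2 - a\xi^2) = p_1 p_2$ with $b(0,0)\neq 0$, $a(0,0)=0$, applies Weierstrass preparation to $a$, and uses two weight regimes depending on whether $\dif_t a(0,0)\neq 0$ or $\dif_t^2 a(0,0)\neq 0$ (you'd want two weights, not three, since the quadratic factor has only one root pair -- and the relevant branch is the paper's $\nu_2$, not $\nu_1$, but that's cosmetic). However, there is a genuine gap in your recombination step, and it is exactly where the paper does something you have not reproduced.

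You estimate only the nested composition in one order: you apply the second-order energy to $v = P^{(2)} u$, controlling $D_t v$ and $D_x v$, and then solve $P^{(2)} u = v$ by the first-order transport estimate. This yields control of $D_t P^{(2)} u$ and $D_x P^{(2)} u$ -- two linear combinations of $D_t^2 u, D_t D_x u, D_x^2 u$ -- plus control of $u$ itself. That is only a two-dimensional slice of the three-dimensional space of second derivatives. In contrast, the paper inserts $u \mapsto P_1 u$ in the $P_2$-energy \emph{and} $u \mapsto P_2 u$ in the $P_1$-energy and \emph{adds} the two inequalities (see \eqref{eq:P1P2}), thereby controlling $D_t P_1 u$, $D_x P_1 u$, and $P_2 u$ simultaneously. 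Because $b(0,0)\neq 0$ and $a(0,0)=0$, the $3\times 3$ matrix sending $(D_t^2, D_x D_t, D_x^2)$ to $(D_t P_1, D_x P_1, P_2)$ has determinant $b^2 - a \neq 0$ near the origin, so these three combinations span all second derivatives modulo first order. This full control, with the large parameter $N$ in front, is precisely what lets the paper absorb the second-order remainder in $P = P_2 P_1 + \sum_{i+j\le 2} b_{i,j} D_x^i D_t^j = P_1 P_2 + \sum_{i+j\le 2}\tilde b_{i,j} D_x^i D_t^j$. In your one-directional chain, the source $P^{(1)} v = f - Ru$ contains all second derivatives of $u$ through $R$, but your left-hand side controls only two of the three; you therefore cannot close the Gronwall loop without a circular dependence. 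To repair the argument you must also produce an estimate controlling $P^{(1)} u$ (equivalently $P_2 u$), which in the paper comes from the symmetric insertion $u\mapsto P_2 u$ in the first-order $P_1$-energy \eqref{eq:ene:P:1}.

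One further point worth noting: your plan to redo the second-order factor by a $2\times 2$ system reduction with diagonal symmetrizer ${\rm diag}(1,a)$ is harmless but unnecessary; the paper simply invokes the known scalar weighted energy for $D_t^2 - a D_x^2$ from \cite{N1}, \cite{N3} (this is \eqref{eq:ene:P:2}), which already does the job.
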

We give a sketch of the proof. Assume that $p$ has a double characteristic root ${\bar\tau}$ at $(0,0,1)$ and hence, after  a suitable change of local coordinates, one can write
\begin{equation}
\label{eq:xdep:2:form}
p(t,x,\tau,\xi)=\big(\tau-b(t,x)\xi\big)\big(\tau^2-a(t,x)\xi^2\big)=p_1p_2
\end{equation}
where $p_1=\tau-b(t,x)\xi$, $p_2=\tau^2-a(t,x)\xi^2$ and $a(0,0)=0$, $b(0,0)\neq0$. 

If $(0,0,0,1)$ is a critical point of $p$ and hence $\dif_ta(0,0)=0$ then $H_p=c\,H_{p_2}$ at $(0,0,0,1)$  with some $c\neq 0$ and 
${\rm det}\big(\lambda I-F_{p_2}(0,0,0,1)\big)=\lambda^2\big(\lambda^2-2\,\dif_t^2a(0,0)\big)$ which shows $
\dif_t^2a(0,0)\neq 0$. From the Weierstrass' preparation theorem one can write
\[
a(t,x)=e_2(t,x)\big(t^2+2{\tilde a}_1(x)t+{\tilde a}_2(x)\big)=e_2(t,x)\Delta_2(t,x)
\]
where $e_2>0$, ${\tilde a}_j(0)=0$ and $\Delta_2$ takes the form, in each $0<\pm x<\delta$, either  \eqref{eq:Dform:x1} or \eqref{eq:Dform:x2}  where  $\nu_1$ is absent. If $\Delta_2$ has the form \eqref{eq:Dform:x2} or \eqref{eq:Dform:x1} with ${\mathsf{Re}}\,\nu_2(x)\leq 0$ it suffices to take $\varphi_1=t$, $\omega_1=\Omega$ ($\varphi_2$ is not needed). If $\Delta_2$ takes the form \eqref{eq:Dform:x1} with ${\mathsf{Re}}\,\nu_2(x)=\psi(x)> 0$ we choose $\varphi_1=\psi(x)-t$, $\omega_1=\{(t,x)\mid |x|\leq \delta(T-t), 0\leq t\leq \psi(x)\}$ and $\varphi_2=t-\psi(x)$, $\omega_2=\{(t,x)\mid |x|\leq \delta(T-t), \psi(x)\leq t\leq T\}$. 

If $\dif_ta(0,0)\neq0$ one can write
\[
a(t,x)=e_1(t,x)\big(t+\alpha(x)\big),\quad \alpha(0)=0
\]
where $e_1>0$ and $\alpha(x)\geq 0$ in a neighborhood of $x=0$. In this case we take $\varphi_1=t$, $\omega_1=\Omega$ ($\varphi_2$ is not needed). 

Denote
\[
P_2=D_t^2-a(t,x)D_x^2,\quad P_1=D_t-b(t,x)D_x.
\]
Then repeating the same arguments as in \cite{N1}, \cite{N3} one has
\begin{equation}
\label{eq:ene:P:2}
\int_{\omega_j}\varphi_jg_j|P_2u|^2dxdt\geq -\int_{\dif \omega_j}G^{(2)}_j(u)
+N\int_{\omega_j}\varphi_jg_j\big(|D_tu|^2+|D_xu|^2\big)dxdt
\end{equation}
for $N\geq N_0$ with $G^{(2)}_j(u)=g_j\big(|\dif_tu|^2+a|\dif_x^2u|^2\big)dx+ag_j\big(\dif_xu\cdot \overline{\dif_tu}+\overline{\dif_xu}\cdot\dif_tu\big)dt$. On the other hand, since $P_1$ is a first order differential operator with a real valued $b(t,x)$ it is easy to see that
\begin{equation}
\label{eq:ene:P:1}
\begin{split}
\int_{\omega_j}\varphi_jg_j|P_1u|^2dxdt\geq -\int_{\dif \omega_j}G^{(1)}_j(u)
+N\int_{\omega_j}\varphi_j^{-1}g_j|u|^2dxdt\\
\geq -\int_{\dif \omega_j}G^{(1)}_j(u)
+N\int_{\omega_j}\varphi_jg_j|u|^2dxdt
\end{split}
\end{equation}
for $N\geq N_0$ with $G^{(1)}_j(u)=g_j|u|^2dx+bg_j|u|^2dt$. Inserting $u=P_1u$ in \eqref{eq:ene:P:2} and $u=P_2u$ in \eqref{eq:ene:P:1} respectively and adding them one obtains
\begin{equation}
\label{eq:P1P2}
\begin{split}
\int_{\omega_j}\varphi_jg_j\big(|P_2P_1u|^2+|P_1P_2u|^2\big)dxdt\geq -\int_{\dif \omega_j}{\tilde G}_j(u)\\
+N\int_{\omega_j}\varphi_jg_j\big(|D_tP_1u|^2+|D_xP_1u|^2+|P_2u|^2\big)dxdt.
\end{split}
\end{equation}
In view of $b(0,0)\neq 0$ and $a(0,0)=0$ it is easy to see that $D_t^2$, $D_xD_t$ and $D_x^2$ are linear combinations of $D_tP_1$, $D_xP_1$ and $P_2$ with smooth coefficients modulo first order operators. Since one can write
\[
P=P_2P_1+\sum_{i+j\leq 2}b_{i,j}(t,x)D_x^iD_t^j=P_1P_2+\sum_{i+j\leq 2}{\tilde b}_{i,j}(t,x)D_x^iD_t^j
\]
one concludes from \eqref{eq:P1P2} that
\[
\int_{\omega_j}\varphi_jg_j|Pu|^2dxdt\geq -\int_{\dif \omega_j}{\tilde G}_j(u)
+N\sum_{i+j\leq 2}\int_{\omega_j}\varphi_jg_j|D_t^iD_x^ju|^2dxdt.
\]
The rest of the proof is parallel to that of Proposition \ref{pro:xdep:3}.

\medskip
\noindent
Proof of Theorem \ref{thm:xdep:kyosoku}: Note that \eqref{eq:Omega:x} implies
\[
\sum_{k+\ell\leq n+2}\int_{\Omega}\|\dif_t^k\dif_x^{\ell}u\|^2dxdt 
\leq C\sum_{k+\ell\leq Q}\int_{\Omega}\|\dif_t^k\dif_x^{\ell}Pu\|^2dxdt.
\]
Then applying an approximation argument with the Cauchy-Kowalevsky theorem one can conclude the proof of Theorem \ref{thm:xdep:kyosoku}. 

\medskip
We restrict ourselves to third order operators in Theorem \ref{thm:xdep:kyosoku} because it seems to be hard to apply the same arguments as in Section \ref{sec:proof:tdep:m} to this case.

\section{Example of third order homogeneous equation with general triple characteristics}

To show that the  same arguments in the previous sections can be applicable to hyperbolic operators with more general triple characteristics, we study the Cauchy problem 
\begin{equation}
\label{eq:xdepCP:unif}
\left\{\begin{array}{ll}
D_t^3u-a(t,x)D_tD_x^2u-b(t,x)D_x^3u=0,\;\;U\cap\{t> s\}\\[8pt]
D_t^ju(s,x)=u_j(x),\quad j=0,1,2
\end{array}\right.
\end{equation}
in a {\it full neighborhood} of $(0,0)$ in $\R^2$.
The Cauchy problem \eqref{eq:xdepCP:unif} is (uniformly) well posed near the origin if one can find a neighborhood $U$ of $(0,0)$ and a small $\ep>0$ such that for any $|s|<\ep$ and  any $u_j(x)\in C^{\infty}(\Omega\cap\{t=s\})$ there is a unique solution to \eqref{eq:xdepCP:unif}. Assume that there is a neighborhood $U'$ of $(0,0)$ such that $p(t,x,\tau,1)=0$ has only real roots in $\tau$ for  $(t,x)\in U'$, that is
\begin{equation}
\label{eq:ipan:triple}
\Delta(t,x)=4\,a(t,x)^3-27\,b(t,x)^2\geq 0, \quad (t,x)\in U'
\end{equation}
 which is necessary for the Cauchy problem \eqref{eq:xdepCP:unif} to be well posed near the origin (\cite{Lax, Mbook}).  
\begin{theorem}
\label{thm:txdep:CP}Assume \eqref{eq:ipan:triple} and that there exists $C>0$ such that 
\begin{equation}
\label{eq:exam:cond}
a^3\leq C\Delta, \qquad |\dif_tb|\leq C\sqrt{a}\,|\dif_ta|
\end{equation}
holds in a neighborhood of $(0,0)$. Then  the Cauchy problem \eqref{eq:xdepCP:unif} is $C^{\infty}$ well posed near the origin. 
\end{theorem}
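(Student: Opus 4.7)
The plan is to extend the diagonal symmetrizer machinery of Sections~\ref{sec:DiaSym}--\ref{sec:app:xdep} to the present setting, where the triple characteristic at the origin is no longer required to be effectively hyperbolic. The strength of the hypothesis $a^3 \leq C\Delta$ will replace the role played by effective hyperbolicity (i.e.\ $\dif_t a(0,\bar X)>0$) in Sections~\ref{sec:app:tdep}--\ref{sec:app:xdep}.

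First, reduce $Pu=0$ to a first-order system exactly as in Sections~\ref{sec:DiaSym} and~\ref{sec:enehyoka:1}: with $U={}^t(D_t^2 u, D_t D_x u, D_x^2 u)$ and $V = T^{-1}U$, one obtains $\dif_t V = \mathcal{A}\,\dif_x V + \mathcal{B}\,V$ with $\Lambda\mathcal{A}$ symmetric and $\Lambda=\mathrm{diag}(\lambda_1,\lambda_2,\lambda_3)$. The hypothesis $a^3 \leq C\Delta$, combined with the lower bound $\lambda_1 \geq \Delta/(6a+2a^2+2a^3)$ of Proposition~\ref{pro:Skon}, upgrades $\lambda_1 \preceq a^2$ to $\lambda_1 \simeq a^2$, so that $\Lambda \simeq \mathrm{diag}(a^2,a,1)$ throughout the neighborhood.

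Next, verify the conditions \eqref{eq:katei:D} of Section~\ref{sec:enehyoka:1} for the scalar weight $\phi\simeq a$. The first condition $\phi^2 a \preceq \Delta$ is immediate from $\phi^2 a \simeq a^3 \simeq \Delta$; the third, $\phi|\dif_t a|\preceq a$, is automatic from smoothness of $a$. For the second, $\phi|\dif_t\Delta|\preceq \Delta$, write $\dif_t\Delta = 12a^2\dif_t a - 54b\,\dif_t b$; combining the hyperbolicity bound $|b|\preceq a^{3/2}$ with the hypothesis $|\dif_t b|\leq C\sqrt{a}|\dif_t a|$ yields $|b\,\dif_t b|\preceq a^2|\dif_t a|$, whence $|\dif_t\Delta|\preceq a^2$ and $\phi|\dif_t\Delta|\preceq a^3\simeq \Delta$.

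The principal technical obstacle is that $\phi\simeq a$ does not satisfy $\dif_t\phi=1$ as required by the Section~\ref{sec:enehyoka:1} energy identity. Following the spirit of Sections~\ref{sec:ene:tdep} and~\ref{sec:app:xdep}, the remedy is to partition a neighborhood of $(0,0)$ into subregions on which one can build local scalar weights $\varphi_j$ compatible both with the three conditions above and with having $\dif_t\varphi_j$ of constant sign (the partition is dictated by the local structure of the zero set $\{a=0\}$ and by the sign of $\dif_t a$), apply the weighted energy estimate of Section~\ref{sec:enehyoka:1} within each subregion, and then collect the resulting estimates via finite propagation speed using a space-like boundary argument in the spirit of Lemma~\ref{lem:spacelike}. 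Once such estimates hold uniformly for initial time $s\in(-\varepsilon,\varepsilon)$, $C^\infty$ well-posedness follows by the standard Cauchy--Kowalevski approximation argument used in the proof of Theorem~\ref{thm:xdep:kyosoku}. The main work is in constructing the subregion decomposition and the associated scalar weights that enable forward energy propagation across the degenerate set $\{a=0\}$ in the absence of effective hyperbolicity; this is where both hypotheses must be used in tandem.
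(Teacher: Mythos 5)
Your proposal captures the opening moves correctly (reduction to a first-order system, the upgrade to $\lambda_1\simeq a^2$, $\lambda_2\simeq a$, $\lambda_3\simeq 1$ from $a^3\preceq\Delta$, and the derivation $|\dif_t\Delta|\preceq a^2|\dif_ta|$ from the two hypotheses), and you correctly identify the obstacle that $\phi\simeq a$ fails $\dif_t\phi=1$. But your remedy --- passing to subregion weights $\varphi_j$ \emph{satisfying the Section~\ref{sec:enehyoka:1} conditions \eqref{eq:katei:D}} with $\dif_t\varphi_j$ of constant sign --- is not what the paper does and in fact cannot work in general. If, say, $a\simeq t^2+x^2$ (which is compatible with both hypotheses, e.g.\ $b\equiv 0$), the third inequality in \eqref{eq:katei:D} forces $\varphi\preceq a$ near the origin, and no function with $|\dif_t\varphi|$ bounded below can satisfy $\varphi\preceq t^2+x^2$ in a region containing points $(t,0)$ with $t\to 0^+$. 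The paper instead keeps $\phi$ nonlinear in $t$ in the crucial subregion, replaces \eqref{eq:katei:D} by the genuinely different conditions \eqref{eq:katei:Dbis}, namely $\phi|\dif_ta|\preceq a\,\dif_t\phi$ and $\phi\preceq\dif_t\phi$, which are adapted to a nonconstant $\dif_t\phi>0$, and inserts a factor $e^{-\gamma t}$ into the energy so that the unconditional positive term $\gamma\lr{\Lambda V,V}$ can absorb the error contributions that are merely $O(\lr{\Lambda V,V})$ rather than $O(\phi^{-1}\dif_t\phi\lr{\Lambda V,V})$; the accompanying smallness condition \eqref{eq:Ome:to:0} is then needed for the $\dif_x(\Lambda{\mathcal A})$ term. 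You also underuse the hypothesis $|\dif_tb|\preceq\sqrt{a}\,|\dif_ta|$: in the paper it is essential not only for bounding $\phi|\dif_t\Delta|/\Delta$ but for the improved estimates \eqref{eq:hyoka:el:tbis} of $(\dif_tT^{-1})T$, i.e.\ the $\mathcal{B}$-terms, which are what ultimately make the modified energy close.

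The concrete piece missing from the proposal is the construction that the paper leans on from \cite{N1}, \cite{N3}: write $a=\alpha^2$ with $\alpha$ built from the Puiseux expansion of the zeros of $a$ in $t$ over $0<\pm x<\delta$, order the real parts $\sigma_j(x)$ of the distinct branches, and partition a space-like neighborhood into the bands $\omega_j^\pm$ (with linear weights $\pm(t-\sigma_j(x))$) together with a top band $\omega(T)$ where the weight is $t-s_m(x)$ when $x$ divides $\alpha$, and is $\alpha$ itself when it does not. It is on $\omega(T)$ with $\phi=\alpha$ that \eqref{eq:katei:Dbis} (rather than \eqref{eq:katei:D}) is checked, and this is exactly the piece your scheme, built around \eqref{eq:katei:D} and constant-sign $\dif_t\varphi_j$, cannot supply. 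So the high-level strategy is aligned, but there is a real gap between ``verify \eqref{eq:katei:D}, partition, collect'' and the argument the theorem actually requires.
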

Note that if both $a(t,x)$ and $b(t,x)$ are independent of $t$, Theorem \ref{thm:txdep:CP} is a very special case of 
\cite[Theorem 1.1]{SpaTa} and if $a(t,x)$ and $b(t,x)$ are independent of $x$ this is also a special case of \cite[Theorem 2]{CoOr}.

\medskip
We give a rough sketch of the proof.  If $\Delta(0,0)>0$ then $p$ is strictly hyperbolic near the origin and the assertion is clear so $\Delta(0,0)=0$ is assumed from now on and hence  $a(0,0)=0$ by assumption \eqref{eq:exam:cond}. Then $p$ has a triple characteristic root at $(0,0,1)$.  Thanks to Proposition \ref{pro:Skon} the eigenvalues $\lambda_j$ of the B\'ezoutian matrix satisfy
\[
\lambda_1\simeq a^2,\quad \lambda_2\simeq a,\quad \lambda_3\simeq 1
\]
for $\Delta/a\geq a^2/C$. Let $\phi$ be a scalar function satisfying $\phi>0$ and $\dif_t\phi>0$ in $\omega$ and consider the following energy;
\begin{equation}
\label{eq:ene:form:2}
(\phi^{-N}e^{-\gamma t} \Lambda V,V)=\int_{\omega} \phi^{-N}e^{-\gamma t}\lr{\Lambda V,V}dx
\end{equation}
where $N>0, \gamma>0$ are positive parameters.  Note that 
\begin{align*}
\frac{d}{dt}\,(e^{-\gamma t}\phi^{-N}\, \Lambda V,V)=-\big((N\phi^{-1}\dif_t\phi+\gamma)e^{-\gamma t}\phi^{-N}\,\Lambda V,V\big)
\\+\big(e^{-\gamma t}\phi^{-N}(\dif_t\Lambda)V,V\big)
+2\,{\mathsf{Re}}\,\big(e^{-\gamma t}\phi^{-N}\Lambda({\mathcal A}\dif_xV+{\mathcal B}V),V\big)
\end{align*}
and
\begin{equation}
\label{eq:ene:bis}
(N\phi^{-1}\dif_t\phi+\gamma)\lr{\Lambda V,V}=N\sum_{j=1}^3\phi^{-1}\dif_t\phi\lambda_j |V_j|^2+\gamma\sum_{j=1}^3\lambda_j |V_j|^2.
\end{equation}
Since the hyperbolicity condition $4a^3\geq 27b^2$ is assumed  in a full neighborhood of $(0,0)$ then Lemma \ref{lem:NiP} now states
\begin{equation}
\label{eq:dif:ab:imp}
|\partial_ta|\preceq \sqrt{a},\quad|\partial_xa|\preceq \sqrt{a},\quad |\partial_xb|\preceq a,\quad |\partial_tb|\preceq a.
\end{equation}
Suppose that a scalar weight $\phi$ satisfying the following property is obtained;
\begin{equation}
\label{eq:katei:Dbis}
\phi\,|\dif_ta|\preceq a\,\dif_t\phi,\qquad \phi\preceq \dif_t\phi\quad \text{in}\quad \omega.
\end{equation}
Since $|\dif_tq(\lambda_j)|\preceq (|\dif_ta|+|b||\dif_tb|)\lambda_j+|\dif_t\Delta|$ and $|\dif_t\Delta|\preceq a^2\,|\dif_ta|$ by \eqref{eq:exam:cond} then
\begin{equation}
\label{eq:dif:t:q}
|\dif_t\lambda_j|\preceq \frac{(|\dif_ta|+a^{5/2})\lambda_j+a^2\,|\dif_ta|}{|q_{\lambda}(\lambda_j)|}
\end{equation}
and hence $
|\dif_t\lambda_j|\preceq (|\dif_ta|/a)\lambda_j+a^{3/2}\,\lambda_j+a\,|\dif_ta|$ for $j=1,2$ 
which shows that
\[
\phi\,|\dif_t\lambda_j|\preceq \dif_t\phi\,\big(\lambda_j+a^2\big).
\]
Thus $\phi\,|\dif_t\lambda_j|\preceq \dif_t\phi\,\lambda_j$ for $j=1,2,3$ since $\lambda_2\simeq a$. The case $j=3$ is clear from \eqref{eq:katei:Dbis}. This proves that $\big|\phi^{-N}\lr{(\dif_t\Lambda)V,V}\big|$ is bounded by \eqref{eq:ene:bis} taking $N$ large. Next assume that a scalar weight $\phi$ satisfies
\begin{equation}
\label{eq:Ome:to:0}
\sup_{(t,x)\in\omega_{\ep}}\sqrt{a}\,|\dif_x\phi|(\dif_t\phi)^{-1}=0\quad (\ep\to 0)
\end{equation}
where $\omega_{\ep}$ is a family of regions converging to $(0,0)$ as $\ep\to 0$. Recall \eqref{eq:DAdifV};
\[
2\,{\mathsf{Re}}\,(\phi^{-N}\Lambda{\mathcal A}\dif_x V,V)=N\big(\phi^{-N-1}(\dif_x\phi)\Lambda{\mathcal A}V,V\big)-\big(\phi^{-N}\dif_x(\Lambda{\mathcal A})V,V).
\]
 Thanks to \eqref{eq:adphi} the term $N\big|\phi^{-N-1}(\dif_x\phi)\lr{\Lambda{\mathcal A}V,V}\big|$ is bounded by \eqref{eq:ene:bis} in $\omega_{\ep}$ for enough small $\ep$ in virtue of  the assumption \eqref{eq:Ome:to:0}. Consider the term $\lr{\dif_x(\Lambda{\mathcal A})V,V}=\lr{(\dif_x\Lambda){\mathcal A}V,V}+\lr{(\dif_x{\mathcal A})V,\Lambda v}$. From \eqref{eq:dla1:la1} one has $
|\dif_x\lambda_1|\preceq a^{3/2}$. Since $|\dif_x\lambda_2|=O(\sqrt{a})$ and $|\dif_x\lambda_3|=O(1)$ it follows from Lemmas \ref{lem:difTAT}
\[
(\dif_x\Lambda){\mathcal A}=\begin{bmatrix}
O(a^2)&O(a^{3/2})&O(a^2)\\
O(a^{3/2})&O(a)&O(\sqrt{a})\\
O(a^{3/2})&O( a)&O( a^{5/2})
\end{bmatrix}
\]
and then $|\lr{(\dif_x\Lambda){\mathcal A}V,V}|$ is bounded by
\begin{align*}
a^2|V_1|^2+a|V_2|^2+a^{5/2}|V_3|^2+a^{3/2}|V_1||V_2|+a^{3/2}|V_1||V_3|+\sqrt{a}|V_2||V_3|\\
\preceq a^2|V_1|^2+a|V_2|^2+|V_3|^3.
\end{align*}
Consider $\big|\lr{(\dif_x{\mathcal A})V,\Lambda V}\big|$. Thanks to Lemma \ref{lem:difTAT}  one has 
\[
\Lambda(\dif_x{\mathcal A})=\begin{bmatrix}
O(a^2)&O(a^{3/2})&O(a^2)\\
O(a^{3/2})&O(a)&O(\sqrt{a})\\
O( a)&O( a)&O(\sqrt{a})
\end{bmatrix}.
\]
Thus $|\lr{\Lambda(\dif_x{\mathcal A})V,V}|$ is bounded by
\begin{align*}
a^2|V_1|^2+a|V_2|^2+\sqrt{a}|V_3|^2+a^{3/2}|V_1||V_2|+a|V_1||V_3|+\sqrt{a}|V_2||V_3|\\
\preceq a^2|V_1|^2+a|V_2|^2+|V_3|^3.
\end{align*}
Then $|(e^{-\gamma t}\phi^{-N}\Lambda{\mathcal A}\dif_x V,V)|$ is bounded by \eqref{eq:ene:bis} taking $\ep>0$ small and $\gamma$ large.

Turn to ${\mathsf{Re}}\,\big(e^{-\gamma t}\phi^{-N}\Lambda{\mathcal B}V,V\big)$ where ${\mathcal B}=(\dif_tT^{-1})T-{\mathcal A}(\dif_xT^{-1})T$. Using \eqref{eq:dif:t:q} it is easy to see
\begin{equation}
\label{eq:hyoka:el:tbis}
\begin{split}
|\dif_t\ell_{11}|\preceq a|\dif_ta|+a^2,\;\; |\dif_t\ell_{21}|\preceq |\dif_tb|+a^{5/2},\;\; |\dif_t\ell_{31}|\preceq |\dif_ta|+a,\\
|\dif_t\ell_{12}|\preceq a^{2},\;\; |\dif_t\ell_{22}|\preceq |\dif_ta|+a^{3/2},\;\; |\dif_t\ell_{32}|\preceq |\dif_tb|+a^2,\\
|\dif_t\ell_{13}|\preceq |\dif_ta|+a^{5/2},\;\; |\dif_t\ell_{23}|\preceq a^{2},\;\; |\dif_t\ell_{33}|\preceq |\dif_ta|.
\end{split}
\end{equation}
Noting $|\dif_tb|\preceq \sqrt{a}\,|\dif_ta|$ these estimates improve \eqref{eq:dT:-1:T} to
\begin{lem} Let $(\dif_tT^{-1})T=({\tilde t}_{ij})$ then
\[
({\tilde t}_{ij})=\begin{bmatrix}0&O(a+|\dif_ta|/\sqrt{a})&O(|\dif_ta|+a^{5/2}))\\
O(a+|\dif_ta|/\sqrt{a})&0&O(\sqrt{a}|\dif_ta|+a^2)\\
O(|\dif_ta|+a^{5/2})&O(\sqrt{a}|\dif_ta|+a^2)&0\\
\end{bmatrix}.
\]
\end{lem}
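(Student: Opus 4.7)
The plan is to combine the formula \eqref{eq:d:titj} with the sharpened derivative estimates \eqref{eq:hyoka:el:tbis}. Since $(\dif_t T^{-1})T = {}^t(\dif_t T)\,T$ is antisymmetric (differentiate ${}^t T\,T = I$), the diagonal entries vanish automatically and it suffices to bound $\tilde t_{12}$, $\tilde t_{13}$, $\tilde t_{23}$; the symmetric entries then follow by a sign change.

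Substituting into $\tilde t_{ij} = d_i^{-1}\sum_{k}(\dif_t\ell_{ki})\bar t_{kj}$ requires three ingredients: the bounds \eqref{eq:hyoka:el:tbis} on $|\dif_t\ell_{ki}|$; the column sizes $t_{kj}$ from \eqref{eq:ordT}; and the denominators $d_1, d_2 \simeq a$, $d_3 \simeq 1$ from \eqref{eq:dno:1} and \eqref{eq:dno:23}. The decisive step is the use of $|\dif_t b|\preceq \sqrt{a}\,|\dif_t a|$ (from the hypothesis \eqref{eq:exam:cond}) in place of the cruder $|\dif_t b|\preceq \sqrt{a}$ of Lemma \ref{lem:NiP}; this embeds an extra factor $|\dif_t a|$ into every contribution coming from $\dif_t b$, and likewise sharpens the $|\dif_t\lambda_j|$ estimates through $|\dif_t\Delta|\preceq a^2|\dif_t a|$.

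The main labour is bookkeeping. For instance, for $\tilde t_{12}$ the three products $(\dif_t\ell_{k1})t_{k2}$ have orders $a^{5/2}|\dif_t a|+a^{7/2}$, $\sqrt{a}\,|\dif_t a|+a^{5/2}$, and $\sqrt{a}\,|\dif_t a|+a^{3/2}$; summing and dividing by $d_1\simeq a$ lands inside the claimed $O(a+|\dif_t a|/\sqrt{a})$. The entries $\tilde t_{13}$ and $\tilde t_{23}$ are estimated in the same manner, using $d_3\simeq 1$ rather than $d_i\simeq a$ and the column orders $t_{k3}$ from \eqref{eq:ordT}.

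Should the direct substitution be slightly too lossy on a given entry, the fallback is the equivalent formula $\tilde t_{ij}=\langle(\dif_t S)\mathbf{t}_i,\mathbf{t}_j\rangle/(\lambda_i-\lambda_j)$ for $i\neq j$, obtained by differentiating $S\mathbf{t}_i=\lambda_i\mathbf{t}_i$ and using symmetry of $S$. Because $\dif_t S$ has only five nonzero entries (built from $\dif_t a$, $\dif_t b$ and $a\,\dif_t a$), this formula extracts the $|\dif_t a|$-dependence directly without invoking $\dif_t\lambda_j$; combined with $|\lambda_1-\lambda_2|\simeq a$ and $|\lambda_i-\lambda_3|\simeq 1$ for $i=1,2$, it delivers the stated bounds in their cleanest form. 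I do not expect any genuine conceptual obstacle — only the careful tracking of powers of $a$ — because every improvement ultimately comes from the single new input $|\dif_t b|\preceq \sqrt{a}\,|\dif_t a|$ supplied by the assumption \eqref{eq:exam:cond}.
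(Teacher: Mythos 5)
Your main route — plugging the refined derivative bounds \eqref{eq:hyoka:el:tbis} (using $|\dif_tb|\preceq\sqrt{a}\,|\dif_ta|$) into \eqref{eq:d:titj}, with column sizes from \eqref{eq:ordT} and $d_1,d_2\simeq a$, $d_3\simeq 1$ — is precisely what the paper does; it states \eqref{eq:hyoka:el:tbis} and declares that these estimates ``improve \eqref{eq:dT:-1:T} to'' the Lemma, with no further computation shown. But your worked example for $\tilde t_{12}$ does not actually close as you assert. Dividing the sum $O(\sqrt{a}\,|\dif_ta|+a^{3/2})$ by $d_1\simeq a$ gives $O(|\dif_ta|/\sqrt{a}+\sqrt{a})$, and $\sqrt{a}$ is \emph{not} $O(a)$; the offending term traces to the $+a$ in $|\dif_t\ell_{31}|\preceq|\dif_ta|+a$ of \eqref{eq:hyoka:el:tbis}. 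To rescue the direct route one must sharpen that intermediate bound: in the present setting $a^3\preceq\Delta$ forces $\lambda_1\simeq a^2$ (combining \eqref{eq:exam:cond} with \eqref{eq:kon:1}), and then \eqref{eq:dif:t:q} gives $|\dif_t\lambda_1|\preceq a|\dif_ta|+a^{7/2}$, hence $|\dif_t\ell_{31}|\preceq|\dif_ta|+a^{7/2}$; after division the stray power becomes $O(a^3)$ and the claimed entry is recovered. (Read literally, the paper's own table \eqref{eq:hyoka:el:tbis} is also too loose to imply the Lemma as stated, so this is not your invention — but you should not claim the direct substitution ``lands inside'' without this sharpening.)

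Your fallback formula $\tilde t_{ij}=\langle(\dif_tS)\mathbf{t}_i,\mathbf{t}_j\rangle/(\lambda_i-\lambda_j)$ is a genuine departure from the paper, and it is the cleaner argument. Since the nonzero entries of $\dif_tS$ are $\pm\dif_ta$, $2\dif_ta$, $3\dif_tb$, $2a\,\dif_ta$, every contribution to $\langle(\dif_tS)\mathbf{t}_i,\mathbf{t}_j\rangle$ carries a factor $|\dif_ta|$ once $|\dif_tb|\preceq\sqrt{a}\,|\dif_ta|$ is invoked, so the spurious pure powers of $a$ simply never enter, and no estimate on $\dif_t\lambda_j$ or $\dif_t\ell_{kj}$ is needed. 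With the column sizes from \eqref{eq:ordT} and the spectral gaps $\lambda_1-\lambda_2\simeq -a$, $\lambda_j-\lambda_3\simeq -1$, one gets $\tilde t_{12}=O(|\dif_ta|/\sqrt{a})$, $\tilde t_{13}=O(|\dif_ta|)$, $\tilde t_{23}=O(\sqrt{a}\,|\dif_ta|)$, all strictly inside the claimed bounds. So the gap in your primary computation is real but harmless: the alternative route you already provide closes it cleanly, and in fact yields sharper estimates than the paper records.
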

In order to estimate $\big|\phi^{-N}\lr{(\dif_tT^{-1})TV,\Lambda V}\big|$ recalling $\Lambda\simeq {\rm diag}(a^2,a,1)$ it suffices to estimate
\[
|\phi^{-N}|\Big(a\,{\tilde t}_{21}|V_1|| V_2|+{\tilde t}_{31}|V_1||V_3|+{\tilde t}_{32}|V_2||V_3|\Big).
\]
Note that $a\,{\tilde t}_{21}\preceq a^2+\sqrt{a}\,|\dif_ta|$ and
\begin{gather*}
a^2|V_1||V_2|\preceq \sqrt{a}\,(a^2|V_1|^2+a|V_2|^2),\\
\sqrt{a}\,|\dif_ta||V_1||V_2|\preceq 
\phi^{-1}\dif_t\phi\,a^{3/2} |V_1||V_2|
\preceq \phi^{-1}\dif_t\phi\big(a^2 |V_1|^2+a |V_2|^2\big).
\end{gather*}
As for ${\tilde t}_{31}|V_1||V_3|\preceq (|\dif_ta|+a^{5/2})|V_1||V_3|$  one has
\begin{gather*}
a^{5/2}\,|V_1||V_3|\preceq a^{3/2}\big(a^2|V_1|^2+|V_3|^2\big),\\
|\dif_ta|\,|V_1||V_3|\preceq \phi^{-1}\dif_t\phi\,a|V_1||V_3|
\preceq \phi^{-1}\dif_t\phi\big(a^2|V_1|^2+|V_3|^2\big).
\end{gather*}
Noting ${\tilde t}_{32}|V_2||V_3|\preceq (\sqrt{a}|\dif_ta|+a^2)|V_2||V_3|\preceq a\,|V_2||V_3|\preceq \sqrt{a}(a|V_2|^2+|V_3|^2)$
one concludes that $\big|\phi^{-N}\lr{(\dif_tT^{-1})TV,\Lambda V}\big|$ is bounded by \eqref{eq:ene:bis} taking $N$ large and $\gamma\geq 1$.  Consider $|\phi^{-N}\lr{(\dif_xT^{-1})TV,\Lambda{\mathcal A}V}$. From Lemma \ref{lem:difTAT}  it follows that
\[
\Lambda{\mathcal A}=\begin{bmatrix}
O(a^{5/2})&O(a^2)&O(a^{5/2})\\
O(a^2)&O(a^{3/2})&O(a)\\
O(a^{5/2})&O(a)&O(a^{5/2})\\
\end{bmatrix}
\]
because $\Lambda{\mathcal A}$ is symmetric and $\Lambda\simeq {\rm diag}(a^2,a,1)$. Taking \eqref{eq:dx:TtT} into account this shows that
\[
\Lambda{\mathcal A}(\dif_xT^{-1})T=\begin{bmatrix}
O(a^2)&O(a^{5/2})&O(a^3)\\
O(a^{3/2})&O(a^2)&O(a^{5/2})\\
O(a)&O(a^{3/2})&O(a^2)\\
\end{bmatrix}.
\]
Thus $|\lr{(\dif_xT^{-1})TV,\Lambda{\mathcal A}V}|$ is bounded by
\begin{align*}
a^2\sum_{j=1}^3|V_j|^2+a^{3/2}|V_1||V_2|+a|V_1||V_3|+a^{3/2}|V_2||V_3|\\
\preceq a^2|V_1|^2+a|V_2|^2+|V_3|^2
\end{align*}
so that $|{\mathsf{Re}}\,\big(e^{-\gamma t}\phi^{-N}\Lambda{\mathcal B}V,V\big)|$ is bounded by \eqref{eq:ene:bis} taking $N$ and $\gamma$ large. 

Therefore to obtain energy estimates it suffices to find pairs $(\phi_j,\omega_{j,\ep})$ of scalar weight $\phi_j$ and subregion $\omega_{j,\ep}$  such that \eqref{eq:katei:Dbis} and \eqref{eq:Ome:to:0} are verified and $\cup\, \omega_{j,\ep}$ covers a neighborhood of $(0,0)$ for any fixed small $\ep>0$.  Since the choice of $(\phi_j,\omega_{j,\ep})$ is exactly same as in \cite{N1} (also \cite{N3}) we only mention how to choose $\phi_j$ and $\omega_{j,\ep}$ ($\phi$ is denoted by $\rho$ in \cite{N1} or \cite{N3}).

Following \cite{N1} one can define a real valued function $\alpha(t,x)$
\begin{equation}
\label{eq:al}
\alpha(t,x)=x^{n}\prod_{i\in I_1}(t-t_i(x))\prod_{i\in I_2}|t-t_i(x)|e(t,x)
\end{equation}
so that $a(t,x)=\alpha(t,x)^2$ where $t_i(x)$ has a  convergent Puiseux expansion in $0<\pm x<\delta$ with small $\delta$ and  ${\mathsf{Im}}\,t_i(x)\neq 0$ if $i\in I_2$. We choose all distinct $t_k(x)$ in \eqref{eq:al} and rename them as $t_1(x),\ldots,t_m(x)$. Taking $\delta$ small one can assume that
\begin{align*}
&{\mathsf{Re}}\,t_{\mu_1}(x)\leq {\mathsf{Re}}\,t_{\mu_2}(x)\leq \cdots \leq {\mathsf{Re}}\,t_{\mu_m}(x),\quad 0<x<\delta,\\
&{\mathsf{Re}}\,t_{\nu_1}(x)\leq {\mathsf{Re}}\,t_{\nu_2}(x)\leq \cdots \leq {\mathsf{Re}}\,t_{\nu_m}(x),\quad -\delta<x<0.
\end{align*}
Define $\sigma_j(x)$ by
\[
\sigma_j(x)={\mathsf{Re}}\,t_{\mu_j}(x)\;\; \text{for}\;\; x>0, \quad \sigma_j(x)={\mathsf{Re}}\,t_{\nu_j}(x)\;\; \text{for}\;\; x<0
\]
so that $\sigma_1(x)\leq \cdots\leq \sigma_m(x)$ in $|x|<\delta$. Define
\[
s_j(x)=\frac{\sigma_j(x)+\sigma_{j+1}(x)}{2},\;1\leq j\leq m-1,\;s_0(x)=-3t^*(x),\;s_m(x)=3t^*(x)
\]
with 
\[
t^*(x)=\big(\sum_{j=1}^m|t_j(x)|^2\big)^{1/2}
\]
where the sum is taken over all distinct $t_i(x)$ in \eqref{eq:al}.  Denote by $\omega_j^{\pm}$ and $\omega(T)$ the  subregions defined by
\begin{align*}
&\omega_j=\{(t,x)\mid  |x|\leq {\bar\delta}(T-t), s_{j-1}(x)\leq t\leq s_j(x)\}\;\; (j=1,\ldots,m),\\
&\omega^{\pm}_j=\omega_j\cap\{t\gtrless \sigma_j\},\;\;\omega(T)=\{(t,x)\mid  |x|\leq {\bar\delta}(T-t), s_m(x)\leq t\leq T\}.
\end{align*}
for small ${\bar\delta}>0$, $T>0$. Here ${\bar \delta}>0$ and $T>0$ play the role of $\ep>0$ in \eqref{eq:Ome:to:0}.

For $\omega_j$, $j=1,\ldots,m$ we take $\phi=\phi_j^{\pm}(t,x)=\pm(t-\sigma_j(x))$. For $\omega(T)$ we take $\phi=\phi_{m+1}=t-s_m(x)$ if $n\geq 1$. Turn to $\omega(T)$ with $n=0$. Without restrictions one can assume $\alpha>0$ and $\dif_t\alpha>0$ in $\omega(T)$ (see \cite[Lemma 2.2]{N1}) and we take $\phi=\alpha(t,x)$. 

\bigskip
\noindent
{\bf Acknowledgment.} This work was supported by 
JSPS KAKENHI Grant Number JP20K03679.


\end{document}